\DeclareMathOperator*{\esssup}{ess\,sup}
\providecommand{\undertilde}[1]{\underaccent{\tilde}{#1}}
\newtheorem{theorem}{Theorem}[section]
\newtheorem{remark}[theorem]{Remark}
\newtheorem{definition}[theorem]{Definition}
\newtheorem{lemma}[theorem]{Lemma}
\newtheorem{proposition}[theorem]{Proposition}
\begin{document}

\begin{frontmatter}

\title{Boundary and Interior Control in a Diffusive Lotka-Volterra Model}

\author[a,c]{João Carlos Barreira}
\ead{jcbarreira95@gmail.com}

\author[b,c]{Maicon Sonego}
\ead{mcn.sonego@unifei.edu.br}

\author[c,d,e]{Enrique Zuazua}
\ead{enrique.zuazua@fau.de}


\address[a]{Instituto de Matemática e Estatística, Universidade Federal Fluminense, 
24210-201 Niterói, Rio de Janeiro, Brazil}

\address[b]{Instituto de Matemática e Computação, Universidade Federal de Itajubá, 
37500-903 Itajubá, Minas Gerais, Brazil}

\address[c]{Chair for Dynamics, Control, Machine Learning, and Numerics (Alexander von Humboldt-Professorship), 
Department of Mathematics, Friedrich-Alexander-Universität Erlangen-Nürnberg, 
91058 Erlangen, Germany}

\address[d]{Departamento de Matemáticas, Universidad Autónoma de Madrid, 
28049 Madrid, Spain}

\address[e]{Chair of Computational Mathematics, Fundación Deusto, 
48007 Bilbao, Basque Country, Spain}

\begin{abstract}

We investigate the controllability of a generalized diffusive Lotka-Volterra competition model for two species, incorporating boundary controls and an interior multiplicative control. Considering a smooth, bounded N-dimensional domain, we analyze ecologically pertinent scenarios characterized by constraints on both the controls and system states. Our results demonstrate how integrated control strategies can effectively overcome the limitations identified in previous studies. We prove two main results: (1) asymptotic controllability to single-species survival steady states under arbitrary system parameters, ensured by a combination of boundary and interior controls which act jointly to stabilize the system; and (2) finite-time controllability to a specific heterogeneous coexistence steady state via a two-phase strategy - first steering the system near the target with boundary control, then activating an interior multiplicative control in a localized region. The strong synergy between the two control mechanisms is crucial in both cases. We also analyze extinction dynamics and homogeneous coexistence, and support our findings with numerical simulations. The work concludes with perspectives for future research.\end{abstract}

\begin{keyword}
Diffusive Lotka-Volterra System; Controllability; Boundary and Interior Controls; Steady States; Carleman Inequality.

 \MSC[2010] Primary: 93B05\sep 93C20\sep Secondary: 35Q92\sep 93C10.

\end{keyword}

\end{frontmatter}
\section{Introduction}


The Lotka-Volterra model (LVM) has found applications across a wide range of disciplines, including economics, technology, marketing, particularly in modeling industrial competition (see \cite{SOLOMON,WANG}) and even in language competition models \cite{KS,Patriarca}.

From a Control Theory perspective, LVMs offer a rich structure for exploration, as even minor modifications -- such as changes in the sign of certain coefficients -- can drastically affect the system's behavior and the nature of achievable control results. In this context, \cite{Bonnard} applied high-dimensional LVMs to control complex microbiotas through the use of probiotics, antibiotics, and combinations of transplants and bactericides. Optimal control techniques were employed to regulate microbial populations and reduce the incidence of infections caused by pathogens. In a related line of work, \cite{Ibañez} formulated an optimal control problem for an LVM capturing a predator-prey interaction, where the control variable represented hunting pressure on both species. The study focused on long-term behavior of optimal state-control trajectories, highlighting the ``turnpike" property -- i.e., the convergence of optimal trajectories toward a nearly steady regime when the time horizon is sufficiently long.

The literature on LVMs and their control is vast and diverse (see, for instance, \cite{DAGBOVIE,FISTER,HAO,PAVEL,YAN}), reflecting the model's relevance and flexibility. Moreover, the applicability of multiplicative controls to reaction-diffusion equations represented a key element in the design of this investigation; see, for instance, \cite{Khapalov2002, Khapalov2003, Lin, Lin2007}, which include results related to both asymptotic and finite-time controllability. In the recent works \cite{SONEGOZUAZUA} and \cite{afz}, the authors investigate the controllability of Lotka–Volterra systems using only boundary controls, considering weak and strong competition models, respectively.

Building upon these foundational results, our attention turns to their implications in real-world biological contexts. Such mathematical tools find natural application in ecological modeling scenarios, where control of population dynamics is of critical importance.

In this regard, the present work is motivated by the need to develop rigorous mathematical results for competitive LVMs, as such models are essential for understanding the ecological dynamics of interacting biological populations. These insights, in turn, are crucial for informed species conservation and ecosystem management. The version of the LVM analyzed in this article captures competition between two species and is particularly well-suited for examining how limited resources drive interspecies interactions. We undertake a detailed analysis of these dynamics, supported by both mathematical arguments and numerical simulations.

\subsection{Problem formulation and main results}\label{Subse_o_1_1}
In this article, we study a general diffusive LVM describing the dynamics of two competing species. The system involves two state variables, $u$ and $v$, representing the respective population densities of the species. It is equipped with boundary conditions and features a multiplicative internal control. In our first result, concerning asymptotic controllability towards the survival of only one of the species, the internal control must act on the entire domain, and thus the problem is formulated as follows
\begin{equation}\label{e1}
\left\{\begin{array}{ll}
u_t=d_1\Delta u+u(a_1-b_1u-c_1v)+hu,& (x,t)\in\Omega\times\mathbb{R}^+\\
v_t=d_2\Delta v+v(a_2-b_2u-c_2v),& (x,t)\in\Omega\times\mathbb{R}^+\\
(u(x,0),v(x,0))=(u_0,v_0),& x\in\Omega\\
u(x,t)=c_u(x,t),\ \ v(x,t)=c_v(x,t),& (x,t)\in\partial\Omega\times\mathbb{R}^+,
\end{array} \right.   
\end{equation}
where $a_i, b_i, c_i, d_i$ $( i = 1, 2) $ are positive parameters, and $ \Omega \subset \mathbb{R}^N$ ($ N = 1, 2, 3 $) is a smooth domain with a regular boundary $\partial\Omega$. 

The term $hu$ represents an interior multiplicative control, $c_u$, $c_v$ are boundary controls, and $u_0$ and $v_0$ represent the initial conditions of $u$ and $v$, respectively. For future reference, given $T > 0$, we denote by $Q$ the cylinder $\Omega \times (0, T)$, with lateral boundary $\Sigma = \partial\Omega \times (0, T)$.

For our second result, which is related to finite-time controllability towards a heterogeneous coexistence state, we consider a nonempty open set \( \omega \subset \Omega \) and modify the multiplicative control in \eqref{e1} to the form \( h 1_{\omega} u \), where \( 1_{\omega} \) denotes the characteristic function of \( \omega \). In this case, the problem is formulated as
\begin{equation}\label{e1_2}
\left\{\begin{array}{ll}
u_t=d_1\Delta u+u(a_1-b_1u-c_1v)+h1_{\omega}u,& (x,t)\in\Omega\times\mathbb{R}^+\\
v_t=d_2\Delta v+v(a_2-b_2u-c_2v),& (x,t)\in\Omega\times\mathbb{R}^+\\
(u(x,0),v(x,0))=(u_0,v_0),& x\in\Omega\\
u(x,t)=c_u(x,t),\ \ v(x,t)=c_v(x,t),& (x,t)\in\partial\Omega\times\mathbb{R}^+.
\end{array} \right.   
\end{equation}

Although the two systems are very similar, differing only in the way the control acts (globally in \eqref{e1} and locally in \eqref{e1_2}) it is convenient to present them separately. This distinction improves the organization of the exposition and contributes to a clearer understanding of the results.

This paper is devoted to analyzing the possibility of steering the system toward equilibrium configurations through the combined action of interior multiplicative and boundary controls. The proposed approach represents a distinctive contribution compared to previous works, which have primarily focused on controllability aspects for reaction-diffusion equations without exploring such a combined control strategy (see, e.g., \cite{afz,Kevin,BALETZUAZUA,SONEGOZUAZUA} and the references contained). As will be detailed below, the constraints considered in this work enhance the model’s relevance for practical applications by providing well-formalized biological interpretations. However, these constraints necessitate the use of regular controls, which in turn increase the analytical complexity of the problem. The adoption of an approach based on combined controls enabled us to obtain results on asymptotic controllability for two steady states -- both representing the survival of a single species -- as well as finite-time controllability for a steady state corresponding to a heterogeneous coexistence.

Note that, for the uncontrolled system, the carrying capacities of $u$ and $v$ are $a_1/b_1$ and $a_2/c_2$, respectively. Therefore, it is natural to impose constraints on the solutions based on these values, i.e.,
\begin{equation}\label{CH}
    0\leq u\leq a_1/b_1,\ \ 0\leq v\leq a_2/c_2.
\end{equation}

This necessitates imposing the same constraints on the boundary controls:
\begin{equation}\label{CUV}
    0\leq c_u\leq a_1/b_1,\ \ 0\leq c_v\leq a_2/c_2.
\end{equation}

Conditions \eqref{CH} and \eqref{CUV} introduce an additional level of difficulty to the problem,  making controllability properties harder to achieve; on the other hand, they reflect realistic physical constraints and represent one of the key features that make the problem both more challenging and more meaningful.

The existence of solutions corresponding to initial data and controls satisfying the bounds in \eqref{CH} and \eqref{CUV} is ensured by the classical theory of monotone methods (see, e.g., \cite{COS}), while regularity properties of the solution can be found in \cite{Amann,Henry}.

We begin by considering the steady state target $(0, a_2/c_2)$, which corresponds to the survival of one species ($v$) at its carrying capacity and the extinction of the other ($u$). In this case, the following theorem ensures the existence of boundary and interior controls -- depending only on the spatial variable -- that asymptotically steer the system's trajectories toward this target in infinite time.
\begin{theorem}\label{MT1}
  There are boundary controls $c_u, c_v\in L^{\infty}(\partial\Omega)$ and an interior control $h\in L^{\infty}(\Omega)$ such that, for every $(u_0,v_0)\in L^{\infty}(\Omega)\times L^{\infty}(\Omega)$, the solution $(u,v)$ of \eqref{e1}  satisfies
    $$\lim_{t\to\infty}(u(\cdot, t),v(\cdot,t))=(0,a_2/c_2)$$ 
    uniformly in $\Omega$.
\end{theorem}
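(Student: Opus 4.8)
The plan is to use time-independent controls that encode the target on the boundary while a strong interior damping extinguishes the first species, and then to analyze the resulting system purely by comparison arguments. Concretely, I would take the boundary controls equal to the boundary traces of the target, $c_u \equiv 0$ and $c_v \equiv a_2/c_2$ (both admissible for \eqref{CUV}), and choose the interior control to be a negative constant $h \equiv -k$ with $k > a_1$. The point of $h$ is that, for $u,v \ge 0$, one has $u(a_1 - k - b_1 u - c_1 v) \le (a_1 - k)u$, so a sufficiently large $k$ makes the net reaction rate of the $u$-equation strictly negative and drives $u$ to extinction, after which the $v$-equation relaxes to its logistic carrying capacity.

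First I would handle the $u$-component. The spatially constant function $\bar u(x,t) = \|u_0\|_{\infty}\,e^{(a_1-k)t}$ solves the $u$-equation with equality (its Laplacian vanishes), dominates $u$ on $\partial\Omega$, where $c_u = 0$, and at $t=0$; hence the parabolic comparison principle gives $0 \le u(x,t) \le \|u_0\|_{\infty}e^{(a_1-k)t}$. As $a_1 - k < 0$, this yields $u(\cdot,t) \to 0$ uniformly in $\bar\Omega$, with a decay rate that can be made arbitrarily fast by enlarging $k$. This disposes of the first coordinate of the target.

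The core of the argument is the uniform convergence $v(\cdot,t) \to a_2/c_2$. The obstruction is that the $v$-equation is coupled to $u$ through $-b_2 u v$ and is therefore non-autonomous; only in the limit does it reduce to the diffusive logistic equation $v_t = d_2\Delta v + v(a_2 - c_2 v)$ with boundary value $a_2/c_2$, for which the constant $a_2/c_2$ is the unique positive steady state and is globally asymptotically stable for nonnegative data, a classical fact I would invoke. For the upper bound I discard $-b_2 u v \le 0$ and compare $v$ from above with the solution of this pure logistic equation issued from data above $v_0$, obtaining $\limsup_t \max_{\bar\Omega} v \le a_2/c_2$. For the lower bound I exploit the decay of $u$: given $\epsilon>0$, pick $T_\epsilon$ with $b_2 u \le \epsilon$ on $[T_\epsilon,\infty)$, so that there $v_t \ge d_2\Delta v + v(a_2 - \epsilon - c_2 v)$, and compare $v$ from below with the globally attracting positive steady state $w_\epsilon$ of $d_2\Delta w + w(a_2 - \epsilon - c_2 w)=0$, $w = a_2/c_2$ on $\partial\Omega$. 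This gives $\liminf_t \min_{\bar\Omega} v \ge \min_{\bar\Omega} w_\epsilon$, and letting $\epsilon \to 0$ with $w_\epsilon \to a_2/c_2$ uniformly closes the estimate.

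I expect the lower bound for $v$ to be the main obstacle. One must establish persistence of the second species --- a strictly positive lower bound for $t$ bounded away from $0$, available from the strong maximum principle and the positive boundary datum --- so that a subsolution can be launched despite the transient competition pressure $-b_2 u v$, and one must match that subsolution's boundary value to $a_2/c_2$ so the comparison converges to the full carrying capacity rather than to a strictly smaller level. Quantitatively controlling the interplay between the (tunable) extinction speed of $u$ and the relaxation of $v$ toward its steady state is the technical heart of the proof.
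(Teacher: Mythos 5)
Your proposal is correct in outline, but it follows a genuinely different route from the paper. The paper fixes the same boundary controls $(c_u,c_v)\equiv(0,a_2/c_2)$ but chooses $h\equiv\sigma$ with $-a_1<\sigma<\lambda_1 d_1-a_1$; it then shows, by testing the stationary $u$-equation against the positive first Dirichlet eigenfunction, that $(0,a_2/c_2)$ is the \emph{unique} steady state of the controlled system (the $v$-part via the substitution $w=a_2-c_2v$ and an energy identity), and concludes by invoking the convergence theorem of Smillie \cite{JS}, which guarantees that every bounded solution of such a competitive system converges to a steady state. You instead take $h\equiv-k$ with $k>a_1$ and argue dynamically: an explicit exponential supersolution kills $u$, and a two-sided sub/supersolution squeeze, using the $\epsilon$-perturbed diffusive logistic steady states $w_\epsilon$ (which satisfy $(a_2-\epsilon)/c_2\le w_\epsilon\le a_2/c_2$, so they converge uniformly to $a_2/c_2$), recovers $v\to a_2/c_2$. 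Both arguments are sound. The paper's approach is shorter modulo the \cite{JS} black box and is more economical in the control: its condition on $\sigma$ is exactly what the eigenfunction argument needs, and it degenerates to $\sigma=0$ (no interior control at all) when $d_1>a_1/\lambda_1$, whereas your choice always demands a control of amplitude exceeding $a_1$. In exchange, your argument is self-contained (it needs only scalar parabolic comparison and the classical theory of the diffusive logistic equation, not the convergence theory for coupled competitive systems), and it yields an explicit, tunable exponential extinction rate for $u$. The points you flag as delicate --- positivity of $\min_{\overline\Omega}v(\cdot,T_\epsilon)$ via the strong maximum principle and the positive boundary datum, and launching a stationary subsolution $\delta w_\epsilon$ below it --- are exactly the right ones and do go through; just make sure to state explicitly that the scalar comparisons for $u$ and $v$ treat the other component as a given nonnegative coefficient, which is what legitimizes decoupling the system.
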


Although constant boundary controls are the most natural way to drive a system to a steady state, their effectiveness strongly depends on the appropriate choice of parameters and the structure of the system. Theorem \ref{MT1} shows that the introduction of an internal multiplicative control of the form $(hu)$, acting on the entire $\Omega$, combined with suitable boundary controls, ensures asymptotic stabilization to the target state $(0, a_2/c_2)$, regardless of the parameter values.
This highlights the importance of internal multiplicative control: by influencing the internal dynamics of the system in a spatially distributed manner, it offers greater efficiency, especially in scenarios where constant boundary controls or additive controls are insufficient to achieve stabilization.

By symmetry, an analogous result to Theorem \ref{MT1} holds for the target $\left({a_1}/{b_1}, 0 \right)$, provided that the internal control (now denoted by $\bar{h}$) acts solely on the second equation of the system. In other words,
\begin{equation}\label{e2s}
\left\{\begin{array}{ll}
u_t=d_1\Delta u+u(a_1-b_1u-c_1v),& (x,t)\in\Omega\times\mathbb{R}^+\\
v_t=d_2\Delta v+v(a_2-b_2u-c_2v)+\bar{h}v,& (x,t)\in\Omega\times\mathbb{R}^+\\
(u(x,0),v(x,0))=(u_0,v_0),& x\in\Omega\\
u(x,t)=c_u(x,t),\ \ v(x,t)=c_v(x,t),& (x,t)\in\partial\Omega\times\mathbb{R}^+.
\end{array} \right.   
\end{equation}

\begin{remark}
Several remarks are in order.
\begin{itemize}    
 \item The demonstration proceeds according to the methodology described below.
\begin{enumerate}
 \item We begin by considering constant boundary controls given by 
\((c_u(x), c_v(x)) \equiv (0, a_2/{c_2})\), for all \(x \in \partial\Omega\), and the internal control chosen so as to stabilize the system. For this, we prove that $(0, a_2/c_2)$ is the unique steady state of the system corresponding to our control problem. 

\item The result follows from a convergence theorem established in \cite{JS}, which guarantees that the solution of the controlled system asymptotically approaches to a steady state.
\end{enumerate}

\item 	Observe that the controls employed are time-independent. Once applied, it is the system's intrinsic dynamics that gradually steer the state toward the desired target.

\item Note that the target is reached only asymptotically as $t \to \infty$. In the present setting, exact controllability in finite-time cannot be expected, since one component of the target steady state saturates the imposed constraints.

In general, achieving controllability in finite-time would require the system's trajectories to oscillate around the target, which would inherently lead to violations of the constraints. This type of obstruction is well known in the context of scalar diffusion models. Whether such an obstruction is also unavoidable for finite-time controllability in the current coupled system remains an interesting open question.

 \item  If the internal control were defined on an open subset $\widetilde{\omega} \subset \Omega$, assumptions regarding the parameters would be necessary; however, this was not the focus for this target. As previously described, in our next result, focusing on a different target, we consider internal control on an open subset of $\Omega$ and identify the assumptions regarding the parameters that will be required. 
\end{itemize} 
\end{remark}

As mentioned previously, internal multiplicative control plays a key role in stabilizing the system. More than that, in this context, this control mechanism has proven to be crucial in preventing the emergence of barrier functions, which could obstruct the system's trajectories and prevent it from reaching a desired target. As discussed in \cite{SONEGOZUAZUA}, when considering only boundary controls within a specific weak competition model -- namely, when in our system \eqref{e1} the parameters are set as $c_1, b_2 < 1, a_1 = b_1 = 1$, and $c_2 = 1$ -- a barrier effect can arise under certain conditions. Specifically, if $b_2 < a_2 < 1/c_1$ and $\lambda_1 < \min\{ (1 - a_2 c_1)/d_1,\ (a_2 - b_2)/d_2\} $, it is possible to construct barrier functions that prevent the system's trajectories from reaching the desired targets, regardless of the chosen boundary controls even when these vary in both space and time.

In such cases, internal control becomes essential. The presence of a multiplicative internal control, acting as a spatially distributed potential, allows the system to overcome the limitations imposed by these barriers and to steer gradually toward the target. For instance, when targeting the steady state $(0, a_2/c_2)$, the internal control $h$ directly affects the growth rate of the species $u$ across the entire domain. By sufficiently reducing this growth rate, the control can drive the population of $u$ toward extinction. Thus, the availability of internal multiplicative control eliminates the obstruction created by barrier functions and enables successful stabilization.

Naturally, a symmetric analysis can be carried out for the alternative target configuration.

Observe that, although our approach in this case requires the internal control to act over the entire domain, it guarantees asymptotic stabilization regardless of the parameter values. This is achieved through a multiplicative internal control, which can be interpreted as a particularly simple feedback control mechanism. Additionally, the proposed control strategy is independent of the initial conditions, further highlighting its robustness.

That said, it may be possible to localize the control action to smaller subregions of the domain by employing more sophisticated feedback control techniques (see, for example, \cite{BV} and the references therein), or by imposing additional structural assumptions on the problem, as discussed in \cite{A1,ACD}. This is an interesting open problem that deserves further consideration.

We now turn our attention to the second target, which represents a heterogeneous coexistence state in the specific scenario where $d_1 = d_2 = d$ and $a_1 = a_2 = a$,
\begin{equation}\label{uev}
(u^{**},v^{**})=(u^{**}(x),v^{**}(x))=\left(\left(\dfrac{c_2-c_1}{b_1c_2-c_1b_2}\right)\theta(x),\left(\dfrac{b_1-b_2}{b_1c_2-c_1b_2}\right)\theta(x)\right)
\end{equation}
where $\theta=\theta(x)$ is a smooth function that satisfies
\begin{equation}\label{ET}
\left\{\begin{array}{l}
d\Delta\theta+\theta(a- \theta)=0,\ \  x\in \Omega \\
\theta=0,\ \ x\in\partial\Omega\\
0<\theta<1,\ \  x\in \Omega.
\end{array}\right.
\end{equation}
In this case, we also assume \( b_1 > b_2 \), \( c_1 < c_2 \) to ensure the positivity of \( u^{**} \) and \( v^{**} \) in $\Omega$.

Before stating our next theorem, we need to consider the following eigenvalue problem
\begin{equation}\label{LA}
\left\{\begin{array}{ll}
-\Delta\phi= \lambda\phi,& x\in\Omega\\
\phi=0,& x\in\partial\Omega.
\end{array}\right.
\end{equation}

We denote by $\lambda_1$ the smallest eigenvalue of \eqref{LA}, which is well known to satisfy $\lambda_1 > 0$.
More details about the eigenvalue \( \lambda_1 \) will be discussed at the end of the Section \ref{Sec2}.

We are able to state our second result, which provides the main contribution of this article.
\begin{theorem}\label{MT3}
  Let any nonempty open set $\omega\subset\Omega$, if
     \begin{equation}\label{h12}
d<{a}/{\lambda_1}
     \end{equation}
    then for every $(u_0,v_0)\in L^{\infty}(\Omega)\times L^{\infty}(\Omega)$ there are boundary controls $c_u, c_v\in L^{\infty}(\partial\Omega)$ and an interior control $h\in L^{\infty}(\omega\times (0,T))$, such that the solution associated $(u,v)$ of \eqref{e1_2} satisfies
    \begin{equation}\label{exact_control}
        (u(\cdot, T),v(\cdot,T))=(u^{**},v^{**}),
     \end{equation}
    for some $T>0$.
\end{theorem}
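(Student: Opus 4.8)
The plan is to follow the two-phase strategy announced in the introduction, exploiting the fact that $(u^{**},v^{**})$ from \eqref{uev} is itself a steady state of the \emph{uncontrolled} system ($h\equiv 0$) under homogeneous Dirichlet boundary conditions. Indeed, writing $\alpha=(c_2-c_1)/(b_1c_2-c_1b_2)$ and $\beta=(b_1-b_2)/(b_1c_2-c_1b_2)$, one checks the identities $b_1\alpha+c_1\beta=b_2\alpha+c_2\beta=1$, so that substituting $u^{**}=\alpha\theta$, $v^{**}=\beta\theta$ and using \eqref{ET} gives $d\Delta u^{**}+u^{**}(a-b_1u^{**}-c_1v^{**})=0$, and likewise for $v^{**}$. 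The assumptions $b_1>b_2$, $c_1<c_2$ guarantee $\alpha,\beta>0$, while \eqref{h12} is exactly the condition under which \eqref{ET} admits a positive solution; hence the target is well defined and lies in the interior of the constraint box \eqref{CH}. A preliminary reduction: since controllability from a smaller control region implies controllability from a larger one, I may assume $\overline{\omega}\subset\Omega$, so that $u^{**}=\alpha\theta\ge\delta>0$ and $v^{**}=\beta\theta\ge\delta>0$ on $\omega$.

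\emph{Phase 1 (boundary control, approximate stage).} First I would use the boundary controls alone, with $h\equiv0$, to steer an arbitrary $(u_0,v_0)$ into a prescribed neighborhood of $(u^{**},v^{**})$ in some finite time $T_1$. The delicate point is that for degenerate data (e.g.\ $u_0\equiv0$) the homogeneous Dirichlet flow keeps that component extinct; a short initial step with strictly positive boundary data $c_u,c_v$ makes both components strictly positive in $\Omega$ by the strong maximum principle, after which I switch to $c_u=c_v=0$. With homogeneous Dirichlet data and strictly positive, constraint-admissible states, the weak-competition structure ($b_1c_2-c_1b_2>0$) together with \eqref{h12} places us in the regime where $(u^{**},v^{**})$ is the unique positive steady state and is globally asymptotically stable; invoking a convergence result in the spirit of the one used for Theorem \ref{MT1}, the trajectory converges uniformly to $(u^{**},v^{**})$, so for any $\varepsilon>0$ there is $T_1$ with $\|(u(\cdot,T_1),v(\cdot,T_1))-(u^{**},v^{**})\|_{\infty}<\varepsilon$. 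Throughout, the bounds \eqref{CH}--\eqref{CUV} are preserved by monotone methods.

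\emph{Phase 2 (interior multiplicative control, exact stage).} Keeping $c_u=c_v=0$, I would now activate $h1_\omega u$ to reach the target exactly at $T=T_1+T_2$. Writing $u=u^{**}+y$, $v=v^{**}+z$, the pair $(y,z)$ solves a $2\times2$ reaction--diffusion system whose linear part is coupled through the terms $-c_1u^{**}z$ and $-b_2v^{**}y$, plus quadratic remainders, with a single scalar control entering only the $y$-equation. Because $u^{**}\ge\delta>0$ on $\omega$, the multiplicative control is as powerful as a localized additive one: choosing $h=f/(u^{**}+y)$ realizes any interior source $f1_\omega$ as long as $u$ stays positive on $\omega$, which holds near the target. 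The problem thus reduces to local exact controllability to the trajectory $(u^{**},v^{**})$, i.e.\ null-controllability for $(y,z)$ of a coupled system with one control. I would establish null-controllability of the linearized system through a global Carleman estimate for its adjoint, in which observability of the uncontrolled $z$-component is recovered from the $y$-component via the coupling coefficient $b_2v^{**}=b_2\beta\theta$, bounded below on $\omega$; this is precisely the nondegeneracy required for indirect controllability of cascade/coupled parabolic systems. The nonlinear statement then follows by a fixed-point (or inverse-mapping) argument, whose smallness hypothesis is met because Phase 1 delivered data $\varepsilon$-close to the target; since we remain in a small neighborhood of the interior state, the constraints \eqref{CH}--\eqref{CUV} are inactive and pose no obstruction.

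The hard part will be Phase 2: proving the Carleman/observability inequality for the coupled adjoint with a single observation and transmitting it through the coupling to control the uncontrolled component, while simultaneously accommodating the multiplicative (rather than additive) nature of the control and ensuring the fixed-point argument closes within the positivity region where $h=f/u$ is admissible. A secondary difficulty is the global convergence claim underlying Phase 1 and the clean gluing of the two phases, so that the constraint bounds hold on the whole interval $[0,T]$.
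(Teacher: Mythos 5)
Your proposal is correct and follows essentially the same two-phase route as the paper: an asymptotic stage with $h\equiv 0$ and zero Dirichlet data converging to the unique coexistence steady state, followed by local exact controllability to the trajectory via a Carleman estimate for the coupled adjoint (with observability transmitted through $b_2v^{**}>0$ on the control region), the reduction of the multiplicative control to an additive one using the positivity of $u^{**}$ there, and an inverse-mapping/fixed-point conclusion. Your extra preliminary step of briefly applying positive boundary data to handle degenerate initial states such as $u_0\equiv 0$ is a sensible refinement that the paper's Step 1 does not make explicit, but otherwise the arguments coincide.
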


Theorem \ref{MT3} states that the target $(u^{**},v^{**})$ can be exactly reached in finite-time when condition \eqref{h12} holds; its biological significance is examined in Section \ref{Sec4}. This inequality, which connects the geometry of the domain $\Omega$ (through the parameter $\lambda_1$), the diffusion capacity, and the reproduction rate of the species, is essential for the asymptotic controllability to the target -- representing the first stage in our control strategy. The second stage of our strategy involves applying finite-time controllability arguments for trajectories, using the stationary state $(u^{**}, v^{**})$ as the target trajectory. The system \eqref{e1_2} is reformulated based on this state, yielding a new nonlinear formulation in terms of the variables $(y, z) = (u - u^{**}, v - v^{**})$, and an associated linear system (see \eqref{linear_1}) with additive control of the form $\widetilde{h} 1_{\omega}$. Therefore, we will assume a nonempty open subset $\omega_{0} \Subset \omega$, since $\widetilde{h} 1_{\omega}=h1_{\omega}u^{**}$ and  $u^{**} > 0$ in $\overline{\omega_{0}}$. This assumption enables the application of Carleman estimates to the adjoint system associated with the linear equation, which in turn allows us to establish the global null controllability of the linear system. The details of this formulation are presented in Section \ref{Sec3}. Through the global null controllability of the linear system and estimates obtained for the state and control, we apply an invertibility theorem to establish the local null controllability of the nonlinear system of solution $(y, z)$, which in turn allows us to conclude finite-time local controllability for the system \eqref{e1_2} to the target $(u^{**}, v^{**})$. The combination of these two stages ultimately leads to the proof of Theorem \ref{MT3}.

In conclusion, Theorem \ref{MT3} combines an asymptotic approach with a local controllability argument around the steady state, resulting in a complete strategy to drive the system exactly to the target $(u^{**}, v^{**})$ in finite-time.

Note that it may be possible to reach the target in finite-time using only boundary controls. However, this is expected to be a challenging task due to constraints \eqref{CUV} and the fact that the target vanishes on the boundary.

\begin{remark}The following comments provide further insight into the result:
\begin{itemize}

\item The finite-time controllability result of Theorem \ref{MT3} is especially relevant because, although we have $0 < u^{**} < a_1/b_1$ and $0 < v^{**} < a_2/c_2$ for $x \in \Omega$, the target satisfies $u^{**} = v^{**} = 0$ over the boundary, and therefore saturates the constraints. This boundary behavior could, in principle, lead to constraint violations due to trajectory oscillations typically required to ensure finite-time controllability. However, our strategy successfully avoids this issue.

\item The key to this success lies in the use of a multiplicative interior control. Specifically, given any initial condition satisfying \eqref{CH}, we begin by setting $ h \equiv 0 $  and $ c_u \equiv c_v \equiv 0 $, allowing the natural system dynamics to steer the solution asymptotically toward the target. This occurs due to the stability properties of the system, in which hypothesis \eqref{h12} plays a fundamental role.  Once the state is sufficiently close to the target, we activate the interior control $ h $ and apply a finite-time local controllability argument to drive the system exactly to the target configuration (see Figure \ref{des}). 

\item Two features are fundamental to our control strategy:
\begin{enumerate}
    \item The solution components $ u $ and $ v $ remain zero on the boundary throughout the evolution, ensuring that the constraint \eqref{CUV} is never violated.
    \item The control $ h 1_\omega u $ is activated only when the trajectory is sufficiently close to the target, and thanks to the $ L^\infty $-bounds on $ h $, we can ensure it remains sufficiently small. This permits the use of comparison principles to verify the constraints imposed by \eqref{CH}.
    \end{enumerate}
    
\item Once again, the boundary controls \(c_u\) and \(c_v\) are time-independent; however, now the interior control $h$ is time-dependent.
\item As described in \eqref{e1_2}, the internal multiplicative control \( h1_{\omega} u\) acts on the first equation of the system; however, the implemented approach can be analogously adapted to the case where the control acts solely on the equation for \( v \), which would take the form \( \ddot{h}1_{\omega} v \).

\end{itemize}
\end{remark}

It is important to emphasize that both of the results described earlier rely on boundary controls as well as a multiplicative internal control. Finally, note that the targets related to the survival of a single species are much more delicate, since they require the internal control to act on the entire domain and, even so, we only achieve an asymptotic approximation to the target (Theorem \ref{MT1}). This difficulty is due to the assumed constraints. In contrast, for the target corresponding to the heterogeneous coexistence of species, although the states vanish on the boundary, we obtain controllability in finite-time using an internal control localized in the interior of the domain (Theorem \ref{MT3}).

\subsection{Biological interpretation and control}

Biologically, the interior multiplicative control $hu$ (or $h1_{\omega} u)$ represents an external agent that directly influences the reproduction rate of species $u$ within a region $\Omega$ (or subregion $\omega$) of the habitat. Through the diffusion term in the model, the effect of this local control naturally propagates throughout the entire domain. When $h > 0$, the control may correspond to a resource supplement that enhances the survival or reproductive success of $u$, or to genetic or environmental factors that promote its population growth. Conversely, when $h < 0$, the control reduces the growth rate of $u$, potentially modeling the impact of a pesticide, disease, or other harmful factor selectively affecting this species.

Importantly, in view of the biological interpretation adopted in this work, and in addition to the constraints imposed on the boundary controls and the states, it is essential to ensure that $h$ remains bounded. Classical control problems typically provide $L^2$-regularity for the control function, which is insufficient for the present setting. Therefore, the $L^{\infty}$ estimates for $h$ obtained in this study are fundamental. The techniques developed to achieve these bounds are sufficiently general and may be of independent interest beyond the context of this work.

The coupling structure of the system ensures that a control acting only on the first equation indirectly influences the second species. This relation can be succinctly expressed as:
\[
h \leadsto u \leadsto v.
\]

On the other hand, the boundary controls $c_u$ and $c_v$ act directly on the boundary population densities of $u$ and $v$, respectively. These controls admit various interpretations, such as the enforced migration of individuals at the habitat boundaries in a population management context, or the implementation of biological control measures like predator introduction or competitor removal at the edge of the habitat.

Although the analysis in this paper focuses on Dirichlet boundary controls, the methodology and conclusions naturally extend to the case of Neumann controls. In particular, once suitable Dirichlet controls guiding the system toward the desired state are identified, the associated boundary fluxes may be interpreted as Neumann controls. This duality is typical in problems involving full-boundary actuation and offers alternative interpretations relevant for practical scenarios such as flux regulation, filtration, or migration dynamics at domain boundaries.

\subsection{Outline}
In Section \ref{Sec2}, we prove Theorem \ref{MT1} and provide a detailed analysis to contextualize its significance. Section \ref{Sec3} is dedicated to the classical theory of controllability: we introduce the relevant Carleman inequalities, establish a null controllability result for a linearized system tailored to our framework, and present a local inversion theorem that will be instrumental in deriving Theorem \ref{MT3}.

In Section~\ref{Sec4}, we present the proof of Theorem~\ref{MT3}. As in Theorem \ref{MT1}, we also provide additional remarks aimed at clarifying the interpretation and broader implications of the result. 

In Section \ref{HCE}, we analyze the particular targets $(0,0)$ -- representing total extinction -- and
\begin{equation*}
(u^{*}, v^{*})=\left(\dfrac{a_1c_2-a_2c_1}{b_1c_2-b_2c_1},\dfrac{a_2b_1-a_1b_2}{b_1c_2-b_2c_1}\right),
\end{equation*}
which corresponds to a state of homogeneous coexistence (under conditions ensuring that $0 < u^{*} < a_1/b_1$ and $0 < v^{*} < a_2/c_2)$. Although we do not introduce new results in this section, the analysis of these targets is key to a deeper understanding of the overall control problem and helps to identify promising directions for future research. For example, driving the system to extinction $(0,0)$ by using internal control in only one equation appears to be a nontrivial task, regardless of the problem's coefficients. On the other hand, as shown in \cite{SONEGOZUAZUA}, the coexistence target $(u^{*}, v^{*})$ can be reached exactly in finite-time using boundary controls alone, without activating any internal controls.

Section \ref{NS} contains numerical simulations designed to illustrate and emphasize the impact of the multiplicative internal control. Finally, the concluding section summarizes the main contributions and outlines several open problems for further investigation.

\section{Proof of Theorem \ref{MT1}}\label{Sec2}
This section is dedicated to the proof of Theorem \ref{MT1}. In the interest of clarity and objectivity, we have chosen to present the proof in a simplified form. The more technical aspects and detailed justifications are addressed right after the proof, where the effects of activating the internal control in the \eqref{e1} system (and also in the \eqref{e2s} system) are discussed in detail. In particular, we demonstrate how the internal control enables trajectories to surmount potential barrier functions that may arise when relying exclusively on boundary controls, as evidenced in \cite{SONEGOZUAZUA}.

Before presenting the proof of Theorem \ref{MT1}, we state the following existence comparison theorem, which is an adaptation of Theorem 2.3 in \cite{pao}, and will be essential for understanding our result.
\begin{theorem}\label{cos}
Let $(\widetilde{u},\widetilde{v})$, $(\undertilde{u},\undertilde{v})$ be a pair of smooth functions such that $\widetilde{u}\geq\undertilde{u}\geq 0$ and $\widetilde{v}\geq\undertilde{v}\geq 0$. Moreover, suppose that $(\widetilde{u},\undertilde{v})$ satisfies 
\begin{equation}\label{SPEX}
\left\{\begin{array}{ll}
\widetilde{u}_t\geq d_1\widetilde{u}_{xx}+\widetilde{u}(a_1-b_1\widetilde{u}-c_1\undertilde{v}),& (x,t)\in (0,L)\times\mathbb{R}^+\\
\undertilde{v}_t\leq d_2\undertilde{v}_{xx}+\undertilde{v}(a_2-b_{2}\widetilde{u}-c_{2}\undertilde{v}),& (x,t)\in (0,L)\times\mathbb{R}^+\\
\widetilde{u}(x,0)\geq u_0(x),\ \ \undertilde{v}(x,0)\leq v_0(x),& x\in (0,L)\\
\widetilde{u}(x,t)\geq 0, \ \ \undertilde{v}(x,t)\leq 0,& (x,t)\in\{0,L\}\times\mathbb{R}^+,
\end{array}\right.
\end{equation}
and that $(\undertilde{u},\widetilde{v})$ satisfies
the corresponding reversed inequalities. Then the problem \eqref{e1} (with $h\equiv 0$) under zero Dirichlet boundary conditions has a
unique solution $(u, v)$ such that
$$\undertilde{u}(x,t)\leq u(x,t)\leq \widetilde{u}(x,t),\ \ \undertilde{v}(x,t)\leq v(x,t)\leq\widetilde{v}(x,t),$$
for $(x,t)\in [0,L]\times\mathbb{R}^+$.
\end{theorem}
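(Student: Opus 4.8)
The plan is to prove this as a coupled upper--lower solution result via monotone iteration, following the framework of \cite{pao} and adapting it to the competitive Lotka--Volterra nonlinearity with the prescribed data. The first observation is structural: writing $f_1(u,v)=u(a_1-b_1u-c_1v)$ and $f_2(u,v)=v(a_2-b_2u-c_2v)$, one has $\partial_v f_1=-c_1u\le 0$ and $\partial_u f_2=-b_2v\le 0$ on the sector
$$\mathcal{S}=\{(u,v):\undertilde{u}\le u\le\widetilde{u},\ \undertilde{v}\le v\le\widetilde{v}\},$$
so the system is quasimonotone nonincreasing. This is precisely why the hypotheses in \eqref{SPEX} pair the upper solution $\widetilde{u}$ with the lower solution $\undertilde{v}$ (and symmetrically $\undertilde{u}$ with $\widetilde{v}$): in a competition system each species' supersolution inequality must be driven by the opponent's subsolution.

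Next I would set up the monotone scheme. Since $f_1,f_2$ are smooth, they are Lipschitz on the bounded sector $\mathcal{S}$, so there is a constant $K>0$ for which $u\mapsto Ku+f_1(u,v)$ and $v\mapsto Kv+f_2(u,v)$ are nondecreasing throughout $\mathcal{S}$. Starting from $(\overline{u}^{(0)},\overline{v}^{(0)})=(\widetilde{u},\widetilde{v})$ and $(\underline{u}^{(0)},\underline{v}^{(0)})=(\undertilde{u},\undertilde{v})$, define the iterates as solutions of the decoupled linear parabolic problems
$$\partial_t\overline{u}^{(n)}-d_1\Delta\overline{u}^{(n)}+K\overline{u}^{(n)}=K\overline{u}^{(n-1)}+f_1(\overline{u}^{(n-1)},\underline{v}^{(n-1)}),$$
together with the analogous equations for $\underline{u}^{(n)},\overline{v}^{(n)},\underline{v}^{(n)}$, using the nonincreasing coupling (upper $u$ with lower $v$, and so on) and the given initial and zero Dirichlet data. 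Each step is a scalar linear parabolic equation with a fixed right-hand side, hence uniquely solvable and as regular as the data by standard theory. The core of the argument is then monotonicity and convergence, proved by the scalar maximum principle: by induction the upper sequences are nonincreasing, the lower sequences nondecreasing, and the ordering $\undertilde{u}\le\underline{u}^{(n)}\le\overline{u}^{(n)}\le\widetilde{u}$ (and likewise for $v$) is preserved, so all iterates remain in $\mathcal{S}$. Bounded monotone sequences converge pointwise, and parabolic $L^p$ and Schauder estimates upgrade this to convergence strong enough that the four limits are classical solutions of \eqref{e1} with $h\equiv0$ lying in $[\undertilde{u},\widetilde{u}]\times[\undertilde{v},\widetilde{v}]$, which already yields the claimed enclosure.

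The step I expect to be the main obstacle is \emph{uniqueness}, i.e.\ showing the maximal solution $(\overline{u},\overline{v})$ and the minimal solution $(\underline{u},\underline{v})$ coincide. Because the coupling is competitive rather than cooperative, one cannot compare two solutions directly by the maximum principle, so I would instead use an energy/Gronwall argument: given two solutions in the sector sharing the same initial and boundary data, subtract the equations, test with the differences $u_1-u_2$ and $v_1-v_2$, and exploit the Lipschitz bound on $f_1,f_2$ over the bounded sector to obtain
$$\frac{d}{dt}\Big(\|u_1-u_2\|_{L^2}^2+\|v_1-v_2\|_{L^2}^2\Big)\le C\Big(\|u_1-u_2\|_{L^2}^2+\|v_1-v_2\|_{L^2}^2\Big).$$
Since the initial difference vanishes, Gronwall's inequality forces the two solutions to agree, collapsing the sector to a single solution and completing the proof. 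The boundedness of $\mathcal{S}$ and the smoothness of the reaction terms are exactly what make both the iteration well-posed and this final estimate available.
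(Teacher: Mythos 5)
Your proposal is correct and reproduces exactly the argument behind the result the paper invokes: the paper gives no proof of Theorem \ref{cos}, citing it as an adaptation of Theorem 2.3 of \cite{pao}, and Pao's proof is precisely the monotone iteration for a quasimonotone nonincreasing system from coupled upper--lower solutions, with the maximal and minimal limit pairs identified (and uniqueness obtained) via the Lipschitz--Gronwall estimate on the bounded sector. Your pairing of $\widetilde{u}$ with $\undertilde{v}$, the choice of the shift constant $K$, and the final energy argument all match the cited scheme.
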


\begin{proof}[Proof of Theorem \ref{MT1}]
 We detail how to achieve the asymptotic stabilization of the trajectory using the following controls 
\begin{equation}\label{C1}(c_u(x),c_v(x))\equiv(0,a_2/c_2)\,\,\mbox{ for $x\in\partial\Omega$, and } h(x)\equiv \sigma \,\,\mbox{for $x\in\Omega$}
\end{equation}
where $\sigma$ is a constant such that
\begin{equation}\label{sig1}
-a_1<\sigma<\lambda_1d_1-a_1.
    \end{equation}
    In fact, $h(x)$ can be any function satisfying \eqref{sig1}; here, we consider the constant $\sigma$ only for simplicity. 

Note that, if $d_1 > a_1/\lambda_1$ holds, then it is not necessary to activate the interior control to achieve asymptotic  stabilization. In this case, it suffices to consider $\sigma \equiv 0$ in \eqref{sig1}. 
Otherwise, we necessarily have $\sigma$ as a negative value and greater than $-a_1$. It follows that interference in the system through control modifies the carrying capacity of species $u$, which then becomes $a_1 + \sigma$.
Therefore, applying Theorem \ref{cos}, we obtain that the trajectories satisfy  
$$0 \leq u(x,t) \leq (a_1+\sigma)/b_1 < a_1/b_1, \quad 0 \leq v(x,t) \leq a_2/c_2.$$
  
Now, we will prove that $(0,a_2/c_2)$ is the unique steady state of 

\begin{equation}\label{e1p}
\left\{\begin{array}{ll}
u_t=d_1\Delta u+u(a_1-b_1u-c_1v)+\sigma u,& (x,t)\in\Omega\times\mathbb{R}^+\\
v_t=d_2\Delta v+v(a_2-b_2u-c_2v),& (x,t)\in\Omega\times\mathbb{R}^+\\
(u(x,0),v(x,0))=(u_0,v_0),& x\in\Omega\\
u(x,t)=0,\ \ v(x,t)=a_2/c_2,& (x,t)\in\partial\Omega\times\mathbb{R}^+.
\end{array} \right.   
\end{equation}

Let $(u^s,v^s)$ be a nonnegative solution of 
\begin{equation}\label{SPS}
\left\{\begin{array}{ll}
d_1\Delta u+u(a_1-b_1u-c_1v)+\sigma u=0,& x\in \Omega\\
d_2\Delta v+v(a_2-b_2v-c_2u)=0,& x\in \Omega\\
u(x,t)=0,\ \ v(x,t)=a_2/c_2,& x\in\partial\Omega.
\end{array}\right.
\end{equation}

By multiplying the first equation in \eqref{SPS} by eigenfunction $\phi(x)$ (which is positive in $\Omega$)  associated to the $\lambda_1$ (see \eqref{LA}), integrating over $\Omega$ and using Green's theorem twice together with the boundary condition on $u$, we obtain
$$-\displaystyle\int_{\Omega}d_1\Delta\phi u^sdx=\int_{\Omega}\phi u^s(a_1+\sigma-b_1u^s-c_1v^s)dx.$$
By \eqref{LA} and the non-negativity of $u^s$ and $v^s$,
$$\displaystyle\int_{\Omega}\phi u^s\left(d_1\lambda_1-a_1-\sigma\right)dx\leq -\int_{\Omega}b_1\phi(u^s)^2dx.$$
By \eqref{sig1}, we conclude that $u^s\equiv 0$. 

Now, we have the following problem
\begin{equation*}\label{SPSu}
\left\{\begin{array}{ll}
d_2\Delta v+v(a_2-c_2v)=0& x\in \Omega\\
v(x)=a_2/c_2,&x\in\partial\Omega,
\end{array}\right.
\end{equation*}
and we state that $v^s\equiv a_2/c_2$ is the unique solution. Indeed, if we take $w=a_2-c_2v^s$, then
\begin{equation*}\label{SPSw}
\left\{\begin{array}{ll}
-d_2\Delta w+w(a_2-w)/c_2=0& x\in \Omega\\
w(x)=0,& x\in\partial\Omega.
\end{array}\right.
\end{equation*}
By multiplying the equation by $w$ and integrating on $\Omega$,
we get
\begin{equation}\label{SER}
\int_{\Omega} d_2\vert\nabla w\vert^2dx=-\int_{\Omega} w^2(a_2-w)/c_2dx.
\end{equation}
Since $w(x)=0$ on $\partial\Omega$ and $w\leq a_2$, we conclude that  \eqref{SER} is true only if $w\equiv 0$. It follows that $v^s\equiv a_2/c_2$ and $(u^s,v^s)\equiv (0,a_2/c_2)$ is the unique stead state of \eqref{e1p}. By the main result of 
\cite{JS}, every bounded solution of \eqref{e1p} converge to a steady state, i.e.
\begin{equation*}\label{AB}
\displaystyle\lim_{t\to\infty}(u(x,t),v(x,t))=(0,a_2/c_2)
\end{equation*}
uniformly in $\Omega$, where $(u,v)$ is the solution of \eqref{e1} with controls given by \eqref{C1}.
 The theorem is proved.
\end{proof}

\begin{remark}Important remarks on Theorem \ref{MT1} are presented in the appropriate order.
\begin{itemize}
  
\item Theorem \ref{MT1} shows that when considering internal control, no assumptions about the parameters are needed for the targets to be asymptotically reached. Therefore, there are no barrier functions preventing trajectories from approaching the target $(0, a_2/c_2)$.

\item It is important to highlight that the action of the internal control can help trajectories overcome potential barrier functions that may arise when considering only boundary controls. These barrier functions prevent the trajectories from reaching their target, and in this sense, the internal control plays a fundamental role. For example, in \cite{SONEGOZUAZUA}, the authors considered a Lotka-Volterra problem under a weak competition regime, specifically: $a_1, b_1, c_2 = 1$, and $0 < c_1, b_2 < 1$. Taking into account only boundary controls, it was proven that if  $b_2<a<1/c_1$ and
$$\lambda_1<\min\left\{{(1-a_2c_1)}/{d_1},{(a_2-b_2)}/{d_2}\right\}$$  
then there exists  barriers functions that prevent certain trajectories from approaching the targets $(0, a_2)$ or $(1,0)$. In particular, for the target $(1,0)$, a barrier function is a non-trivial solution of the following problem
\begin{equation}\label{SPN}
\left\{\begin{array}{ll}
d_1\Delta u+u(1-u-c_1v)=0,& x\in \Omega,\\
d_2\Delta v+v(a_2-v-b_2u)=0,& x\in \Omega,\\
u(x)=1,\ \ v(x)=0,& x\in\partial\Omega.
\end{array}\right.
\end{equation}

\item In a biological context, we can interpret the inequality 
 \begin{equation}\label{ri}d_1 > a_1/\lambda_1\end{equation} as follows:
 \begin{enumerate}
     \item It is not necessary to activate the interior control, i.e., only boundary controls are needed for the asymptotic stabilization of the trajectories to the target $(0,a_2/c_2)$. This relation has an interesting geometric interpretation. It is well known that $\lambda_1$ continuously depends on $\Omega$  \cite{SMO,HA} and, in particular, when $\Omega$ is convex, we have that
\begin{equation}
\label{HA}
c(N)/\rho_{\Omega}^2\leq\lambda_1\leq C(N)/\rho_{\Omega}^2
\end{equation}
where $c(N)$, $C(N)$ are constants that depend only on the dimension $N$ and  $\rho_{\Omega}$ is the \textbf{inradius} of $\Omega$; that is, the radius of the largest ball contained in $\Omega$, 
$$\rho_{\Omega}:=\sup\{r>0; \exists\, x\in\Omega,\ \  B(x,r)\subset\Omega\}.$$
The inequality \eqref{HA} can be seen in \cite[Theorem 7.75]{HA}. Thus, the success of boundary controls ($c_u=0$ and $c_v=a_2/c_2$) will be achieved for the target $(0,a_2/c_2)$, when the inradius $\rho_{\Omega}$ is sufficiently small, meaning when the radius of the largest ball contained in $\Omega$ is small.
     
     \item For $c_u=0$ and $c_v=a_2/c_2$ to be effective in a determined domain $\Omega$, a high diffusion capacity of species $u$ will be required so that the role of the boundary control is diffused into the interior of the domain, affecting not only the population dynamics of species $u$ but also that of species $v$. 
     
     \item The control strategy will effectively drive the system towards the target $(0, a_2/c_2)$, provided that the intrinsic growth rate of $u$ is sufficiently small. Therefore, when condition \eqref{ri} is not satisfied, interior control must be activated. A careful reading of the theorem's proof reveals that this control directly influences the intrinsic growth rate of $u$, ensuring that \eqref{ri} holds with $a_1 + h(x)$ (i.e., $a_1 + \sigma$) instead of $a_1$.
 \end{enumerate}

Naturally, the preceding observations remain valid for the target $({a_1}/{b_1}, 0)$, assuming the internal control operates as described in equation \eqref{e2s}.
\end{itemize}
\end{remark}

\section{Preliminaries for the proof of Theorem \ref{MT3}}\label{Preliminares}\label{Sec3}
In this preliminary section, we aim to reformulate the system to achieve the target $(u^{**}, v^{**})$ through finite-time local controllability with multiplicative internal control. For simplicity, we will focus on the case where the control $h$ acts on the first equation of the system, as described in \eqref{e1_2}. The results established here will play a fundamental role in the proof of the locally  controllable in finite-time to the trajectory of Theorem \ref{MT3}, which represents the second stage in our strategy (Step 2), as detailed in Section \ref{Sec4}.

\subsection{Trajectory control and linear system}\label{subsection_linear_system}
To make the concepts more precise, we consider the system \eqref{e1_2} with Dirichlet boundary conditions equal to zero, that is,
\begin{equation}\label{e1_sem_controle_na_fronteira}
\left\{\begin{array}{ll}
u_t=d_1\Delta u+u(a_1-b_1u-c_1v)+h1_{\omega}u,& (x,t)\in\ Q\\
v_t=d_2\Delta v+v(a_2-b_2u-c_2v),& (x,t)\in Q\\
(u(x,0),v(x,0))=(u_0,v_0),& x\in\Omega\\
u(x,t)=0,\ \ v(x,t)=0,& (x,t)\in\Sigma.
\end{array} \right.   
\end{equation}
Let us consider a sufficiently regular trajectory $(\bar{u}, \bar{v}) = (\bar{u}(x,t),\bar{v}(x,t))$ solution to the following uncontrolled system
\begin{equation}\label{eq_trajectory}
\left\{\begin{array}{ll}
\bar{u}_t=d_1\Delta \bar{u}+\bar{u}(a_1-b_1\bar{u}-c_1\bar{v}),& (x,t)\in Q\\
\bar{v}_t=d_2\Delta \bar{v}+\bar{v}(a_2-b_2\bar{u}-c_2\bar{v}),& (x,t)\in Q\\
(\bar{u}(x,0),\bar{v}(x,0))=(\bar{u}_0,\bar{v}_0),& x\in\Omega\\
\bar{u}(x,t)= 0,\ \ \bar{v}(x,t)= 0,& (x,t)\in\Sigma,
\end{array} \right.   
\end{equation}
where $(\bar{u}_0,\bar{v}_0)\in L^{\infty}(\Omega)\times  L^{\infty}(\Omega)$.
\begin{definition}
    It will be said that \eqref{e1_sem_controle_na_fronteira} is locally  controllable in finite-time to the trajectory $(\bar{u},\bar{v})$ at time $T$ if there exists $\epsilon > 0$ with the following property: If $({u}_0,{v}_0)\in  L^{\infty}(\Omega)\times  L^{\infty}(\Omega)$ and $$\Vert {u}_0 - \bar{u}_{0}\Vert_{L^{\infty}(\Omega)}  + \Vert {v}_0 - \bar{v}_{0}\Vert_{L^{\infty}(\Omega)}\leq \epsilon,$$ 
    then there exist controls $h\in L^{\infty}(\omega\times (0,T))$ and associated states $(u,v)$ such that 
    \[
    u(x,T)=\bar{u}(x,T)\,\,\, \text{and}\,\,\, v(x,T)=\bar{v}(x,T)\,\, \text{in}\,\, \Omega.
    \]
\end{definition}

Note that, if we set \( u = y + \bar{u} \), \( v = z + \bar{v} \),  \( u_0 = y_0 + \bar{u}_0 \), and \( v_0 = z_0 + \bar{v}_0 \) we obtain by \eqref{e1_sem_controle_na_fronteira} and \eqref{eq_trajectory} the following nonlinear system
\begin{equation}\label{e3}
\left\{\begin{array}{ll}{y}_t=d_1\Delta {y}+{y}(a_1-b_1(2\bar{u} + y) - c_1z - c_1 \bar{v}) - c_1\bar{u} z\\
 + \,h1_{\omega}(y+\bar{u}),& (x,t)\in Q\\
{z}_t=d_2\Delta {z} + {z}(a_2-c_2(2\bar{v} + z)-b_2{y}- b_2 \bar{u}) - b_2\bar{v} y,& (x,t)\in Q\\
(y(x,0),z(x,0))=(y_0,z_0),& x\in\Omega\\
{y}(x,t)= 0,\ \ {z}(x,t)= 0,& (x,t)\in \Sigma.
\end{array} \right.   
\end{equation}

That said, we announce the following definition.
\begin{definition}
    Let any non-empty open set $\omega\subset\Omega$. It will be said that \eqref{e3} is locally null-controllable at time $T>0$ if there exists $\delta>0$ such that, for every $(y_0,z_0)\in L^{\infty}(\Omega)\times L^{\infty}(\Omega)$ with
    $$\Vert y_0\Vert_{L^{\infty}(\Omega)}  + \Vert z_0\Vert_{L^{\infty}(\Omega)}\leq \delta,$$ 
there exists controls $h\in L^{\infty}(\omega\times (0,T))$ and associated solutions $(y,z)$ satisfying
$$
y(x,T)=0\,\,\, \text{and}\,\,\, z(x,T)=0\,\, \text{in}\,\, \Omega.
$$
\end{definition}

Therefore, the local controllability in finite-time of the solution to $\eqref{e1_sem_controle_na_fronteira}$ for $(\bar{u},\bar{v})$ is equivalent to the local null controllability of the solution to \eqref{e3}.

\begin{remark}
    \begin{itemize}
        \item 
The use of finite-time controllability by trajectory is a well-established tool in control theory and is widely applied, see \cite{ANNA, Huaman, Zuazua2007}. 
\item Our goal is to employ this technique to ensure that the target $(u^{**}, v^{**})$ is reached exactly in finite-time. To do so, we introduce a subset $\omega_0 \Subset \omega$, i.e,  compactly contained in $\omega$. This assumption is motivated by the structure of the internal control in the corresponding linear system, which is given by $h 1_{\omega} u^{**}$. This formulation imposes the requirement that $u^{**}$ remain strictly positive, a property that is ensured within the closure $\overline{\omega}_0$. This point will be further clarified below.
 \end{itemize}
\end{remark}

It is important to highlight that we are considering the scenario of heterogeneous coexistence, where the conditions $a_1 = a_2 = a$, $d = d_1 = d_2$, $b_1 > b_2$, and $c_1 < c_2$ hold. Let $\omega_0$ be a non-empty open subset of $\mathbb{R}^{N}$ such that $ \omega_0 \Subset \omega$. We then consider the system in \eqref{e3} with $(\bar{u}, \bar{v}) = (u^{**}, v^{**})$, that is, our target is the steady state $(u^{**}, v^{**})$. Under this setting, the system can be reformulated as follows:
\begin{equation}\label{e4}
\left\{\begin{array}{ll}{y}_t=d\Delta {y}+{y}(a-b_1(2{u}^{**} + y) - c_1z - c_1{v}^{**}) - c_1{u}^{**} z\\
 + h1_{\omega}(y+{u}^{**}),& (x,t)\in Q\\
{z}_t=d\Delta {z} + {z}(a-c_2(2{v}^{**} + z)-b_2{y}- b_2 {u}^{**}) - b_2{v}^{**} y,& (x,t)\in Q\\
(y(x,0),z(x,0))=(y_0,z_0),& x\in\Omega\\
{y}(x,t)= 0,\ \ {z}(x,t)= 0,& (x,t)\in \Sigma,
\end{array} \right.   
\end{equation}
and its associated linear at zero is
\begin{equation}\label{linear_1}
\left\{\begin{array}{ll}{y}_t=d\Delta {y}+{y}(a-2b_1u^{**}- c_1 v^{**}) - c_1u^{**} z  + {\widetilde{h}}1_{\omega} + F_0,& (x,t)\in Q\\
{z}_t=d\Delta {z} + {z}(a - b_2 u^{**} - 2c_2v^{**} ) - b_2v^{**} y + F_1,& (x,t)\in Q\\
(y(x,0),z(x,0))=(y_0,z_0),& x\in\Omega\\
{y}(x,t)= 0,\ \ {z}(x,t)= 0,& (x,t)\in\Sigma,
\end{array} \right.   
\end{equation}
where we have considered
\begin{equation}\label{new_control}
    {\widetilde{h}}1_{\omega} = h1_{\omega}{u}^{**},
\end{equation}
and $F_0, F_1$ belonging to the appropriate weighted function spaces. From \eqref{new_control}, it is immediate that $\widetilde{h} 1_{\omega_0} = h 1_{\omega_0} u^{**}$, and it is important to emphasize that since $\omega_0 \Subset \omega$, $u^{**} > 0$ in $\overline{\omega}_0$.

In order to establish the local null controllability of the system \eqref{e4}, it is necessary first to demonstrate the global null controllability of the associated linear system \eqref{linear_1}. 

\subsection{Null controllability of linear system}

This subsection will be dedicated to verifying the global null controllability of the system \eqref{linear_1}. This will be covered briefly, as we will rely on the results presented by \cite{Clark}. 

We begin by introducing a new non-empty open set $\omega_1$, with $\omega_1 \Subset \omega_0$, so as to enable the application of a classical result by Fursikov and Imanuvilov \cite{Fur_Ima-96}.
\begin{lemma}
There exists a function $\eta\in C^{2}(\overline{\Omega})$ satisfying:
\begin{equation*}
    \left\{\begin{array}{cc}
    \eta(x)>0 & \forall x\in \Omega\\ \eta(x)=0& \forall x\in\partial\Omega\\
    \vert\nabla\eta(x)\vert > 0 & \forall x\in \overline{\Omega}\setminus \omega_{1}.
    \end{array}\right.
\end{equation*}
\end{lemma}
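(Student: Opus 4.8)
The plan is to follow the classical argument of Fursikov and Imanuvilov \cite{Fur_Ima-96}: first produce a candidate function with the correct boundary behaviour but possibly with interior critical points, and then relocate those critical points into the prescribed open set $\omega_1$ by means of an ambient diffeomorphism that leaves a neighbourhood of $\partial\Omega$ untouched. Throughout I may assume $\Omega$ is connected; otherwise one argues component by component, noting that each component must meet $\omega_1$, since a positive function vanishing on the boundary of a bounded component necessarily attains an interior maximum, which is a critical point.

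As a first approximation I would take $\eta_0\in C^{2}(\overline{\Omega})$ to be the solution of $-\Delta\eta_0=1$ in $\Omega$, $\eta_0=0$ on $\partial\Omega$. The smoothness of $\partial\Omega$ guarantees $\eta_0\in C^{2}(\overline{\Omega})$, the maximum principle gives $\eta_0>0$ in $\Omega$, and Hopf's lemma yields $\partial_\nu\eta_0<0$ on $\partial\Omega$, so that $|\nabla\eta_0|>0$ on a whole neighbourhood $V$ of $\partial\Omega$. Consequently the critical set $K=\{x\in\Omega:\nabla\eta_0(x)=0\}$ is a compact subset of $\Omega\setminus V$. By a standard transversality (genericity) argument, after a small $C^{2}$ perturbation of $\eta_0$ supported in a compact subset of $\Omega\setminus V$ — which alters neither the boundary values nor the non-vanishing of the gradient near $\partial\Omega$ — I may assume that all critical points are nondegenerate, hence that $K=\{p_1,\dots,p_m\}$ is finite.

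The decisive step is the relocation of $p_1,\dots,p_m$ into $\omega_1$. Using connectedness of $\Omega$, I would join each $p_j$ to a point of $\omega_1$ by a smooth path inside $\Omega$ and construct, by integrating a compactly supported (in $\Omega$) vector field along these paths, a diffeomorphism $\Phi:\overline{\Omega}\to\overline{\Omega}$, isotopic to the identity, equal to the identity on $V$, with $\Phi(p_j)\in\omega_1$ for every $j$. Setting $\eta:=\eta_0\circ\Phi^{-1}$, the chain rule gives $\nabla\eta(x)=(D\Phi(\Phi^{-1}(x)))^{-\top}\,\nabla\eta_0(\Phi^{-1}(x))$, so that $\nabla\eta(x)=0$ exactly when $\Phi^{-1}(x)\in K$, i.e. on $\Phi(K)\subset\omega_1$. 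Since $\Phi=\mathrm{id}$ near $\partial\Omega$, $\eta$ inherits $\eta=0$ on $\partial\Omega$, $\eta>0$ in $\Omega$, and $|\nabla\eta|>0$ on $V$; together with the absence of critical points outside $\omega_1$, this yields $|\nabla\eta|>0$ on all of $\overline{\Omega}\setminus\omega_1$, as required.

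I expect the main obstacle to be the construction of the ambient diffeomorphism $\Phi$: one must move a finite set of interior points into a prescribed nonempty open set while fixing a neighbourhood of the boundary and preserving the diffeomorphism property. This is precisely where connectedness of $\Omega$ enters, guaranteeing the required paths, and it is handled by the standard homogeneity and isotopy-extension machinery for manifolds; reducing the critical set to finitely many points in the previous step is exactly what makes this relocation clean. The construction of $\eta_0$ and its perturbation are routine, so the topological relocation carries the essential content of the lemma.
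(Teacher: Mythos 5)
Your proposal is correct and follows precisely the classical Fursikov--Imanuvilov construction (boundary-regular first approximation, perturbation to finitely many nondegenerate critical points, relocation of those points into $\omega_1$ by an ambient diffeomorphism fixing a neighbourhood of $\partial\Omega$), which is exactly the argument the paper invokes by citing \cite{Fur_Ima-96} without reproducing a proof. No substantive discrepancy to report.
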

Then, let $m$ be a function that does not vanishing in $t=0$, i.e.,
\begin{equation*}
    m\in C^{\infty}([0,T]), \,\,\,\, m(t)\geq \frac{T^{2}}{8}\,\, \,\,\text{in}\,\, \,\,[0,T/2],\,\, \,\,m(t)=t(T-t)\,\, \,\,\text{in}\,\,\,\, [T/2,T].
\end{equation*}
Let us set 
\begin{equation}\label{gamma_e_beta}    \gamma(x,t):=\frac{e^{\lambda\eta(x)}}{m(t)},\,\,
    \beta(x,t):= \frac{e^{R\lambda}- e^{\lambda\eta(x)}}{m(t)},\,\,\text{where}\,\,R>\Vert\eta\Vert_{L^{\infty}(\Omega)}\,\, \text{and}\,\, \lambda>0.
\end{equation}
and let us introduce the notation
\begin{equation*}
    \begin{array}{lll}
        I(s, \lambda; \zeta) &:=& \displaystyle\iint_Q e^{-2s\beta}\left[ (s\gamma)^{-1}\left(|\zeta_{t}|^{2} + |\Delta\zeta|^{2}\right)\right.\\
        &\quad & \left. + \lambda^{2}(s\gamma)|\nabla\zeta|^{2} + \lambda^{4}(s\gamma)^{3}|\zeta|^{2}\right]dxdt.
    \end{array}
\end{equation*}

To announce Carleman inequality, we need the adjoint system of \eqref{linear_1} which is given by
\begin{equation}\label{adjoint}
    \left\{\begin{array}{ll}
         -\varphi_{t} = d\Delta \varphi + \varphi\left(a - 2b_1 u^{**} - c_1v^{**}\right)- b_{2}v^{**} \psi + H_0, & (x,t)\in Q\\
         -\psi_{t} = d\Delta\psi + \psi\left( a - b_2 u^{**} - 2c_{2}v^{**} \right) - c_1 u^{**}\varphi  + H_1,  & (x,t)\in Q\\
         (\varphi(x,T),\psi(x,T) )= (\varphi_{T} (x),\psi_{T}(x)),   & x\in \Omega\\
         \varphi(x,t) = 0,\,\,\,  \psi(x,t) = 0, & (x,t)\in\Sigma,
    \end{array}\right.
\end{equation}
where $(H_0, H_1)\in L^{2}(Q)\times L^{2}(Q)$ and $(\varphi_{t}(x), \psi_{T}(x))\in L^{\infty}(\Omega)\times L^{\infty}(\Omega)$. Therefore, by \cite{Clark}, we find that the following Carleman estimate is valid:
\begin{proposition}\label{Carleman_inicial}

Let $(H_0, H_1)\in L^{2}(Q)\times L^{2}(Q)$. There exist positive constants \( \bar{\lambda} \), \( \bar{s} \) such that, for any \( s \geq \bar{s}\) and \( \lambda \geq \bar{\lambda} \), there exists $\bar{C}(s,\lambda)$ with the following property: for any $(\varphi_T(x),\psi_T(x))\in L^{2}(\Omega)\times  L^{2}(\Omega)$, the associated solution to
\eqref{adjoint} satisfies
\begin{equation*}
\begin{array}{l}
I(s, \lambda; \varphi) + I(s, \lambda; \psi) \\
\leq \bar{C}(s,\lambda)\left[ \displaystyle\iint_Q e^{-2s\beta} \left(\gamma^{3}|H_0|^{2} + |H_1|^{2}\right)dxdt + \displaystyle\iint_{\omega_0\times(0,T)}e^{-2s\beta}\gamma^{7}|\varphi|^{2}dxdt\right].
\end{array}
\end{equation*}
Furthermore, $\bar{\lambda}$ and $\bar{s}$ only depend on $\Omega$, $\omega$, T,  $d$,
$(a - 2b_1 u^{**} - c_1v^{**})$, $ b_{2}v^{**}$, $( a - b_2 u^{**} - 2c_{2}v^{**})$ and $c_1 u^{**}$ and $\bar{C}(s,\lambda)$ only depend on these data $s$ and $\lambda$.
\end{proposition}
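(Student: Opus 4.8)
The plan is to read Proposition \ref{Carleman_inicial} as a Carleman estimate for a $2\times2$ cascade system observed through a single component, namely $\varphi$ on $\omega_0\times(0,T)$. The starting point is the classical scalar Carleman inequality of Fursikov and Imanuvilov \cite{Fur_Ima-96}, built from the weight $\eta$ furnished by the preceding Lemma, whose gradient degenerates only inside $\omega_1$. I would apply this scalar estimate separately to each of the two parabolic equations in \eqref{adjoint}, regarding the coupling terms $-b_2v^{**}\psi$ and $-c_1u^{**}\varphi$, together with $H_0$ and $H_1$, as source terms. Summing the two resulting inequalities produces a bound on $I(s,\lambda;\varphi)+I(s,\lambda;\psi)$ by a weighted global integral of $|H_0|^2$, $|H_1|^2$, $|v^{**}\psi|^2$ and $|u^{**}\varphi|^2$, plus local integrals of $|\varphi|^2$ and $|\psi|^2$ over $\omega_1\times(0,T)$.

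The first reduction is to absorb the two global coupling integrals. Since $u^{**}$ and $v^{**}$ are bounded, the terms $\iint_Q e^{-2s\beta}|v^{**}\psi|^2$ and $\iint_Q e^{-2s\beta}|u^{**}\varphi|^2$ are controlled by $C\iint_Q e^{-2s\beta}(|\varphi|^2+|\psi|^2)$, which is dominated by the $\lambda^4(s\gamma)^3$-weighted zeroth-order contributions already present in $I(s,\lambda;\varphi)+I(s,\lambda;\psi)$. Hence, for $s$ and $\lambda$ large enough (which fixes the thresholds $\bar s$, $\bar\lambda$), these global terms can be moved to the left-hand side and eliminated. The asymmetry between the powers $\gamma^3$ on $H_0$ and $\gamma^0$ on $H_1$ in the final statement is a bookkeeping artifact of the next step, not of this absorption.

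The crux — and the step I expect to be the main obstacle — is the removal of the local term in $\psi$, since the observation is available only for $\varphi$. Here I would use the $\varphi$-equation itself to recover $\psi$: on $\overline{\omega_1}$ one has $b_2v^{**}\psi=\varphi_t+d\Delta\varphi+\varphi(a-2b_1u^{**}-c_1v^{**})+H_0$, and crucially $v^{**}>0$ on $\overline{\omega_1}$, because $\omega_1\Subset\omega_0\Subset\omega$ and $v^{**}=\tfrac{b_1-b_2}{b_1c_2-c_1b_2}\theta>0$ throughout $\Omega$. This replaces $\iint_{\omega_1}(\cdots)|\psi|^2$ by local integrals of $|\varphi_t|^2$, $|\Delta\varphi|^2$, $|\varphi|^2$ and $|H_0|^2$. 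The difficulty is that $|\varphi_t|^2$ and $|\Delta\varphi|^2$ are of higher order and cannot remain on the right; I would control them by introducing a cutoff $\xi\in C_c^\infty(\omega_0)$ with $\xi\equiv1$ on $\omega_1$ and performing weighted integrations by parts (an order-reduction / local energy argument) that trade these derivatives for a higher power of the weight on $|\varphi|^2$ — precisely the factor $\gamma^7$ in the statement — plus an arbitrarily small multiple of $I(s,\lambda;\varphi)$ that is reabsorbed on the left.

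Finally, the residual local term $\iint_{\omega_1}e^{-2s\beta}(\cdots)|H_0|^2$ is absorbed into the global source contribution, which accounts for the weight $\gamma^3$ attached to $|H_0|^2$. Collecting all the estimates yields exactly the inequality asserted in Proposition \ref{Carleman_inicial}, with $\bar\lambda$, $\bar s$ depending only on the listed data and $\bar C(s,\lambda)$ on those data together with $s$ and $\lambda$. Since this chain of arguments is carried out in detail for cascade systems of this type in \cite{Clark}, I would invoke that reference for the complete justification, limiting the present proof to checking that its structural hypotheses hold here — in particular the positivity of $v^{**}$ on $\overline{\omega_1}$ and the boundedness of all zeroth-order coefficients.
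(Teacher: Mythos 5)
Your proposal is correct and, at its core, takes the same route as the paper: the paper's proof simply observes that $b_2>0$ and $v^{**}>0$ in $\overline{\omega}_0$ (so the coupling coefficient $b_2v^{**}$ through which $\psi$ is recovered from the $\varphi$-equation is non-degenerate) and then invokes Proposition 2.3 of \cite{Clark}, exactly as you do in your final paragraph. The intermediate sketch you give — scalar Carleman estimates on each equation, absorption of the bounded coupling terms for large $s,\lambda$, and the cutoff/order-reduction argument trading $|\varphi_t|^2,|\Delta\varphi|^2$ for the $\gamma^7|\varphi|^2$ local term — is an accurate outline of what the cited reference carries out, so it is a harmless (indeed helpful) elaboration rather than a different method.
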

\begin{proof}
     Given that \( b_2 > 0 \) and \( v^{**} > 0 \) in \( \omega_0 \Subset \omega \), it follows that \( b_2 v^{**} \neq 0 \) in \( \overline{\omega}_0 \) from equation \eqref{adjoint}, and the result follows directly from Proposition 2.3 in \cite{Clark}.
\end{proof}

As a consequence of Proposition \ref{Carleman_inicial}, we obtain the null controllability of \eqref{linear_1} for ``small" right-hand sides \( F_0 \) and $F_1$  with weights that do not vanish when $t\to 0^{+}$.
\begin{proposition}\label{cnsl}
Assume that the functions \( F_0 \) and \( F_1 \) in \eqref{linear_1} satisfy
\[
\iint_Q e^{2s\beta}\gamma^{-3} \left(  |F_0|^2 + |F_1|^2 \right) dx dt < +\infty.
\]
Then \eqref{linear_1} is null-controllable, i.e., for any $(y_0, z_0)\in L^{2}(\Omega)\times L^{2}(\Omega)$, there exist controls $\widetilde{h}\in L^{\infty}(\omega_0\times (0,T))$ and associated states $(y,z)$ verifying 
\begin{equation}\label{regularidade_inicial_pesos_que_dependem_de_x_e_t}
    \begin{array}{c}
\displaystyle\iint_{Q}e^{2s{{\beta}}}\left({{\gamma}}^{-3}\vert y\vert^{2} + \vert z \vert^{2}\right)dxdt +  \displaystyle\iint_{\omega_0\times (0,T)}e^{2s{{\beta}}}{{\gamma}}^{-7}\vert \widetilde{h}\vert^{2}dxdt \\
+ \Vert\widetilde{h}\Vert^{2}_{L^{\infty}(\omega_0\times (0,T))}
\leq C\,
\kappa(y_0,z_0,F_0,F_1),
    \end{array}
\end{equation}
where
$$
\kappa(y_0,z_0,F_0,F_1) = \Vert y_0\Vert^{2}_{L^{2}(\Omega)} +  \Vert z_0\Vert^{2}_{L^{2}(\Omega)} + \displaystyle\iint_{Q}  e^{2s\beta}\gamma^{-3} \left(  |F_0|^2 + |F_1|^2 \right) dx dt.
$$
In particular, $y(x,T)=0$ and $z(x,T)=0$ in $\Omega$.  
In addition, it is also obtained that
\begin{equation}\label{regularidade_a_mais_com_pesos_que_dependem_de_x_e_t}
\begin{array}{lll}
\displaystyle\iint_{Q} e^{2s\beta}{\gamma}^{-5} (\vert \nabla y\vert^{2} + \vert \nabla z\vert^{2})dxdt\leq C\, \kappa(y_0,z_0,F_0,F_1).
\end{array}
\end{equation}
\end{proposition}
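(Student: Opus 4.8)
The plan is to produce the control by the classical duality method of Fursikov and Imanuvilov \cite{Fur_Ima-96}, turning the Carleman estimate of Proposition \ref{Carleman_inicial} into the needed weighted observability. Write $\mathcal{L}^*=(L_1^*,L_2^*)$ for the adjoint operator of \eqref{adjoint}, i.e. $L_1^*(\varphi,\psi)=-\varphi_t-d\Delta\varphi-(a-2b_1u^{**}-c_1v^{**})\varphi+b_2v^{**}\psi$ and $L_2^*(\varphi,\psi)=-\psi_t-d\Delta\psi-(a-b_2u^{**}-2c_2v^{**})\psi+c_1u^{**}\varphi$. On the space $P_0$ of smooth pairs vanishing on $\Sigma$ I would introduce the symmetric bilinear form
\begin{equation*}
\begin{aligned}
a\big((\varphi,\psi),(\hat\varphi,\hat\psi)\big)&=\iint_Q e^{-2s\beta}\Big[\gamma^3 L_1^*(\varphi,\psi)\,L_1^*(\hat\varphi,\hat\psi)+L_2^*(\varphi,\psi)\,L_2^*(\hat\varphi,\hat\psi)\Big]\,dxdt\\
&\quad+\iint_{\omega_0\times(0,T)}e^{-2s\beta}\gamma^7\varphi\,\hat\varphi\,dxdt,
\end{aligned}
\end{equation*}
together with the functional $\ell(\varphi,\psi)=\iint_Q(F_0\varphi+F_1\psi)\,dxdt+\int_\Omega(y_0\varphi(\cdot,0)+z_0\psi(\cdot,0))\,dx$. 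Choosing $H_0=L_1^*(\varphi,\psi)$ and $H_1=L_2^*(\varphi,\psi)$ in Proposition \ref{Carleman_inicial} shows that $a((\varphi,\psi),(\varphi,\psi))$ dominates $I(s,\lambda;\varphi)+I(s,\lambda;\psi)$, so $a$ is coercive for the Carleman norm; let $P$ denote the completion of $P_0$ in this norm.

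The role of the hypotheses appears in the continuity of $\ell$ on $P$. The source terms are handled by Cauchy--Schwarz, $|\iint_Q F_0\varphi|\le(\iint_Q e^{2s\beta}\gamma^{-3}|F_0|^2)^{1/2}(\iint_Q e^{-2s\beta}\gamma^3|\varphi|^2)^{1/2}$, the first factor being finite exactly by the assumed integrability of $F_0$ and $F_1$ and the second controlled by $a$. The initial-data terms need $\|\varphi(\cdot,0)\|_{L^2}+\|\psi(\cdot,0)\|_{L^2}$ bounded by the Carleman norm; this is where the choice of $m$ with $m\ge T^2/8$ on $[0,T/2]$ matters, since it keeps the weights bounded near $t=0$ and a standard energy estimate then transfers the interior bound to the initial slice. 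Lax--Milgram gives a unique $(\hat\varphi,\hat\psi)\in P$ with $a((\hat\varphi,\hat\psi),\cdot)=\ell(\cdot)$, and I would set $y=e^{-2s\beta}\gamma^3 L_1^*(\hat\varphi,\hat\psi)$, $z=e^{-2s\beta}L_2^*(\hat\varphi,\hat\psi)$, $\widetilde h=-e^{-2s\beta}\gamma^7\hat\varphi\,1_{\omega_0}$. The Euler--Lagrange equation says precisely that $(y,z)$ solves \eqref{linear_1} with this control and satisfies $y(\cdot,T)=z(\cdot,T)=0$. All the weighted $L^2$ bounds in \eqref{regularidade_inicial_pesos_que_dependem_de_x_e_t} except the $L^\infty$ term then fall out of $a((\hat\varphi,\hat\psi),(\hat\varphi,\hat\psi))=\ell(\hat\varphi,\hat\psi)\le\|\ell\|\,\|(\hat\varphi,\hat\psi)\|_a$, because each of those integrals equals a piece of $a((\hat\varphi,\hat\psi),(\hat\varphi,\hat\psi))$ while $\|\ell\|^2$ is bounded by $\kappa(y_0,z_0,F_0,F_1)$.

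The genuine difficulty, and the reason the statement leans on \cite{Clark}, is the $L^\infty$ bound on $\widetilde h$: it is invisible to the $L^2$-based Carleman estimate. The plan here is a bootstrap, reading \eqref{linear_1} as a linear parabolic system whose right-hand side is already controlled in weighted $L^2$, then applying $L^p$ parabolic regularity and Sobolev embedding to upgrade the integrability of $(y,z)$ and hence of $\widetilde h$, iterating until $\widetilde h\in L^\infty(\omega_0\times(0,T))$; the nondegeneracy $u^{**}>0$ on $\overline{\omega}_0$, which is exactly why $\omega_0\Subset\omega$ is imposed, is what lets one recover the original multiplicative control $h=\widetilde h/u^{**}$ with the same bounds. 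Finally, the gradient estimate \eqref{regularidade_a_mais_com_pesos_que_dependem_de_x_e_t} I would obtain from a weighted energy estimate: multiply the equations of \eqref{linear_1} by $e^{2s\beta}\gamma^{-5}y$ and $e^{2s\beta}\gamma^{-5}z$, integrate over $Q$, and absorb the lower-order and time-derivative-of-weight terms using the bounds already in hand, the weight powers being matched so that the derivatives of $e^{2s\beta}\gamma^{-5}$ stay comparable to $e^{2s\beta}\gamma^{-3}$.
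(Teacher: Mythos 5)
Your variational construction of the control is sound and is essentially the dual form of what the paper does: the paper minimizes a penalized weighted functional $J_n$ over admissible $(y,z,\widetilde h)$ and extracts the optimality system $p_n=\bar\rho_n^2\widetilde h_n$, which is exactly your Lax--Milgram identity $\widetilde h=-e^{-2s\beta}\gamma^7\hat\varphi\,1_{\omega_0}$, $y=e^{-2s\beta}\gamma^3L_1^*(\hat\varphi,\hat\psi)$, $z=e^{-2s\beta}L_2^*(\hat\varphi,\hat\psi)$ read backwards. The weighted $L^2$ bounds, the treatment of the initial slice via the non-degeneracy of $m$ near $t=0$, and the gradient estimate \eqref{regularidade_a_mais_com_pesos_que_dependem_de_x_e_t} by multiplying \eqref{linear_1} by $e^{2s\beta}\gamma^{-5}y$ and $e^{2s\beta}\gamma^{-5}z$ all match the paper's argument.

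The gap is in the $L^\infty$ estimate on $\widetilde h$, which you rightly single out as the genuine difficulty but then attack from the wrong side. You propose to bootstrap $L^p$ parabolic regularity on the \emph{state} system \eqref{linear_1} ``to upgrade the integrability of $(y,z)$ and hence of $\widetilde h$.'' But $\widetilde h$ enters \eqref{linear_1} as a datum, not as an output: no amount of extra regularity for $(y,z)$ says anything about $\widetilde h$, so the ``hence'' does not follow. The correct object to bootstrap is the \emph{adjoint} pair, because $\widetilde h$ is a weighted multiple of $\hat\varphi$ on $\omega_0$ and $(\hat\varphi,\hat\psi)$ solves the backward system \eqref{adjoint} with right-hand sides $e^{2s\beta}\gamma^{-3}y$ and $e^{2s\beta}z$ --- precisely the quantities already controlled in $L^2$ by the first part. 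This is what the paper does: it reverses time in \eqref{adjoint_de_yn}, multiplies by a chain of weights $e^{(s+\xi_j)\alpha_{0n}}$ chosen so that the commutators $g_j,k_j$ coming from differentiating the weight stay controlled, and applies the heat-semigroup smoothing $\Vert S(t)\psi\Vert_{L^{r_{j+1}}}\leq C\max\{t^{-\frac N2(1/r_j-1/r_{j+1})},1\}\Vert\psi\Vert_{L^{r_j}}$ along a finite sequence $r_0=2<\dots<r_M=\infty$ (using $N\leq 3$ to make each exponent integrable in time), arriving at $\Vert e^{(s+\xi)\alpha_{0n}}p_n\Vert_{L^\infty(Q)}^2\leq C\kappa$ and hence, for $\xi$ small, at $\Vert\widetilde h_n\Vert^2_{L^\infty(\omega_0\times(0,T))}\leq C\kappa$. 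Your sketch, as written, cannot produce this bound; redirecting the bootstrap to the adjoint system (and tracking the exponential weights so that the leftover factor $-s(1-e^{-\lambda R})+\xi(1+e^{-\lambda R})$ is negative) is the missing ingredient.
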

\begin{proof}
    See \ref{Appendix}.
\end{proof}

Now, to prove that the abstract map that rewrites the nonlinear problem \eqref{e4} (see \eqref{Mapa_A}) is well-defined, we need to obtain weighted estimates with weights that are independent of \( x \). To this end, we will define the following functions based on the weights defined in \eqref{gamma_e_beta}. Let us consider
\begin{equation*}
\begin{array}{cc}
   \beta^{\ast}(t)=\displaystyle\max_{x\in\overline{\Omega}}\beta(x,t),  &   \gamma^{\ast}(t)=\displaystyle\min_{x\in\overline{\Omega}}\gamma(x,t),\\
  \hat{\beta}(t)=\displaystyle\min_{x\in\overline{\Omega}}\beta(x,t),  &\hat{\gamma}(t)=\displaystyle\max_{x\in\overline{\Omega}}\gamma(x,t),
\end{array}
\end{equation*}
such that the constant $R$ is chosen large enough so that
\begin{equation}\label{desigualdade_dos_beta}
    2\hat{\beta} > \beta^{*}.
\end{equation}
Additionally, we introduce the notation
\begin{equation*}
\begin{array}{lll}
\widetilde{I}(s,\lambda;\phi) &:=& \displaystyle\iint_{Q}e^{-2s\beta^{*}}\left[(s{\hat{\gamma}})^{-1}(\vert\phi_{t}\vert^{2} + \vert\Delta\phi\vert^{2}) \right.\\
&\quad& \left. + \lambda^{2}(s\gamma^{*})\vert\nabla\phi\vert^{2} + \lambda^{4}(s\gamma^*)^{3}\vert\phi\vert^{2} \right]dxdt.
\end{array}
\end{equation*}

\begin{proposition}\label{Carleman_adoint}
    Let us assume that $(H_0, H_1)\in L^{2}(Q)\times L^{2}(Q)$. There exist positive constants $\lambda_0, s_0$ such that, for any $s\geq s_0$ and $\lambda\geq\lambda_0$, there exists $C_0(s,\lambda)$ with the following property: for any $(\varphi_T(x),\psi_T(x))\in L^{2}(\Omega)\times L^{2}(\Omega)$, the associated solution to \eqref{adjoint} satisfies
\begin{equation}\label{Carleman_1}
    \begin{array}{l}
       \widetilde{I}(s,\lambda;\varphi) + \widetilde{I}(s,\lambda;\psi) \\
       \leq C_{0}(s,\lambda)\left(\displaystyle\iint_{Q}e^{-2s{\hat{\beta}}}\left[{\hat{\gamma}}^{3}\vert H_0\vert^{2} + \vert H_1\vert^{2}\right] dxdt + \displaystyle\iint_{\omega_0\times (0,T)}e^{-2s{\hat{\beta}}}{\hat{\gamma}}^{7}\vert\varphi\vert^{2}dxdt\right).
    \end{array}
\end{equation}
Furthermore, $\lambda_0$ and $s_0$ depend only on $\Omega, \omega, T$ and the coefficients of $\varphi, \psi, \Delta\varphi, \Delta\psi$.
\end{proposition}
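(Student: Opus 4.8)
The plan is to derive Proposition~\ref{Carleman_adoint} directly from Proposition~\ref{Carleman_inicial} by a pointwise comparison of the two families of weights, rather than re-running the Carleman machinery. The whole argument rests on the observation that, because $0 \le \eta \le \|\eta\|_{L^{\infty}(\Omega)}$ on $\overline{\Omega}$ and $m(t) > 0$ on $[0,T]$, the space-independent weights introduced just before the statement satisfy the chain of pointwise inequalities
\[
\hat{\beta}(t) \le \beta(x,t) \le \beta^{*}(t), \qquad \gamma^{*}(t) \le \gamma(x,t) \le \hat{\gamma}(t),
\]
for every $(x,t)\in Q$; these are immediate from the definitions $\beta^{*}=\max_x\beta$, $\hat{\beta}=\min_x\beta$, $\gamma^{*}=\min_x\gamma$, $\hat{\gamma}=\max_x\gamma$. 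With $\bar{\lambda}$, $\bar{s}$ and $\bar{C}(s,\lambda)$ the constants furnished by Proposition~\ref{Carleman_inicial}, I would simply set $\lambda_0=\bar{\lambda}$, $s_0=\bar{s}$ and $C_0(s,\lambda)=\bar{C}(s,\lambda)$, and show that \eqref{Carleman_1} follows for all $s\ge s_0$, $\lambda\ge\lambda_0$.

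First I would bound the left-hand side of \eqref{Carleman_1} by the left-hand side of the estimate in Proposition~\ref{Carleman_inicial}. Since $s>0$, the inequality $\hat{\beta}\le\beta\le\beta^{*}$ gives $e^{-2s\beta^{*}}\le e^{-2s\beta}$, while $\gamma^{*}\le\gamma\le\hat{\gamma}$ gives $(s\hat{\gamma})^{-1}\le(s\gamma)^{-1}$, $s\gamma^{*}\le s\gamma$, and $(s\gamma^{*})^{3}\le(s\gamma)^{3}$. Multiplying these term by term, each summand of the integrand defining $\widetilde{I}(s,\lambda;\zeta)$ is dominated by the corresponding summand of $I(s,\lambda;\zeta)$, whence $\widetilde{I}(s,\lambda;\varphi)+\widetilde{I}(s,\lambda;\psi)\le I(s,\lambda;\varphi)+I(s,\lambda;\psi)$ for the adjoint solution $(\varphi,\psi)$ of \eqref{adjoint}.

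Next I would bound the right-hand side of Proposition~\ref{Carleman_inicial} by the right-hand side of \eqref{Carleman_1}. Here the inequalities run the other way: $e^{-2s\beta}\le e^{-2s\hat{\beta}}$ and $\gamma\le\hat{\gamma}$, so pointwise on $Q$ (in particular on $\omega_0\times(0,T)$ for the local term) one has $e^{-2s\beta}\gamma^{3}\le e^{-2s\hat{\beta}}\hat{\gamma}^{3}$, $e^{-2s\beta}\le e^{-2s\hat{\beta}}$, and $e^{-2s\beta}\gamma^{7}\le e^{-2s\hat{\beta}}\hat{\gamma}^{7}$. Chaining the two bounds then reproduces exactly the right-hand side of \eqref{Carleman_1}, which completes the argument.

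Honestly, I do not expect a genuine obstacle: the entire proof is bookkeeping to check that every weight comparison points in the direction that keeps the estimate in force, namely that the \emph{smaller} weights appear on the left-hand side and the \emph{larger} ones on the right. The only point demanding real care is the consistency of these directions, since a single reversed inequality — for instance confusing $\gamma^{*}$ with $\hat{\gamma}$ in the gradient term, or $\hat{\beta}$ with $\beta^{*}$ in an exponential — would break the chain and invalidate the estimate. I would also note that the reinforced condition \eqref{desigualdade_dos_beta}, namely $2\hat{\beta}>\beta^{*}$, secured by choosing $R$ large enough in \eqref{gamma_e_beta}, is \emph{not} needed for this comparison; it is reserved for the later estimates on the nonlinear terms and for the well-posedness of the fixed-point map used in the proof of Theorem~\ref{MT3}.
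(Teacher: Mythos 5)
Your proof is correct: all four pointwise comparisons ($\hat{\beta}\le\beta\le\beta^{*}$, $\gamma^{*}\le\gamma\le\hat{\gamma}$) do point in the favorable direction, so $\widetilde{I}(s,\lambda;\varphi)+\widetilde{I}(s,\lambda;\psi)\le I(s,\lambda;\varphi)+I(s,\lambda;\psi)$ while the right-hand side of Proposition \ref{Carleman_inicial} is dominated by that of \eqref{Carleman_1}, and chaining the two with $\lambda_0=\bar{\lambda}$, $s_0=\bar{s}$, $C_0=\bar{C}$ gives the claim. The paper's own proof is a one-line instruction to ``follow the same reasoning'' as Proposition \ref{Carleman_inicial}, i.e.\ to re-invoke the Carleman machinery of \cite{Clark}; your derivation instead obtains Proposition \ref{Carleman_adoint} as a direct corollary of Proposition \ref{Carleman_inicial} by weight comparison, which is more economical and is possible here precisely because the powers of $\gamma$ on the two sides are unchanged (no absorption step is needed). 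You are also right that the reinforced condition \eqref{desigualdade_dos_beta} plays no role at this stage and is only used later for the nonlinear estimates in \ref{Appendix_B}.
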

\begin{proof}
Follow the same reasoning used to obtain the Proposition \ref{Carleman_inicial}.
\end{proof}

The previous result, together with Proposition \ref{cnsl}, allows us to efficiently deduce the null controllability of system \eqref{linear_1}, with weights that are independent of the spatial variable.

\begin{proposition}\label{control_of_SL}
   Suppose the functions $F_0$ and $F_1$ in \eqref{linear_1} satisfy
    \begin{equation}\label{condi_ao_das_F}
       \begin{array}{l} \displaystyle\iint_{Q}e^{2s\beta^{*}}({\gamma^*})^{-3}\left(\vert F_{0}\vert^{2} + \vert F_1\vert^{2} \right)dxdt < + \infty.
       \end{array}
    \end{equation}
Then \eqref{linear_1} is null-controllable, i.e., for any $(y_0, z_0)\in L^{2}(\Omega)\times L^{2}(\Omega) $, there exist controls $\widetilde{h}\in L^{\infty}(\omega_0\times (0,T))$ and associated states $(y,z)$ verifying 
\begin{equation}\label{regularity_for_y_and_z}
    \begin{array}{c}
\displaystyle\iint_{Q}e^{2s{\hat{\beta}}}\left(\hat{{\gamma}}^{-3}\vert y\vert^{2} + \vert z \vert^{2}\right)dxdt +  \displaystyle\iint_{\omega_0\times (0,T)}e^{2s{\hat{\beta}}}{\hat{\gamma}}^{-7}\vert \widetilde{h}\vert^{2}dxdt\\
+ \Vert\widetilde{h}\Vert^{2}_{L^{\infty}(\omega_0\times (0,T))}
\leq C\,
\tilde{\kappa}(y_0,z_0,F_0,F_1),
    \end{array}
\end{equation}
where 
$$
\tilde{\kappa}(y_0,z_0,F_0,F_1) = \Vert y_0\Vert^{2}_{L^{2}(\Omega)} +  \Vert z_0\Vert^{2}_{L^{2}(\Omega)} + \displaystyle\iint_{Q}e^{2s\beta^{*}}({\gamma^*})^{-3}\left(\vert F_{0}\vert^{2} + \vert F_1\vert^{2} \right)dxdt.
$$
In particular, $y(x,T)=0$ and $z(x,T)=0$ in $\Omega$.  
In addition, it is also obtained that
\begin{equation}\label{regularidade_em_H1}
\begin{array}{c}
\displaystyle\sup_{[0,T]}\displaystyle\int_{\Omega}e^{2s{\hat{\beta}}}{\hat{\gamma}}^{-5} ( \vert  y\vert^{2} +  \vert  z\vert^{2}) dx +
\displaystyle\iint_{Q} e^{2s{\hat{\beta}}}{\hat{\gamma}}^{-5} (\vert \nabla y\vert^{2} + \vert \nabla z\vert^{2})dxdt \\
\leq  C\, \tilde{\kappa}(y_0,z_0,F_0,F_1).
\end{array}
\end{equation}
 \end{proposition}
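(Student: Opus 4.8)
The plan is to take the control $\widetilde h$ and the associated state $(y,z)$ produced by Proposition~\ref{cnsl} and to prove that, after the spatial weights are replaced by their spatial extrema, this same pair satisfies the sharper bounds \eqref{regularity_for_y_and_z}--\eqref{regularidade_em_H1}. First I would verify that the present hypothesis \eqref{condi_ao_das_F} is stronger than the one needed in Proposition~\ref{cnsl}: reading off from \eqref{gamma_e_beta}, together with $\eta\ge 0$, $\eta|_{\partial\Omega}=0$ and $R>\Vert\eta\Vert_{L^{\infty}(\Omega)}$, one gets the pointwise chains $\hat\beta\le\beta\le\beta^{*}$ and $\gamma^{*}\le\gamma\le\hat\gamma$, with $\gamma^{*}=1/m$ bounded below by a positive constant on $[0,T]$; hence $e^{2s\beta}\gamma^{-3}\le e^{2s\beta^{*}}(\gamma^{*})^{-3}$, so \eqref{condi_ao_das_F} implies finiteness of $\iint_{Q}e^{2s\beta}\gamma^{-3}(|F_0|^2+|F_1|^2)\,dxdt$ and Proposition~\ref{cnsl} applies. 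The first block \eqref{regularity_for_y_and_z} then follows by direct weight comparison: every left-hand integrand is dominated by the corresponding integrand of \eqref{regularidade_inicial_pesos_que_dependem_de_x_e_t} (e.g. $e^{2s\hat\beta}\hat\gamma^{-3}\le e^{2s\beta}\gamma^{-3}$, $e^{2s\hat\beta}\le e^{2s\beta}$ on the $z$-term, and $e^{2s\hat\beta}\hat\gamma^{-7}\le e^{2s\beta}\gamma^{-7}$ on $\omega_0$), while on the data side $\kappa(y_0,z_0,F_0,F_1)\le\tilde\kappa(y_0,z_0,F_0,F_1)$ by the same inequalities. The $L^{\infty}$-bound on $\widetilde h$ and the equalities $y(\cdot,T)=z(\cdot,T)=0$ are inherited verbatim.

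The genuinely new content is the regularity estimate \eqref{regularidade_em_H1}, which I would obtain by a weighted energy argument; here it is essential that $\hat\beta,\hat\gamma$ depend only on $t$. Setting $\theta(t):=e^{2s\hat\beta(t)}\hat\gamma(t)^{-5}$, I multiply the $y$-equation of \eqref{linear_1} by $\theta y$, the $z$-equation by $\theta z$, integrate over $\Omega$ and add. Since $\theta$ is independent of $x$, it factors out of the spatial integrals, so the Laplacian terms integrate by parts cleanly, with no $\nabla\theta\cdot\nabla y$ cross terms, and produce exactly $d\theta\int_{\Omega}(|\nabla y|^{2}+|\nabla z|^{2})$. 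This is precisely the step that would fail for genuine $(x,t)$-weights and is the reason for introducing the spatial extrema; integrating in time and taking the supremum over the upper endpoint then yields both the $\sup_{[0,T]}$ and the $\iint_{Q}$ terms on the left of \eqref{regularidade_em_H1}.

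The main obstacle is to absorb every right-hand contribution into $C\,\tilde\kappa$. The temporal derivative term is controlled because a short computation using $\hat\beta'=-\hat\beta\,m'/m$, $\hat\gamma'=-\hat\gamma\,m'/m$ gives $|\theta'|\le C\,e^{2s\hat\beta}\hat\gamma^{-3}$, so $\int_{0}^{T}|\theta'|\int_{\Omega}|y|^{2}$ is bounded by $\iint_{Q}e^{2s\hat\beta}\hat\gamma^{-3}|y|^{2}$, already estimated in \eqref{regularity_for_y_and_z}; the bounded zeroth-order coefficients (recall $u^{**},v^{**}$ smooth and bounded) and the coupling $c_1u^{**}z\,y$ are treated identically via $\hat\gamma^{-5}\le C\hat\gamma^{-3}$ and $\hat\gamma^{-5}\le C$. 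The source terms are split by Young's inequality, $\theta F_i\,w\le\tfrac{\varepsilon}{2}e^{2s\hat\beta}\hat\gamma^{-3}|w|^{2}+C_{\varepsilon}e^{2s\hat\beta}\hat\gamma^{-7}|F_i|^{2}$, the first piece absorbed by \eqref{regularity_for_y_and_z} and the second bounded by $C\,e^{2s\beta^{*}}(\gamma^{*})^{-3}|F_i|^{2}$ because $\hat\beta\le\beta^{*}$ and $\gamma^{*}\le\hat\gamma$ with $\gamma^{*}$ bounded below. The delicate term is $\int_{\omega_0}\theta|\widetilde h|^{2}$, for which comparing $\hat\gamma^{-5}$ with the available $\hat\gamma^{-7}$ goes the wrong way; instead I would invoke the $L^{\infty}$-bound, $\int_{\omega_0}\theta|\widetilde h|^{2}\le\Vert\widetilde h\Vert_{L^{\infty}(\omega_0\times(0,T))}^{2}\iint_{Q}e^{2s\hat\beta}\hat\gamma^{-5}\,dxdt$, the last integral being finite since $e^{2s\hat\beta}$ decays faster than any power of $\hat\gamma$ grows as $t\to T$. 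This reduces everything to $C\,\tilde\kappa$ and closes the estimate; the existence of a control adapted to these spatial-extrema weights is underpinned by the observability inequality of Proposition~\ref{Carleman_adoint}.
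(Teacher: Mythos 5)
Your overall architecture is sound and close to what the paper intends: the first block \eqref{regularity_for_y_and_z} does follow from Proposition~\ref{cnsl} by pure weight comparison ($\hat\beta\le\beta\le\beta^{*}$, $\gamma^{*}\le\gamma\le\hat\gamma$), and the $H^1$ estimate \eqref{regularidade_em_H1} is obtained by the same weighted energy computation that the paper carries out in \ref{Appendix} for \eqref{regularidade_a_mais_com_pesos_que_dependem_de_x_e_t} (your observation that the $t$-only weights $\hat\beta,\hat\gamma$ make the integration by parts clean is a genuine simplification over the $(x,t)$-weights used there). The treatment of the $\theta'$ term, the zeroth-order terms, and the sources $F_i$ is correct.

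There is, however, one step that fails as written: the control term. First, the quantity produced by the energy identity is $\int_{\omega_0}\theta\,\widetilde h\,y\,dx$ (the control enters the equation linearly and is paired with $y$), not $\int_{\omega_0}\theta|\widetilde h|^{2}dx$. Second, and more seriously, your bound for it is based on the claim that $\iint_{Q}e^{2s\hat\beta}\hat\gamma^{-5}\,dx\,dt<+\infty$ because ``$e^{2s\hat\beta}$ decays as $t\to T$.'' It does not: since $R>\Vert\eta\Vert_{L^{\infty}(\Omega)}$ one has $\hat\beta(t)=(e^{R\lambda}-e^{\lambda\Vert\eta\Vert_{L^{\infty}}})/m(t)>0$, so $\hat\beta\to+\infty$ and $e^{2s\hat\beta}$ blows up exponentially as $t\to T^{-}$, while $\hat\gamma^{-5}\sim m(t)^{5}$ only vanishes polynomially; the integral is $+\infty$. (It is $e^{-2s\beta}$, the Carleman weight for the adjoint, that decays; the state estimates carry the reciprocal weight precisely to force $y,z\to0$ at $t=T$.) The repair is the same Young splitting you already used for $F_i$: write $\theta\,\widetilde h\,y=(e^{s\hat\beta}\hat\gamma^{-3/2}y)(e^{s\hat\beta}\hat\gamma^{-7/2}\widetilde h)\le\tfrac{\varepsilon}{2}e^{2s\hat\beta}\hat\gamma^{-3}|y|^{2}+C_{\varepsilon}e^{2s\hat\beta}\hat\gamma^{-7}|\widetilde h|^{2}$ on $\omega_0$, and both resulting integrals are exactly the quantities already bounded by $C\,\tilde\kappa$ in \eqref{regularity_for_y_and_z}. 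This is precisely how the corresponding term is absorbed in the paper's Appendix. With that correction the argument closes.
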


\begin{remark}
A similar strategy may be employed when the internal multiplicative control, denoted as $\ddot{h}1_{\omega}v$, is applied exclusively to the second equation of \eqref{e1_2}. In this scenario, the internal control acts on the equation for \( v \) in \eqref{e1_sem_controle_na_fronteira}, and the procedure unfolds as follows:
  \begin{enumerate}
      \item In system \eqref{e3}, the control \( \ddot{h} 1_{\omega} (z + \bar{v}) \) would act on the second equation, resulting in the control \( \ddot{h} 1_{\omega} (z + v^{**}) \) on the \( z \)-equation of system \eqref{e4}.
    
    \item The control for the linear system takes the form \( \hat{h} 1_{\omega} := \ddot{h} 1_{\omega} v^{**} \), where \( v^{**} > 0 \) in \( \overline{\omega}_0 \).
    
    \item Since \( c_1 u^{**} \neq 0 \) in \( \overline{\omega}_0 \), an argument analogous to Proposition \ref{Carleman_inicial} holds.
    
    \item Consequently, we would also obtain a result analogous to Proposition \ref{cnsl}, ensuring the existence of a control \( \hat{h} \in L^{\infty}(\omega \times (0,T)) \) such that the associated solution \( (y,z) \) satisfies inequalities similar to \eqref{regularidade_inicial_pesos_que_dependem_de_x_e_t} and \eqref{regularidade_a_mais_com_pesos_que_dependem_de_x_e_t}.
    
    \item The subsequent propositions follow similarly, using the same reasoning.
  \end{enumerate}
\end{remark}

Once we have established the global null controllability of \eqref{linear_1}, we will define a Banach space that will encompass a reformulation of the null controllability problem for \eqref{e4}.
More specifically, we can reformulate the null controllability property for \eqref{e4} as abstract equations within appropriately chosen spaces of ``admissible" state controls. In particular, using the definitions applied to the equations and the admissible spaces we can show that such mappings are well-defined and \( C^1 \), and that their derivatives at the origin are surjective. This will allow us, by means of Lusternik's Inverse Mapping Theorem, to establish the local null controllability of the system under consideration. This invertibility result is a theorem in infinite-dimensional spaces that can be found, for example, in \cite{Alekseev}, and is stated below, where \( B_{r}(0) \) and \( B_{\delta}(\zeta_{0}) \) represent open balls with radii \( r \) and \( \delta \), respectively.

\begin{theorem}[Inverse Mapping Theorem]\label{Liusternik}
Let $ \mathcal{E}$ and $ \mathcal{Z}$ be Banach spaces and let $\mathcal{A}:B_{r}(0)\subset  \mathcal{E}\rightarrow  \mathcal{Z}$ be a $\mathcal{C}^{1}$ mapping. Let as assume that $\mathcal{A}^{\prime}(0)$ is onto and let us set $\mathcal{A}(0)=\zeta_{0}$. Then, there exist $\delta >0$, a mapping $W: B_{\delta}(\zeta_{0})\subset  \mathcal{Z}\rightarrow  \mathcal{E}$ and a constant $K>0$ such that
\begin{equation*}
\left\{\begin{array}{l}
    W(z)\in B_{r}(0)\ \ \text{and}\ \  \mathcal{A}(W(z))=z\,\, \, \, \forall\, z\in B_{\delta}(\zeta_{0}),\\
     \Vert W(z)\Vert_{ \mathcal{E}}\leq K\Vert z-\mathcal{A}(0)\Vert_{ \mathcal{Z}}\, \, \, \, \forall\, z\in B_{\delta}(\zeta_{0}).
    \end{array}\right.
\end{equation*}
In particular, $W$ is a local inverse-to-the-right of $\mathcal{A}$.
\end{theorem}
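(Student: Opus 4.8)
The plan is to prove this as the classical Graves/Lusternik surjective mapping theorem, via a Newton-type iteration built on a bounded right-inverse of the surjective derivative $L:=\mathcal{A}'(0)$. First I would reduce to a homogeneous problem by setting $f:=\mathcal{A}-\mathcal{A}(0)$, so that $f(0)=0$, $f$ is $\mathcal{C}^1$ on $B_r(0)$ with $f'(0)=L$, and solving $\mathcal{A}(e)=z$ becomes equivalent to solving $f(e)=w$ with $w:=z-\zeta_0$ small. Since $L\colon\mathcal{E}\to\mathcal{Z}$ is a bounded linear surjection between Banach spaces, the Open Mapping Theorem supplies a constant $M>0$ such that every $\zeta\in\mathcal{Z}$ admits a preimage $e\in\mathcal{E}$ with $Le=\zeta$ and $\Vert e\Vert_{\mathcal{E}}\le M\Vert\zeta\Vert_{\mathcal{Z}}$. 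This bounded (in general \emph{nonlinear}, selection-based) right-inverse replaces a linear right-inverse, which need not exist because $\ker L$ may fail to be complemented.

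Next I would exploit the $\mathcal{C}^1$ regularity to control the nonlinear remainder. By continuity of $\mathcal{A}'$ at the origin, I fix $\rho\in(0,r]$ so small that $\Vert\mathcal{A}'(e)-L\Vert\le\kappa$ whenever $\Vert e\Vert_{\mathcal{E}}\le\rho$, with $\kappa$ chosen so that $q:=\kappa M<1$ (say $q\le 1/2$). Since $B_\rho(0)$ is convex, the mean value inequality then yields $\Vert f(e_2)-f(e_1)-L(e_2-e_1)\Vert_{\mathcal{Z}}\le\kappa\,\Vert e_2-e_1\Vert_{\mathcal{E}}$ for all $e_1,e_2\in B_\rho(0)$.

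The iteration is then set up as follows. Starting from $e_0=0$, I define residuals $w_n:=w-f(e_n)$ and, at each step, select $\eta_n$ with $L\eta_n=w_n$ and $\Vert\eta_n\Vert\le M\Vert w_n\Vert$, putting $e_{n+1}:=e_n+\eta_n$. The telescoping identity $w_{n+1}=-\big(f(e_n+\eta_n)-f(e_n)-L\eta_n\big)$ combined with the remainder estimate gives $\Vert w_{n+1}\Vert\le\kappa\Vert\eta_n\Vert\le q\Vert w_n\Vert$, hence $\Vert w_n\Vert\le q^n\Vert w\Vert$ and $\Vert\eta_n\Vert\le M q^n\Vert w\Vert$. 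Thus $\sum_n\Vert\eta_n\Vert\le \tfrac{M}{1-q}\Vert w\Vert<\infty$, so $(e_n)$ is Cauchy in the complete space $\mathcal{E}$ and converges to some $e_\star$ with $\Vert e_\star\Vert\le\tfrac{M}{1-q}\Vert w\Vert$; continuity of $f$ transfers $w_n\to 0$ to $f(e_\star)=w$, i.e. $\mathcal{A}(e_\star)=z$. Choosing $\delta>0$ with $\tfrac{M}{1-q}\delta<\rho$ forces every iterate to stay in $B_\rho(0)\subset B_r(0)$, so that defining $W(z):=e_\star$ and $K:=M/(1-q)$ yields $W(z)\in B_r(0)$, $\mathcal{A}(W(z))=z$, and $\Vert W(z)\Vert_{\mathcal{E}}\le K\Vert z-\mathcal{A}(0)\Vert_{\mathcal{Z}}$ for all $z\in B_\delta(\zeta_0)$, which is exactly the claimed right-inverse.

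The main obstacle — and the reason a naive contraction with a single linear right-inverse does not apply — is precisely that $L$ need not admit a bounded linear right-inverse in general Banach spaces. The scheme circumvents this by re-selecting $\eta_n$ at every stage through the Open Mapping bound rather than applying a fixed operator; the price is that $W$ is merely a right-inverse selection, not necessarily linear or continuous, but it does satisfy the two required estimates. The accompanying technical point is to keep all iterates inside the ball $B_\rho(0)$ on which the $\kappa$-Lipschitz remainder estimate holds, which is guaranteed by the geometric (hence summable) decay of the corrections together with the smallness of $\delta$.
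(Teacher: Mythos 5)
Your argument is correct: it is the classical Newton--Graves iteration proof of the Lyusternik/Graves surjective mapping theorem, using the Open Mapping Theorem to extract a (generally nonlinear) bounded right-inverse selection for $L=\mathcal{A}'(0)$ and the mean value inequality on a small ball to make the residuals contract geometrically. The paper itself does not prove this statement; it quotes it as a known result and refers to the reference [Alekseev--Tikhomirov--Fomin], where essentially the same iteration scheme is used, so your proposal simply supplies the omitted standard proof. Two small points of housekeeping, neither of which is a gap: take $\rho$ strictly less than $r$ so that the closed ball on which you invoke $\Vert\mathcal{A}'(e)-L\Vert\le\kappa$ lies inside the open domain $B_r(0)$, and note that the estimates $\Vert w_n\Vert\le q^n\Vert w\Vert$ and the containment $e_n,e_{n+1}\in B_\rho(0)$ must be established by a single joint induction, since each relies on the other at the previous step; your closing remark about the summable decay already indicates you are aware of this.
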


For more applications of these ideas in parabolic equations, see, for instance, \cite{CarvalhoLimaco}, \cite{CARALIMACOMENEZES} and \cite{CARALIMACOTHAMSTENMENEZES}.
  
\section{Proof of Theorem \ref{MT3}}\label{Sec4}
In this section, we prove Theorem \ref{MT3}. The argument is divided into two steps. First, under specific conditions on the parameters and considering the internal control identically null, we show that the asymptotic stabilization result for the target \( (u^{**}, v^{**}) \) is achieved. Second, we establish the local null controllability result using the Inverse Mapping Theorem (Theorem \ref{Liusternik}). The strategy is summarized in Figure \ref{des}.

A key point in the second step is that the target, which must be reached in finite-time, is strictly positive in \( \omega_0 \). Additionally, we make essential use of the regularity properties of the solutions to the linear system \eqref{linear_1} and the null control \( \widetilde{h} \) constructed in Proposition \ref{control_of_SL}. We conclude with a brief explanation of how these two steps together establish the theorem.

Due to the biological nature considered, we will provide, after the proof of the Theorem, a detailed analysis of how the result does not violate the constraints assumed in \eqref{CH} and \eqref{CUV}.

Before presenting the proof of the main theorem of this section, we state the well-known result regarding the existence of a positive solution to the logistic equation with diffusion (see \cite{PAY}).
\begin{lemma}\label{lemp}If $\alpha>\lambda_1$ and $\beta>0$, then  there exists a unique smooth function $z$ such that
\begin{equation}\label{EZP}
\left\{\begin{array}{l}
\Delta z(x)+z(x)(\alpha-\beta z(x))=0,\ \  x\in \Omega \\
z(x)=0,\ \ x\in\partial\Omega\\
z(x)>0,\ \  x\in \Omega.
\end{array}\right.
\end{equation}
\end{lemma}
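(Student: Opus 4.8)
The plan is to obtain existence through the classical method of sub- and super-solutions and then to derive uniqueness from the strict sublinearity of the reaction term. Since the nonlinearity $f(z)=z(\alpha-\beta z)$ is smooth, any solution produced will be smooth by elliptic bootstrapping, so the substance of the argument lies in constructing an ordered pair of sub/super-solutions and in the uniqueness step.

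First I would construct a supersolution by taking the constant $\bar z\equiv \alpha/\beta$: then $\Delta\bar z+\bar z(\alpha-\beta\bar z)=0$ in $\Omega$ and $\bar z>0$ on $\partial\Omega$, so $\bar z$ is a supersolution. For the subsolution I would use the principal eigenpair $(\lambda_1,\phi)$ of \eqref{LA}, with $\phi>0$ in $\Omega$, and set $\underline z=\epsilon\phi$. A direct computation gives
\[
\Delta \underline{z} + \underline{z}(\alpha - \beta\underline{z}) = \epsilon\phi\,(\alpha - \lambda_1 - \beta\epsilon\phi),
\]
which is nonnegative in $\Omega$ once $\epsilon>0$ is small enough, precisely because $\alpha-\lambda_1>0$ by hypothesis and $\phi$ is bounded; shrinking $\epsilon$ further guarantees $\underline z\leq\bar z$. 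Monotone iteration between $\underline z$ and $\bar z$ then yields a solution $z$ with $\epsilon\phi\leq z\leq\alpha/\beta$, so in particular $z>0$ in $\Omega$, and elliptic regularity gives the required smoothness. The assumption $\alpha>\lambda_1$ enters exactly here: it is what permits a strictly positive subsolution, hence a nontrivial positive solution.

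For uniqueness I would exploit that $z\mapsto f(z)/z=\alpha-\beta z$ is strictly decreasing on $(0,\infty)$. Given two positive solutions $z_1,z_2$, the Hopf boundary lemma ensures each behaves like $\mathrm{dist}(x,\partial\Omega)$ near $\partial\Omega$, so the ratios $z_1/z_2$ and $z_2/z_1$ stay bounded and the test functions $(z_1^2-z_2^2)/z_i$ are admissible. Testing the equation for $z_i$ against these and adding, a Picone-type identity bounds the resulting quadratic form below by $0$, while the reaction contribution equals
\[
\int_\Omega \Big(\tfrac{f(z_1)}{z_1} - \tfrac{f(z_2)}{z_2}\Big)(z_1^2 - z_2^2)\,dx = -\beta\int_\Omega (z_1 - z_2)^2(z_1 + z_2)\,dx \leq 0.
\]
Since the left-hand side of the identity is $\geq 0$ and simultaneously equals this nonpositive quantity, both must vanish, and as $z_1+z_2>0$ in $\Omega$ this forces $z_1=z_2$.

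I expect the uniqueness step to be the main obstacle: the algebra of the Picone identity is routine, but the admissibility of the test functions up to the boundary requires care and relies on the Hopf lemma together with the regularity of $\partial\Omega$. An alternative, purely comparison-based route, sweeping the ordered subsolution $\epsilon\phi$ upward (or sliding $\bar z$ down) via the strong maximum principle, sidesteps the boundary-integrability issue at the cost of a more delicate limiting argument; in either approach the strict monotonicity of $\alpha-\beta z$ is the structural feature that makes uniqueness possible.
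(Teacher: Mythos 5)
Your argument is correct, but it is worth noting that the paper does not actually prove Lemma \ref{lemp}: it is quoted as a well-known result on the diffusive logistic equation with a citation to \cite{PAY}, so there is no in-paper proof to compare against. Your self-contained route is the standard one and both halves check out. For existence, the constant $\bar z\equiv\alpha/\beta$ is an exact (hence admissible) supersolution, and the computation $\Delta(\epsilon\phi)+\epsilon\phi(\alpha-\beta\epsilon\phi)=\epsilon\phi(\alpha-\lambda_1-\beta\epsilon\phi)$ correctly isolates where the hypothesis $\alpha>\lambda_1$ is used; monotone iteration between the ordered pair then gives a solution pinched above $\epsilon\phi$, whence positivity, and bootstrapping gives smoothness. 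For uniqueness, the Brezis--Oswald/Picone computation is right: the gradient form reduces to $\left|\nabla z_1-\tfrac{z_1}{z_2}\nabla z_2\right|^2+\left|\nabla z_2-\tfrac{z_2}{z_1}\nabla z_1\right|^2\geq 0$ while the reaction side equals $-\beta\int_\Omega(z_1-z_2)^2(z_1+z_2)\,dx\leq 0$, and you correctly flag that the admissibility of $(z_1^2-z_2^2)/z_i$ up to $\partial\Omega$ is the only delicate point, resolved by Hopf's lemma giving $z_i\sim\mathrm{dist}(\cdot,\partial\Omega)$. The one cosmetic gap is that "Hopf boundary lemma" deserves a half-sentence of justification (each $z_i$ is a nonnegative, not identically zero solution of $-\Delta z_i=c_i(x)z_i$ with bounded $c_i=\alpha-\beta z_i$, so the strong maximum principle and Hopf apply), but this is routine. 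In short: a correct and complete proof of a statement the paper leaves to the literature.
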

\begin{proof}[Proof of Theorem \ref{MT3}]
{\bf Step 1: an asymptotic controllability result.} In this step our goal is to apply  some results of \cite{COS} related to the stability of a steady state solution of \eqref{e1_2} with $h\equiv 0$ and with zero Dirichlet conditions. So, here, under our assumptions, we consider the controls $(c_u,c_v)\equiv (0,0)$ and it suffices to prove the existence and uniqueness of the heterogeneous coexistence state $(u^{**},v^{**})$ for \eqref{e1_2}  under zero Dirichlet boundary conditions and $h\equiv 0$.

The existence is obtained due Lemma \ref{lemp} and our condition $d<a/\lambda_1$. Indeed, we obtain a function \( \theta \) that is a solution of \eqref{ET}, and a simple computation shows that \( (u^{**}, v^{**}) \) is a steady state of \eqref{e1_2}  under zero Dirichlet boundary conditions and $h\equiv 0$. The proof of uniqueness follows exactly the steps presented in Theorem 3.2 and Theorem 3.3 of \cite{COS} (see also Theorem 5 in \cite{SONEGOZUAZUA}, where a one-dimensional case  in a weak competition regime was addressed).

  {\bf Step 2: a local controllability result.} In order to show that the target $(u^{**},v^{**})$ is reached locally in finite-time, let us define a map $\mathcal{A}$ over control spaces of ``admissible'' states $\mathcal{E}$ and $\mathcal{Z}$. Let $\mathcal{E}$ be the space of functions
  \begin{equation*}
      \begin{array}{l}
\mathcal{E}:=\Big\{ (y,z,\widetilde{h}): \widetilde{h}\in L^{\infty}(\omega_0\times (0,T)),  \displaystyle\iint_{\omega_0\times (0,T)}e^{2s{\hat{\beta}}}{\hat{\gamma}}^{-7}\vert \widetilde{h}\vert^{2}dxdt< +\infty,\\
y, z, \partial_{i}y, \partial_{i}z, 

 y_t-d_1\Delta y, z_t-d_2\Delta z \in L^{2}(Q),\\
  \displaystyle\iint_{Q}e^{2s{\hat{\beta}}}\left({\hat{\gamma}}^{-3}\vert y\vert^{2} + \vert z \vert^{2}\right)dxdt< +\infty,\\

\text{for}\,\, F_0 =  {y}_t - d\Delta {y} - {y}(a-2b_1u^{**}- c_1 v^{**}) + c_1u^{**} z  - {\widetilde{h}}1_{\omega_0}\\

\text{and} \,\,\, F_{1} = {z}_t - d\Delta {z} - {z}(a - b_2 u^{**} - 2c_2v^{**} ) + b_2v^{**} y , \\

\displaystyle\iint_{Q}e^{2s\beta^{*}}(\gamma^{*})^{-3}\left(\vert F_{0}\vert^{2} + \vert F_1\vert^{2} \right)dxdt < + \infty,\,\, y(.,t)=z(.,t)=0\,\, \text{on}\,\, \Sigma,\\
 (y(.,0), z(.,0))\in L^{\infty}(\Omega)\times L^{\infty}(\Omega)\Big\},       
      \end{array}
  \end{equation*}
which is a Banach space for the norm $\Vert .\Vert_{ {\mathcal{E}}}$, with
\begin{equation*}
    \begin{array}{l}
    \Vert (y,z,\widetilde{h})\Vert^{2}_{\mathcal{E}} :=  \displaystyle\iint_{Q}e^{2s{\hat{\beta}}}\left({\hat{\gamma}}^{-3}\vert y\vert^{2} + \vert z \vert^{2}\right)dxdt + \displaystyle\iint_{\omega_0\times (0,T)}e^{2s{\hat{\beta}}}{\hat{\gamma}}^{-7}\vert \widetilde{h}\vert^{2}dxdt  \\
     + \displaystyle\iint_{Q}e^{2s{\beta^{*}}}(\gamma^{*})^{-3}\left(\vert F_{0}\vert^{2} + \vert F_1\vert^{2} \right)dxdt
   + \Vert y(.,0)\Vert^{2}_{L^{2}(\Omega)} + \Vert z(.,0)\Vert^{2}_{L^{2}(\Omega)}.
    \end{array}
\end{equation*}
Recall that $\widetilde{h} 1_{\omega_0} = h 1_{\omega_0} u^{**}$, and $u^{**} > 0$ in $\overline{\omega}_0$. Furthermore, note that an element $(y, z, \widetilde{h}) $ of $\mathcal{E}$ satisfies $y(x, T) = z(x, T) = 0$ in $\Omega$.

Also, let us introduce the Banach space $\mathcal{Z}:=  \mathcal{U}\times \mathcal{U}\times L^{2}(\Omega)\times L^{2}(\Omega)$, where
\begin{equation*}
    \begin{array}{l}
    \mathcal{U}:=\left\{ f\in L^{2}(Q) : \displaystyle\iint_{Q} e^{2s\beta^{*}}(\gamma^{*})^{-3}\vert f\vert^{2}dxdt < +\infty \right\}
    \end{array}
\end{equation*}
with the norm
$$   \Vert(f,\widetilde{f},g,\widetilde{g})\Vert_{\mathcal{Z}}^{2}:= \Vert f\Vert^{2}_{\mathcal{U}} +  \Vert \widetilde{f}\Vert^{2}_{\mathcal{U}} +  \Vert g\Vert^{2}_{{L^{2}(\Omega)}} +  \Vert \widetilde{g}\Vert^{2}_{{L^{2}(\Omega)}},$$ where $$\Vert f\Vert^{2}_{\mathcal{U}}:=\displaystyle\iint_{Q} e^{2s\beta^{*}}(\gamma^{*})^{-3}\vert f\vert^{2}dxdt.    
$$
Consider the map $\mathcal{A}:\mathcal{E}\rightarrow\mathcal{Z}$ such that
\begin{equation}\label{Mapa_A}
\begin{array}{c}
    \mathcal{A}(y,z,\widetilde{h}) 
    :=
    \Big( {y}_t - d\Delta {y} - {y}(a-b_1(2{u}^{**} + y) - c_1z - c_1 {v}^{**}) + c_1{u}^{**} z\\
    - \widetilde{h}1_{\omega_0}(\frac{y}{u^{**}}+1),
       {z}_t -d\Delta {z} - {z}(a-c_2(2{v}^{**} + z) - b_2{y}- b_2 {u}^{**}) + b_2{v}^{**} y,\\
        y(.,0),\, z(.,0) \Big).
    \end{array}
\end{equation}

Thus, by applying Proposition \ref{control_of_SL} and utilizing arguments analogous to those presented in \cite{Clark} (see Section 3), it can be demonstrated that the mapping $\mathcal{A}$, defined between the spaces $\mathcal{E}$ and $\mathcal{Z}$, satisfies the hypotheses of Theorem \ref{Liusternik}. A detailed proof is provided in  \ref{Appendix_B}.

Therefore, exists $\delta>0$ and a mapping $W:B_{\delta}(0)\subset {\mathcal{Z}}\rightarrow {\mathcal{E}}$ such that
\begin{equation*}
    W(z)\in B_{r}(0)\,\,\, \text{and}\,\,\, {\mathcal{A}}(W(z))=z, \,\,\, \forall z\in B_{\delta}(0).
\end{equation*}
In particular, taking $(0,0,y_{0},z_{0})\in B_{\delta}(0)$ and $(y,z,\widetilde{h})=W(0,0,y_{0},z_{0})\in {\mathcal{E}}$ we have
\begin{equation}\label{hfg}
    {\mathcal{A}}(y,z,\widetilde{h})=(0,0,y_{0},z_{0}),
\end{equation}
from which we infer that \eqref{e4} is locally null controllable at time $T > 0$ and, consequently, by the construction made in Subsection \ref{subsection_linear_system} we get the finite-time controllability on the target $(u^{**},v^{**})$. More precisely,
\[
u(x,T) = y(x,T) + u^{**} = u^{**}\,\,\, \text{and}\,\,\, v(x,T)=z(x,T) + v^{**} = v^{**}\,\,\, \text{in}\,\,\, \Omega.
\]
The {\it Step 2} is complete.

Finally, we are in a position to use the steps outlined above to complete the proof of the theorem. Consider an initial condition $(u_0,v_0)$ satisfying $0\leq u_0\leq a/b_1$, $0\leq v_0\leq a/c_2$. Given $\widetilde{T}>0$ and $\omega\subset\Omega$, by the {\it Step 2} there is $\epsilon> 0$ such that \eqref{e1_2} is locally controllable in finite-time towards $(u^{**},v^{**})$ assuming $c_u=c_v=0$ and some interior control $h_1\in L^{\infty}(\omega\times (0,\widetilde{T}))$. On the other hand, by the {\it Step 1} taking boundary controls $c_u=c_v=0$ and an interior control $h_2$ ($\equiv 0$) such that the respective solution $(u,v)$ of \eqref{e1_2} with initial condition $(u_0,v_0)$  satisfies
$$\Vert {u}(x,T_1) - u^{**}\Vert_{L^{\infty}(\Omega)}  + \Vert {v}(x,T_1) - v^{**}\Vert_{L^{\infty}(\Omega)}\leq \epsilon,$$ 
for some $T_1>0$. As a consequence, starting from time \( T_1 \), we apply the interior control \( h_1 \) over the interval \( (T_1, T_1 + \widetilde{T}) \), ensuring that the solution reaches exactly the target state $(u^{**}, v^{**})$ at the final time $T := T_1 + \widetilde{T}$, i.e,
$$u(x,T)=u^{**} \mbox{ and } v(x,T)=v^{**}.$$
In this case, the control strategy is defined as follows:
$$(c_u(x,t),c_v(x,t))=
(0,0),\quad (x,t)\in\partial\Omega\times (0,T)
$$
and
$$h(x,t)1_{\omega}=\left\{\begin{array}{ll}
0& (x,t)\in\Omega\times (0,T_1)\\
h_1(x,t)1_{\omega}& (x,t)\in\Omega\times (T_1,T).
\end{array}\right.$$
The theorem is proved. 
\end{proof}

\begin{figure}[h!]
        \centering
         \includegraphics[width=0.7\linewidth]{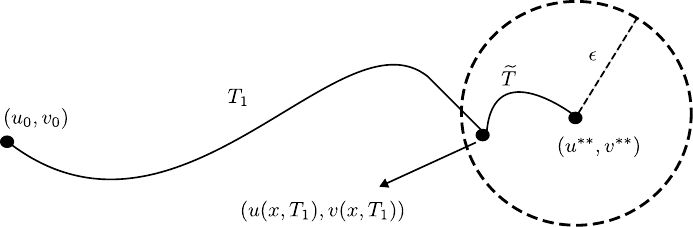}
   \captionsetup{font=small}
    \caption{Controlled trajectory reaching exactly the target $(u^{**},v^{**})$ at time $T=T_1+\widetilde{T}$.}
    \label{des}
\end{figure}

\begin{remark}Due to the symmetry in this case, the interior control action can be applied to either the first or the second equation.

Additional comments on Theorem \ref{MT3} are as follows.
    \begin{itemize}
        \item {\textbf{On the constraints.}}
\begin{enumerate}
    \item It is evident that \eqref{CUV} always holds since, in this case, we have $c_u\equiv c_v\equiv 0$;

\item The constraint \eqref{CH} requires a bit more attention. Indeed, since the boundary controls satisfy $c_u\equiv c_v\equiv 0$ and $u^{**}(x,t)=v^{**}(x,t)=0$ for $x\in\partial\Omega$, some trajectories could become negative near the boundary, given that we know that finite-time controllability requires oscillations of the trajectories around the target. However, this does not occur, and this is precisely where the estimate obtained in \eqref{regularity_for_y_and_z} plays a fundamental role.

More specifically, in the final arguments of the proof of Theorem \ref{MT3}, we can choose $T_1$ sufficiently large such
$$\Vert {u}(x,T_1) - u^{**}\Vert_{L^{\infty}(\Omega)}  + \Vert {v}(x,T_1) - v^{**}\Vert_{L^{\infty}(\Omega)}< a/\widetilde{C},$$ 
where $\widetilde{C}$ is the constant that appears in \eqref{regularidade_inicial_pesos_que_dependem_de_x_e_t}, which depends on $\widetilde{T}$ (see proof of Theorem \ref{MT3}), $\Omega$, $\omega$, and the coefficients of the system: $d$, $a$, $b_i$, $c_i$ ($i=1,2$). Now, by \eqref{hfg}, we observe that inequality \eqref{regularidade_inicial_pesos_que_dependem_de_x_e_t} for the control $h$  in \eqref{e1_2} holds with $F_0=F_1=0$, i.e.
  $$\Vert h\Vert_{L^{\infty}(\omega\times (0,T))}\leq\widetilde{C}(\Vert {u}(x,T_1) - u^{**}\Vert_{L^{\infty}(\Omega)}  + \Vert {v}(x,T_1) - v^{**}\Vert_{L^{\infty}(\Omega)})$$
and then
\begin{equation*}
\Vert h\Vert_{L^{\infty}(\omega\times (0,T))}< a.
\end{equation*}
Define $A=
\displaystyle\esssup_{(x,t) \in \Omega\times (0,T]} (a+h(x,t)1_{\omega})
$ (recall that $T=T_1+\widetilde{T}$) and note that $A> 0$. Now, if we consider
$$(\widetilde{u},\widetilde{v})=(A/b_1,a/c_2)\mbox{ and } (\undertilde{u},\undertilde{v})=(0,0)$$
we can use Theorem \ref{cos}  to conclude that the trajectories of \eqref{e1_2}  satisfy
$$0\leq u(x,t)\leq A/b_1,\ \ 0\leq v(x,t)\leq a/c_2$$
for all $(x,t)\in\Omega\times (0,T]$. Accordingly, it can be observed that:
\begin{enumerate}
\item The above inequalities ensure that the trajectories do not become negative. This conclusion is fundamental since the target $(u^{**},v^{**})$ vanishes at the boundary and, therefore, the oscillatory profile of the trajectories that  reach the target in finite-time could become negative at some point. The conclusions above show that this does not happen.

\item When $A > a$, the function $u(x,t)$ may exceed its upper bound. Biologically, this is not necessarily an issue, as interfering with the system through internal control naturally adjusts the species' carrying capacities. However, due to the way our control strategy was designed, this does not necessarily have to happen. We can choose $T_1$ sufficiently large so that $u(x,T_1)$ is close enough to $u^{**}$, making it possible to reach the target in finite-time with a very small $h$. Since $u^{**} < a$, using standard arguments on the continuous dependence of the system's coefficients, it is possible to prove that $u(x,t) < a$ for all $(x,t) \in \Omega \times (0,T]$.\\
\end{enumerate}
\end{enumerate}
\end{itemize}
\begin{itemize}  
\item {\textbf{Biological standpoint.}} 
\begin{enumerate}
\item Theorem \ref{MT3} addresses the controllability of a heterogeneous coexistence case \( (u^{**}, v^{**}) \), where the populations of both species \( u \) and \( v \) on the boundary are zero. In this context, we assume that the species shares the same diffusion capacity \( d \) and the same intrinsic growth rate \( a \). The strategy of maintaining \( c_u \equiv 0 \) and \( c_v \equiv 0 \) proves effective when the condition in \eqref{h12} is satisfied.

\item In contrast to Theorem \ref{MT1}, the characteristics of the target \( (u^{**}, v^{**}) \) and the proposed control strategy suggest one of the following conditions:
\begin{itemize}
    \item Low diffusion capacity of the species (i.e., \( d \) sufficiently small);
    \item High intrinsic growth rate (i.e., \textbf{a} sufficiently large);
    \item A domain \( \Omega \) with a large inradius (i.e., \( \lambda_1 \) sufficiently small).
\end{itemize}
\item The conditions presented in \text{(2)} have a natural biological interpretation: while sufficiently large  \textbf{a}  corresponds to a persistent search for a coexistence state, sufficiently small $d$ or $\lambda_1$ reduce the impact of null static control on the boundary.
\end{enumerate}
\end{itemize}
\end{remark}

\section{Homogeneous Coexistence and Extinction}\label{HCE}
We devote this section to a discussion of the steady states corresponding to homogeneous coexistence $(u^*, v^*)$ and extinction $(0,0)$.

\subsection{On the target \texorpdfstring{$(u^*,v^*)$}{(u^*,v^*)}}\label{sub_u_v}
In \cite{SONEGOZUAZUA}, the authors considered a Lotka-Volterra problem under a weak competition regime, specifically: $a_1, b_1, c_2 = 1$ and $0 < c_1, b_2 < 1$. Among other results, it was proven that the target corresponding to the homogeneous coexistence steady state, also denoted by $(u^*, v^*)$, can be reached in finite-time using only boundary controls. Under certain conditions on the parameters,
the strategy consists of approximating the target via Neumann controls and then applying a finite-time local  controllability result available in \cite{PZ}, for instance. This approach is feasible in this case because $u^*$ and $v^*$ are far from the values that constrain the controls and the state, unlike what happens with other targets. Indeed, the result of finite-time local controllability requires that the boundary controls oscillate locally above and below the target, thus potentially leaving their constraint intervals.

\subsection{On the target \texorpdfstring{$(0,0)$}{(0,0)}}\label{sub_0_0}
First, we must observe that the target $(0,0)$ is asymptotically controllable using only the constraints boundary controls $c_u$ and $c_v$, provided that the two inequalities below are satisfied

\begin{equation}\label{dd}d_1>{a_1}/{\lambda_1}\mbox{ and }d_2>{a_2}/{\lambda_1}.\end{equation}

Indeed, we can use arguments analogous to those employed in the proof of Theorem \ref{MT1}. This was done in \cite{SONEGOZUAZUA}, where a weak competition model was considered. If only one of the inequalities above is not satisfied, then it is possible to use an interior multiplicative control such that asymptotic stabilization towards $(0,0)$ occurs. For example, if $d_1 \leq a_1 / \lambda_1$, then we assume an internal control $h \equiv \sigma$ in \eqref{e1} such that  
$$-a_1 < \sigma < d_1 \lambda_1 - a_1$$
in order to proceed as in the proof of Theorem \ref{MT1}. On the other hand, if both inequalities in \eqref{dd} are not satisfied, then a multiplicative interior control in only one of the equations is not sufficient to steer the trajectories towards the target $(0,0)$. Obviously, this is possible by considering controls acting on both equations, which can also be studied but is not the aim of the present work.

\section{Numerical Simulations}\label{NS}

This section is dedicated to performing numerical simulations inspired by the theoretical results obtained above.

As we have seen, the dynamics of the problem considered here allow for the asymptotic stabilization of the target states $(0, a_2/c_2)$ and $(a_1/b_1, 0)$ through a combination of constrained boundary controls and a bounded interior multiplicative control.

We simulate the effect of the multiplicative control $h$ in driving the trajectories across barrier functions for the targets $(0,a_2/c_2)$ and $(a_1/b_1,0)$. For the target $(u^{**}, v^{**})$, the boundary controls guide the trajectories close to the target, while the internal multiplicative control $h$ ensures that the trajectory reaches the target exactly at time $T$. In this case, we simulate the optimal control with respect to the minimum time and compare the results with and without activating $h$.

In addition to visualizing the behavior of the trajectories and controls, our goal is also to observe that, as expected, the internal control $h$ can also contribute to the speed at which the trajectories can approaches the target.

The simulations were conducted using the software \textit{Matlab} and the package \textit{Casadi}.

For simplification and better understanding of the simulations, we chose a one-dimensional domain $\Omega=(0,L)\subset\mathbb{R}$. In this case, it is possible to explicitly determine the value of $\lambda_1$ in the estimate \eqref{h12}, namely, $\lambda_1=\pi^2/L^2$.

\subsection{Crossing barriers}
Here, we consider the following parameter values: $c_1 = 0.8$, $c_2 = 0.7$, $a_1 = a_2 = b_1 = b_2 = 1$, $d_1 = d_2 = 0.1$, and $L = 10$. In this case, it is possible to prove that a barrier exists for the target $(a_1/b_1,0)=(1,0)$ if no internal control is applied. The barrier is a nontrivial solution of \eqref{SPN} (with $a=1$), and its existence was proven in \cite{SONEGOZUAZUA}.  

Figure \ref{uvb} shows the barriers (black dashed lines) preventing the trajectories from approaching the target. Note that, in this case, we set the boundary values as $c_u=1$ and $c_v=0$, and the same would hold for any other values satisfying the assumed constraints.  

\begin{figure}[h!]
        \centering
         \includegraphics[width=1\linewidth]{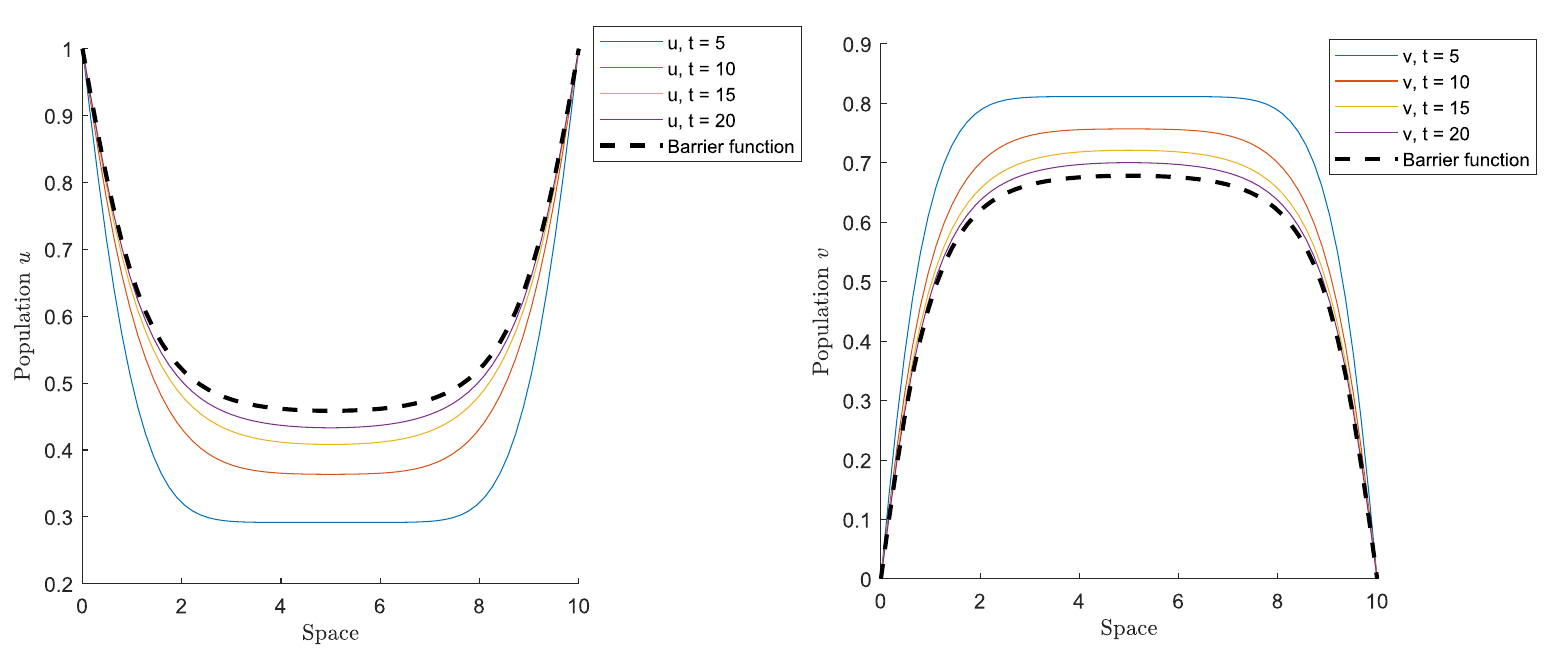}
   \captionsetup{font=small}
    \caption{Barriers preventing the trajectories from approaching the target $(a_1/b_1,0)=(1,0)$.}
    \label{uvb}
\end{figure}

By activating a multiplicative control in the interior of the domain, we can see in Figure \ref{uvbh} the trajectories crossing the barrier and approaching the target $(1,0)$. In this case, the control acted in the equation for $v$ (see \eqref{e2s}), and in this simulation, we set $\overline{h}=-0.8$.  

\begin{figure}[h!]
        \centering
         \includegraphics[width=1\linewidth]{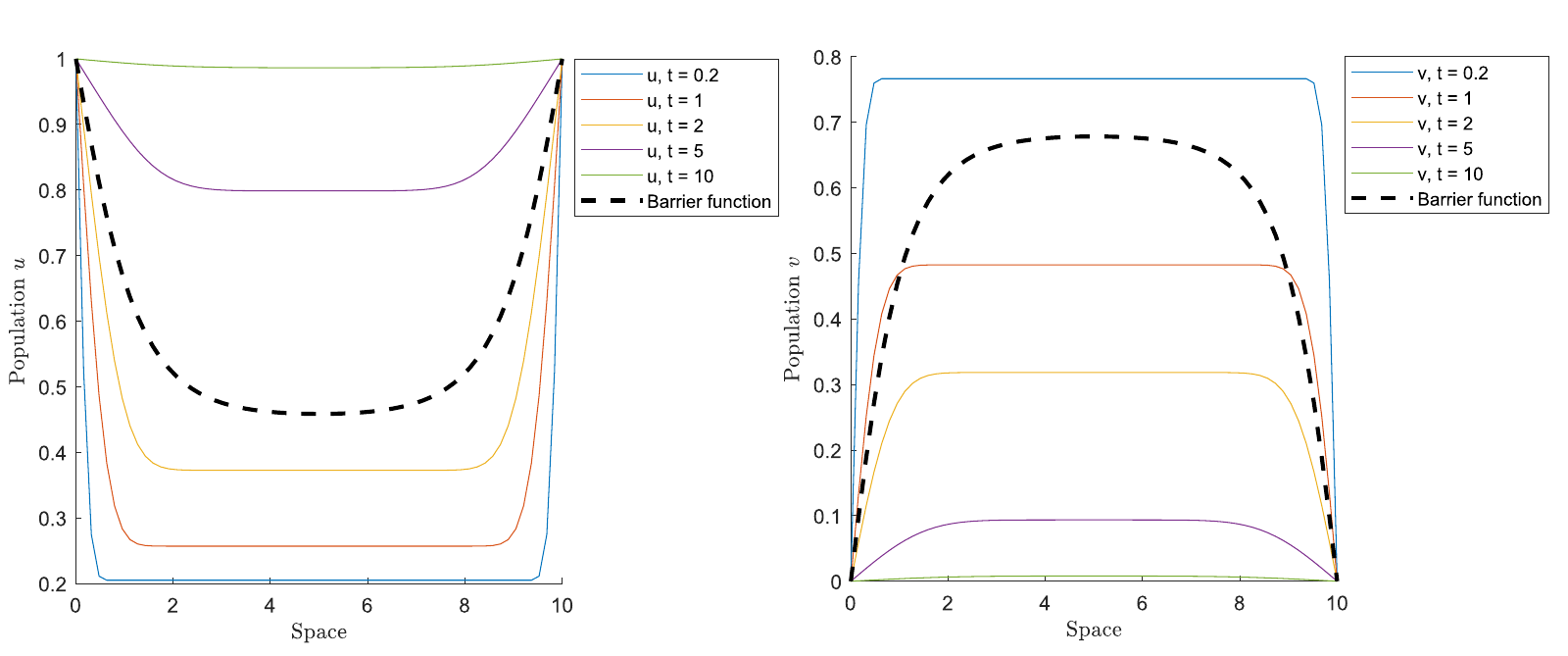}
   \captionsetup{font=small}
    \caption{Trajectories crossing the barrier under the action of the internal control $\overline{h}$.
}
    \label{uvbh}
\end{figure}

\subsection{Optimal control - minimum time}
In what follows, we present simulations involving the target $(u^{**},v^{**})$ (see \eqref{uev}). Here, we assume 
$d_1 = d_2=d=1$, 
$a_1 = a_2=a= 10$, $b_1 = 1.8$, $c_1 = 0.2$, 
 $b_2 = 1$,  $c_2 = 1.4$ and $L=1$. 
Moreover, we assume the  initial condition $(u_0,v_0)=(0.2,0.3)$ and, to simplify the simulation, we considered \( \omega = (0,L) = (0,1) \). Under these assumptions, inequality \eqref{h12} is satisfied as well as the other conditions of Theorem \ref{MT3}. With the parameters adopted above, the target $(u^{**},v^{**})$ can be observed in Figure \ref{uv2}.

\begin{figure}[h!]
        \centering
         \includegraphics[width=0.6\linewidth]{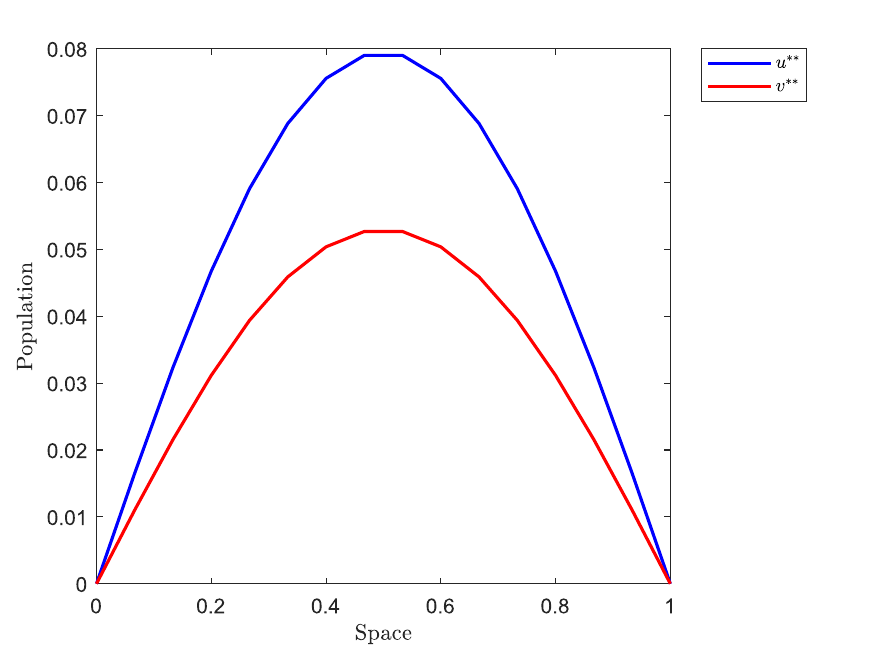}
   \captionsetup{font=small}
    \caption{The target $(u^{**},v^{**})$.}
    \label{uv2}
\end{figure}

We are interested in the controls $c_u$, $c_v$, and $h$ that drive the trajectory to reach the target (with a tolerance of $10^{-2}$) starting from the initial condition $(u_0, v_0) = (0.2, 0.3)$. We recall that constraints must be assumed on the controls $c_u$ and $c_v$ (see \eqref{CUV}), and $h$ must be bounded. In our example, we again assume $|h| \leq 1$. The minimum time obtained was $t = 1.8055$, and Figures \ref{uh4} and \ref{vh4} display the behaviors of $u$ and $v$, respectively.
\begin{figure}[h!]
        \centering
         \includegraphics[width=0.8\linewidth]{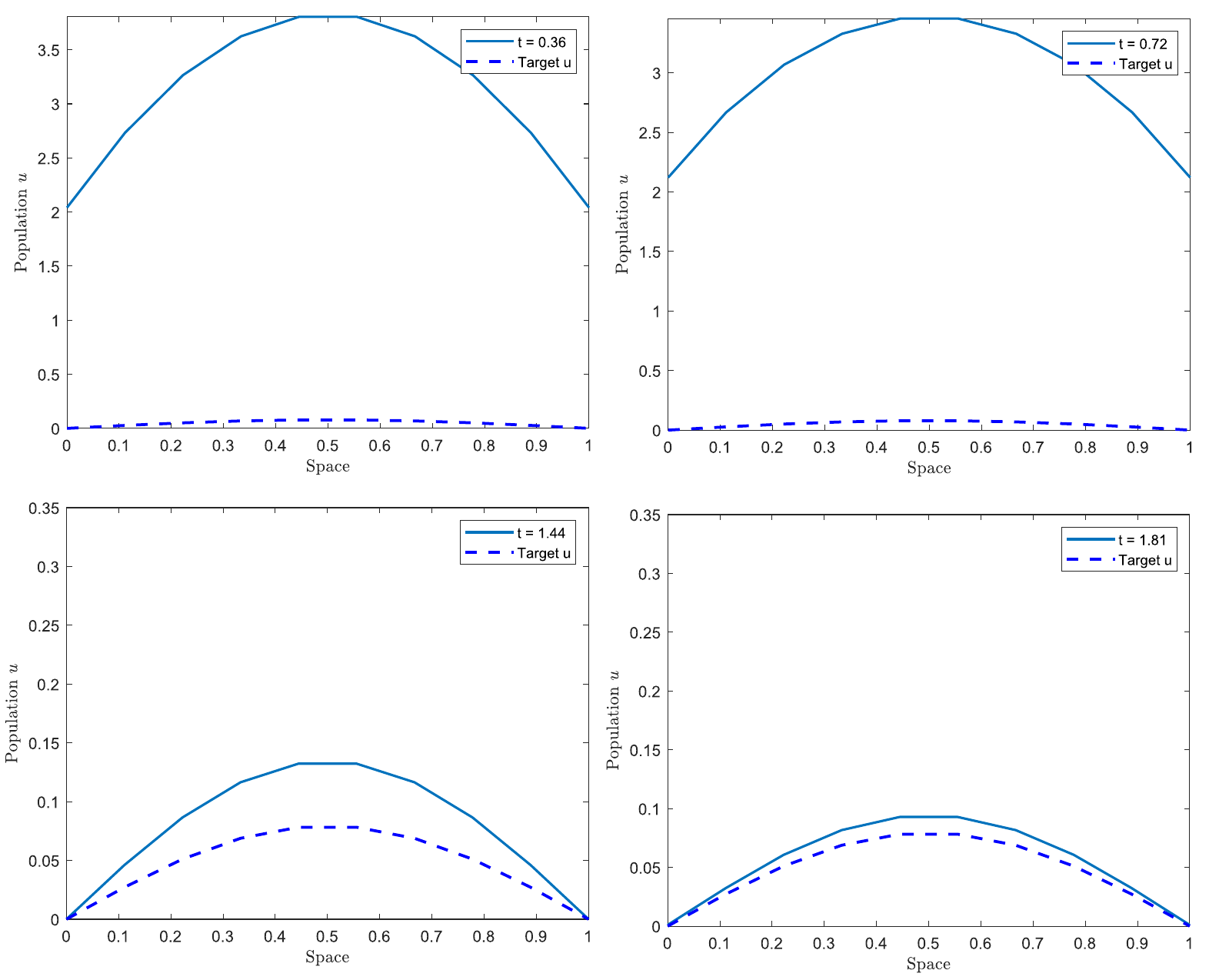}
   \captionsetup{font=small}
    \caption{Trajectories approaching $u^{**}.$
}
    \label{uh4}
\end{figure}

\begin{figure}[h!]
        \centering
         \includegraphics[width=0.8\linewidth]{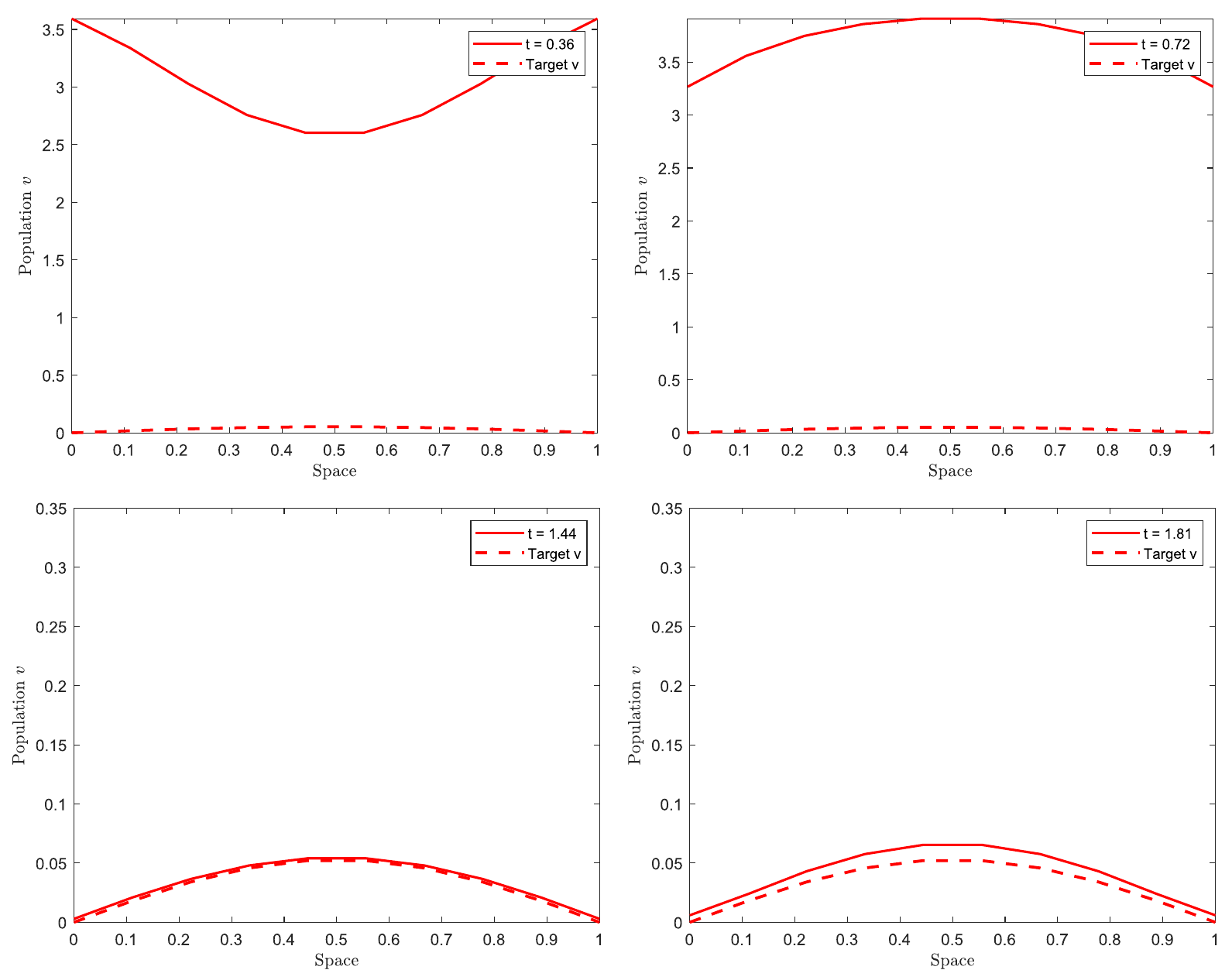}
   \captionsetup{font=small}
    \caption{Trajectories approaching $v^{**}$.}
    \label{vh4}
\end{figure}
The behavior of the controls $c_u$, $c_v$, and $h$ can be observed in Figures \ref{cucvh} and \ref{h}, respectively.

\begin{figure}[h!]
        \centering
         \includegraphics[width=0.8\linewidth]{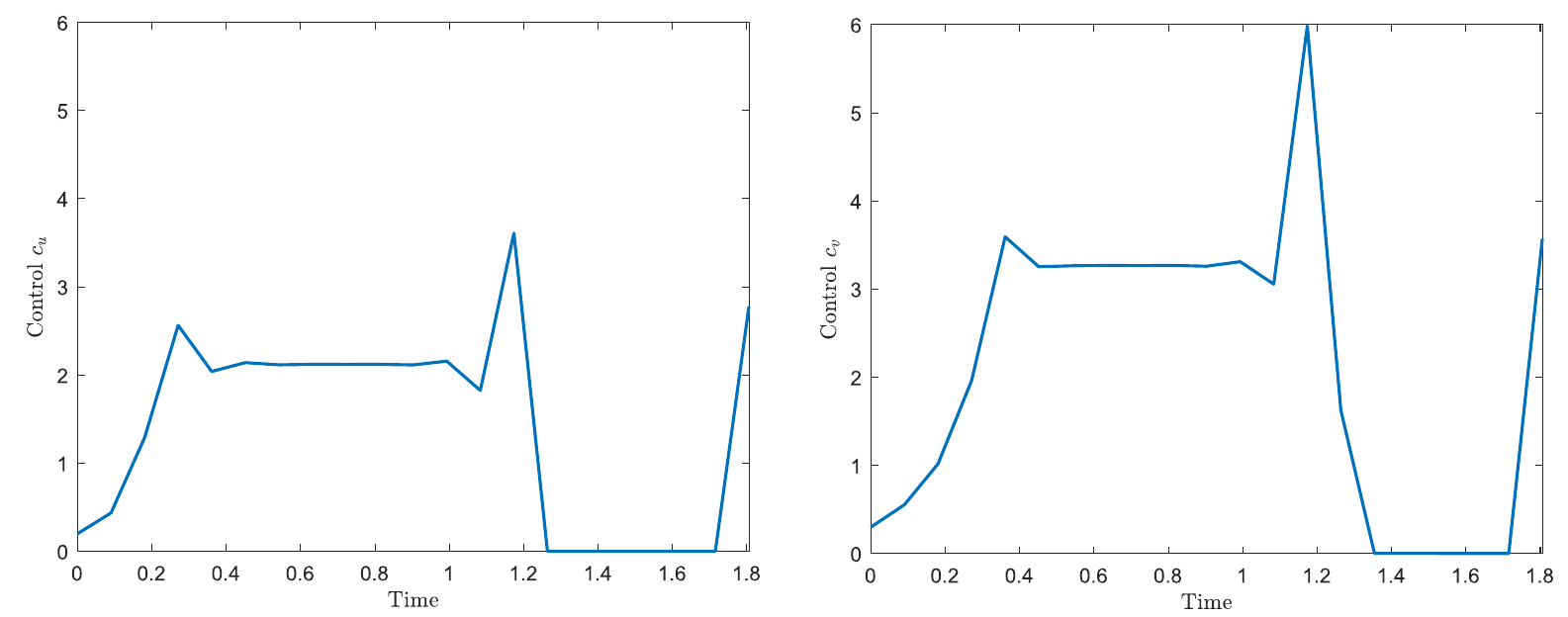}
   \captionsetup{font=small}
    \caption{Boundary controls $c_u$ and $c_v$.}
    \label{cucvh}
\end{figure}

\begin{figure}[h!]
        \centering
         \includegraphics[width=0.55\linewidth]{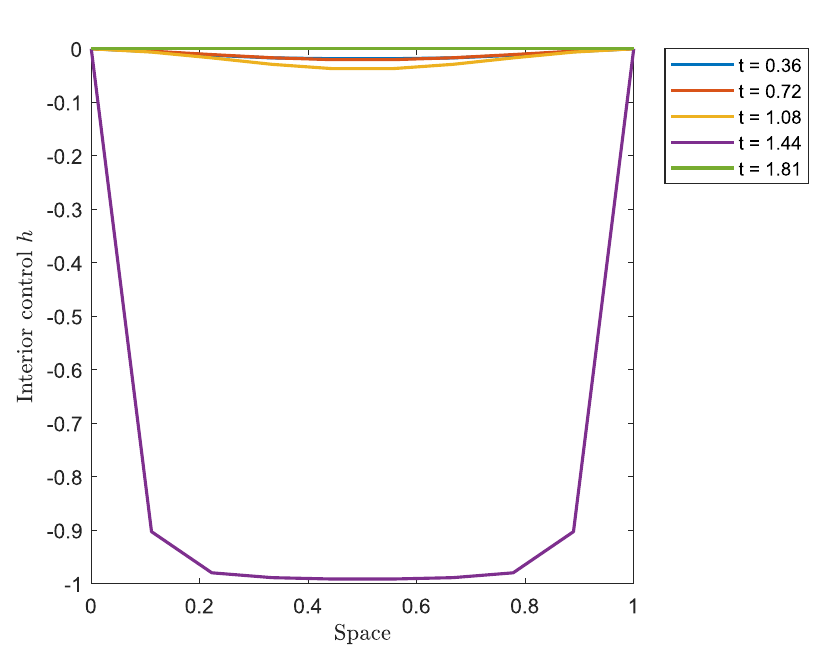}
   \captionsetup{font=small}
    \caption{Interior control $h$.}
    \label{h}
\end{figure}
In this case, we observe an interesting interplay between the boundary controls and the interior control in achieving the target in minimum time. Since the initial condition is above the desired target, \( h \) remains negative, reaching its lowest value at \( t = 1.44 \) before approaching zero as the trajectory nears the target. Meanwhile, \( c_u \) and \( c_v \) exhibit similar behavior on different scales, likely influenced by the initial conditions.

It is interesting to observe the initial behavior of the trajectories, which seems unrelated to the objective of approaching the target.
This reflects a dynamic rebalancing that the controls must achieve to drive the trajectory to a target that is positive in the interior of the domain and zero at the boundary.

The simulation of the same problem, but without considering the internal control $h$, results in a minimum time of $t=2.0083$. This shows that, in addition to allowing the target to be  reached in finite-time, the internal control accelerates the approach. The trajectories of $u$ and $v$, as well as the controls $c_u$ and $c_v$, can be seen in Figures \ref{uv} and \ref{cucv}, respectively.  

\begin{figure}[h!]
        \centering
         \includegraphics[width=1\linewidth]{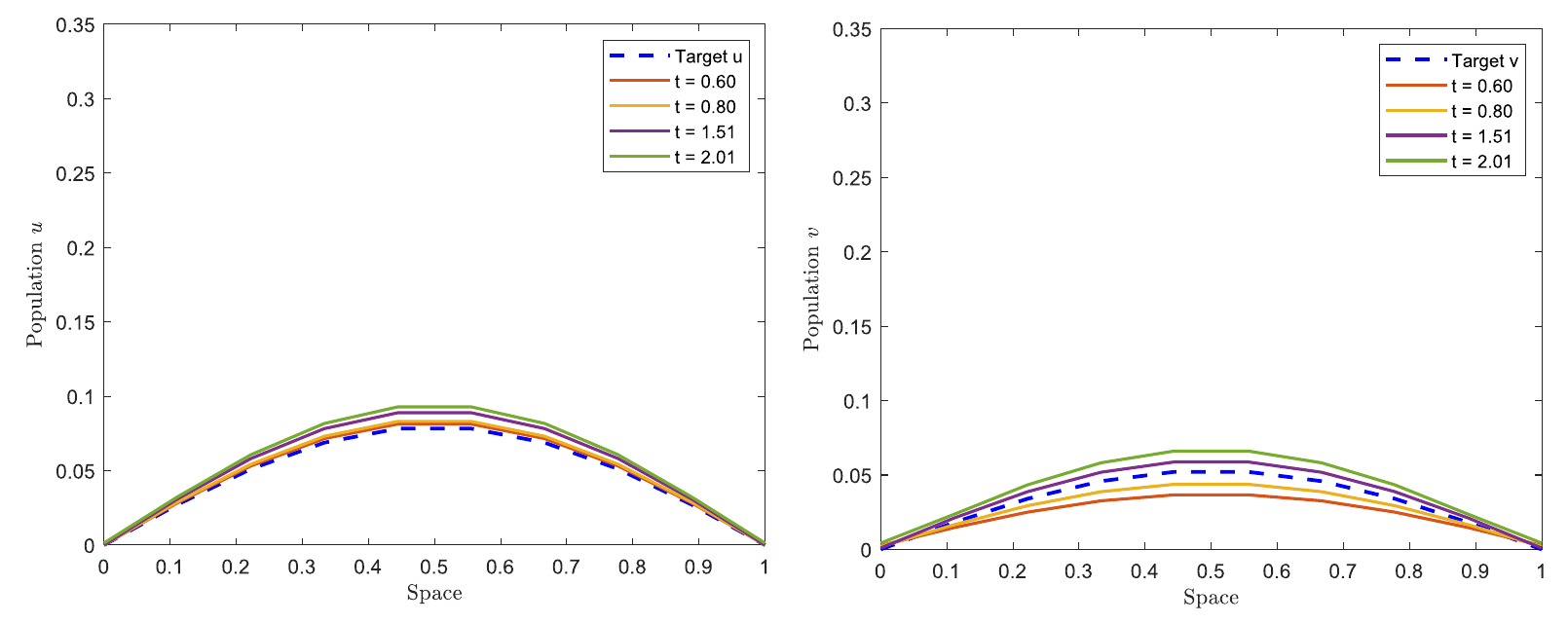}
   \captionsetup{font=small}
    \caption{Trajectories of $u$ and $v$ without interior control.}
    \label{uv}
\end{figure}

\begin{figure}[!h]
        \centering
         \includegraphics[width=1\linewidth]{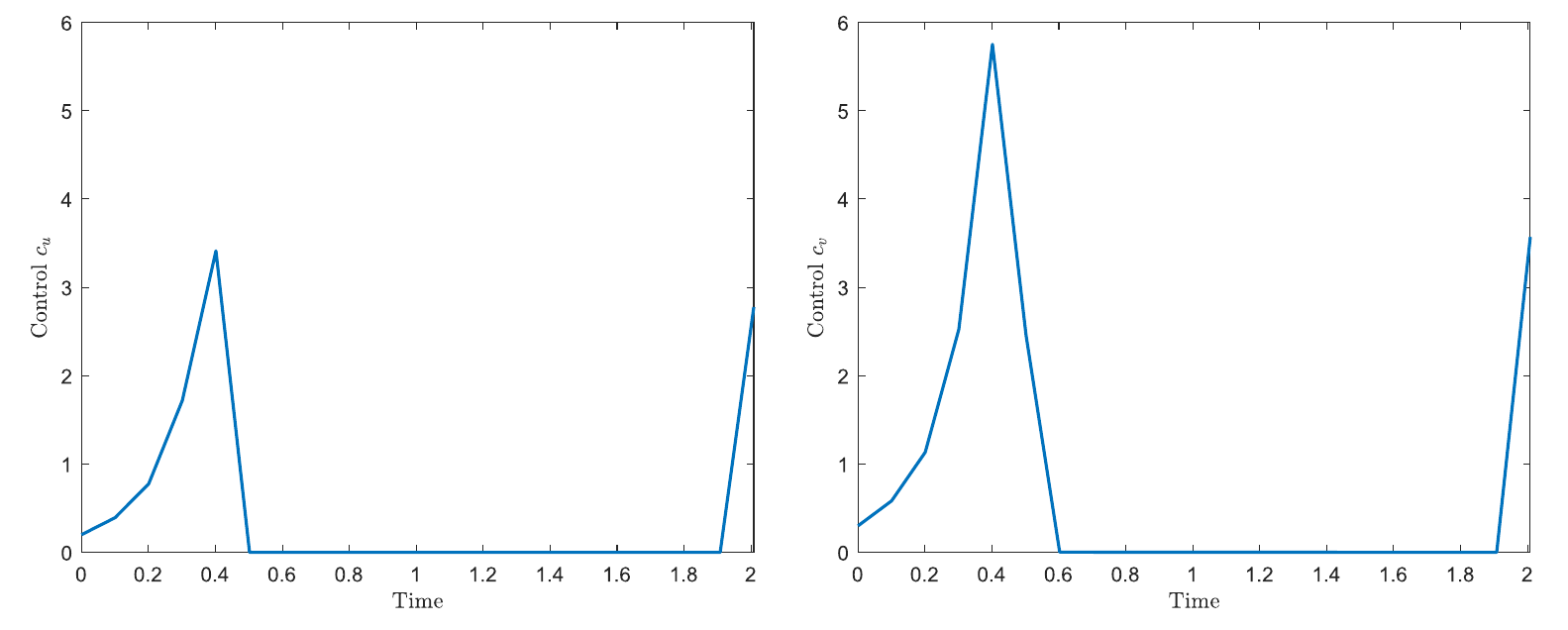}
   \captionsetup{font=small}
    \caption{Boundary controls $c_u$ and $c_v$ without interior control.}
    \label{cucv}
\end{figure}

In this case, we observe the trajectories and the controls on the boundary \( c_u \) and \( c_v \) in a more natural manner; however, as expected, with a minimum time greater than in the previous case.
\section{Concluding Remarks and Perspectives}

\subsection{Conclusions on our results} In this work, we analyze the global controllability of a general diffusive LVM describing the competition between two species in a smooth domain $\Omega \subset \mathbb{R}^N$ ($N=1,2,3$). The results, obtained by combining constrained boundary controls with a bounded multiplicative internal control, effectively support the biological interpretation of the model, confirming the relevance of the proposed control strategy.

Below, we provide a comprehensive analysis of the results for each considered target.
\begin{itemize}
\item {\it Survival of one of the species: $(a_1/b_1,0)$ and $(0,a_2/c_2)$}. This case is addressed in Theorem \ref{MT1}, and the asymptotic controllability is achieved through the combination of controls acting on the boundary and in the interior. More specifically, we provide boundary controls $c_u$ and $c_v$ satisfying \eqref{CUV}, along with a multiplicative interior control $hu$ such that for any initial condition $(u_0, v_0)$ ($0 \leq u_0 \leq a_1/b_1$ and $0 \leq v_0 \leq a_2/c_2$), the trajectory converges to the desired target as $t\to\infty$. This result holds independently of the problem parameters and the domain \( \Omega \). As mentioned earlier, achieving finite-time controllability for these targets is particularly challenging due to the considered constraints on controls and trajectories.
\item {\it  Heterogeneous coexistence : $(u^{**},v^{**})$}.
In this case, Theorem \ref{MT3} provides finite-time global  controllability; roughly speaking, the target is exactly reached at some time \( T \) from any initial condition satisfying the constraints. This case is particularly interesting since the target \( (u^{**}, v^{**}) \) is zero on the boundary, and thus finite-time controllability could require some oscillation of the trajectories, potentially leading to a violation of one of the constraints. However, the interior multiplicative control \( h1_{\omega}u \), combined with an appropriate control strategy, allows us to consider \( h \) sufficiently small so that comparison arguments can be used to ensure the assumed constraints. 

Without a doubt, Theorem \ref{MT3} is the main result of this work, as it provides a precise strategy to steer the trajectories toward a specific coexistence target. The inherent constraints of the problem make it more challenging to reach the target in finite-time, which was achieved only through the estimates obtained for the interior multiplicative control.

\item {\it Homogeneous coexistence and extinction: $(u^*,v^*)$ and $(0,0)$}. Although these cases are not the main focus of this work, a brief analysis is provided in Section \ref{HCE}. In particular, asymptotic controllability can always be achieved for the target \( (0,0) \) as long as multiplicative controls are used in both equations. Here, once again, finite-time controllability is more delicate due to the assumed constraints. Under certain conditions on the parameters, finite-time global  controllability for the target \( (u^*, v^*) \) can be achieved in a manner similar to what we did for the target \( (u^{**}, v^{**}) \). However, in this case, only boundary controls are sufficient (see \cite{SONEGOZUAZUA}).
\end{itemize}
\subsection{Other strategies for asymptotic control}\label{OCS}

It is possible to use other control strategies to asymptotically reach the targets $\left(a_1/b_1,0\right)$ and $\left(0,a_2/c_2\right)$. Below, we discuss boundary controls arising from the associated Neumann problem. 

Before, we state the following result, which can be found in \cite{Iida}.

\begin{theorem}\label{tpao}
    Let $(u,v)$ be the solution of 
    \begin{equation}\label{e1N}
\left\{\begin{array}{ll}
u_t=d_1\Delta u+u(a_1-b_1u-c_1v),& (x,t)\in\Omega\times\mathbb{R}^+\\
v_t=d_2\Delta v+v(a_2-b_2u-c_2v),& (x,t)\in\Omega\times\mathbb{R}^+\\
(u(x,0),v(x,0))=(u_0,v_0),& x\in\Omega\\
{\partial u}/{\partial\nu}=0,\ \ {\partial v}/{\partial\nu}=0& (x,t)\in\partial\Omega\times\mathbb{R}^+,
\end{array} \right.   
\end{equation}
where $\nu$ is the outward unit normal vector to $\partial\Omega$, $0\leq u_0\leq a_1/b_1$, $0\leq v_0\leq a_2/c_2$. Then $0\leq u(x,t)\leq a_1/b_1$, $0\leq v(x,t)\leq a_2/c_2$ for all $(x,t)\in\Omega\times\mathbb{R}^+$ and
\begin{enumerate}[(i)]
\item if ${a_1}/{a_2}<{b_1}/{b_2}$ and  ${a_1}/{a_2}<{c_1}/{c_2}$ then $\displaystyle\lim_{t\to\infty}(u(x,t),v(x,t))=(0,a_2/c_2)$ uniformly in $\Omega$;
\item if ${a_1}/{a_2}>{b_1}/{b_2}$ and  ${a_1}/{a_2}>{c_1}/{c_2}$ then $\displaystyle\lim_{t\to\infty}(u(x,t),v(x,t))=(a_1/b_1,0)$ uniformly in $\Omega$;
\item if ${c_1}/{c_2}<{a_1}/{a_2}<{b_1}/{b_2}$ then $\displaystyle\lim_{t\to\infty}(u(x,t),v(x,t))=(u^*,v^*)$ uniformly in $\Omega$.
\end{enumerate}
\end{theorem}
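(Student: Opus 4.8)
The plan is to prove Theorem \ref{tpao} by the method of coupled upper and lower solutions (a monotone iteration with \emph{constant} comparison functions), combined with the large-time asymptotics of the scalar diffusive logistic equation under Neumann conditions. This is the natural companion to the quasi-monotone comparison principle already used in Theorem \ref{cos}: the competition kinetics are quasi-monotone nonincreasing, so a pointwise upper bound for one species is produced from a lower bound for the other, and vice versa. First I would establish the invariant rectangle $0\le u\le a_1/b_1$, $0\le v\le a_2/c_2$ via the parabolic maximum principle — the constants $(a_1/b_1,a_2/c_2)$ and $(0,0)$ form a super/sub-solution pair in the competitive order — which simultaneously yields global existence, boundedness, and the stated a priori bounds.

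The core of the argument is a bootstrap. Inserting $u\le a_1/b_1$ into the $v$-equation gives $v_t\ge d_2\Delta v+v\,(a_2-b_2 a_1/b_1-c_2 v)$, and comparison with the scalar logistic equation (whose positive Neumann solutions converge uniformly to the carrying capacity) yields $\liminf_{t\to\infty}\inf_\Omega v\ge \underline v_1:=(a_2 b_1-a_1 b_2)/(b_1 c_2)$, which is positive precisely under the first hypothesis of case (i). Feeding this lower bound back into the $u$-equation and comparing again produces $\limsup_{t\to\infty}\sup_\Omega u\le \overline u_1:=\max\{0,(a_1-c_1\underline v_1)/b_1\}$. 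Iterating this alternation generates monotone sequences of constants $\overline u_n,\underline u_n,\overline v_n,\underline v_n$ (with $\overline u_n$ decreasing, $\underline v_n$ increasing, etc.), whose limits satisfy the algebraic fixed-point relations of the kinetic nullclines, i.e.\ the steady-state equations for the constant equilibria of the system. Uniform convergence of $(u,v)$ will follow once the upper and lower sequences are shown to collapse to a single point, since the sandwiching bounds are constants.

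The case distinction then reduces to reading off which fixed point the iteration selects. The hypotheses are exactly sign conditions on the coexistence components: $a_1/a_2<b_1/b_2\Leftrightarrow a_2b_1-a_1b_2>0$ (numerator of $v^{*}$ positive), $a_1/a_2<c_1/c_2\Leftrightarrow a_1c_2-a_2c_1<0$ (numerator of $u^{*}$ negative), and in case (iii) the two inequalities multiply to give the weak-competition condition $b_1c_2>b_2c_1$. In cases (i) and (ii) the coexistence point has a negative component, so the iteration admits no strictly positive fixed point; the decreasing sequence for the excluded species is therefore forced down to $0$, and the sandwich drives the surviving species to its carrying capacity. In case (iii) both sequences are trapped around the feasible, positive $(u^{*},v^{*})$, and under weak competition the iteration map is a contraction, so the upper and lower limits coincide with $(u^{*},v^{*})$.

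The hard part will be the convergence analysis of the iteration — showing the gap between the upper and lower sequences actually closes. In (i) and (ii) this amounts to ruling out a spurious positive fixed point, which is handled by the infeasibility (negative component) of coexistence under those sign conditions; in (iii) it requires establishing the contraction, and this is exactly where the weak-competition inequality encoded in (iii) is indispensable, since without it the system would be bistable and global convergence would genuinely fail. A secondary technical point, to be done carefully but routinely, is transferring the discrete constant bounds into rigorous $\liminf/\limsup$ statements for the PDE solution: at each stage one absorbs an $\varepsilon$-margin, invokes the uniform logistic asymptotics, and passes to the limit in $n$ using the monotonicity of the sequences.
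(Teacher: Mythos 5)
The paper does not prove Theorem \ref{tpao} at all: it is imported verbatim from \cite{Iida} (``we state the following result, which can be found in \cite{Iida}''), so there is no internal argument to compare yours against. Your proposal --- invariant rectangle, alternating monotone iteration with constant super/sub-solutions, and the large-time asymptotics of the scalar Neumann logistic equation --- is the standard ``contracting rectangles'' proof of precisely this kind of statement, and its overall architecture is sound; it is essentially the route taken in the literature the paper is citing.

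Two points in your sketch need to be made precise, and one touches a genuine hypothesis. First, the logistic comparison produces a \emph{positive} lower bound only when the corresponding initial datum is nontrivial: if $v_0\equiv 0$ then $v\equiv 0$ and conclusion (i) fails, so you must (as the source implicitly does) assume $v_0\not\equiv 0$ in (i), $u_0\not\equiv 0$ in (ii), and both in (iii); your phrase ``positive Neumann solutions'' hides this. Second, in cases (i)--(ii) the composed one-step map for the upper sequence is $\overline u\mapsto\max\{0,\alpha+\beta\,\overline u\}$ with $\alpha=(a_1c_2-a_2c_1)/(b_1c_2)<0$ and $\beta=c_1b_2/(b_1c_2)$, and this is \emph{not} a contraction when $b_2c_1>b_1c_2$ --- a strong-competition regime that hypothesis (i) does not exclude. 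In that regime the map has a positive fixed point, which is exactly $u^{*}$, so closing the gap requires checking that this fixed point lies outside the invariant interval $[0,a_1/b_1]$; a short computation shows $u^{*}\geq a_1/b_1$ is equivalent to $a_2b_1\geq a_1b_2$, which is precisely what (i) guarantees (strictly). Your appeal to ``infeasibility of coexistence'' is the right intuition, but this inequality is the actual content of the step and should be carried out explicitly; with it, the decreasing sequence $\overline u_n$ has no fixed point above $0$ inside the rectangle and collapses, after which $\underline v_n\uparrow a_2/c_2$, case (ii) follows by symmetry, and case (iii) follows from the contraction $\beta<1$ guaranteed by the weak-competition inequality $b_1c_2>b_2c_1$ that the two hypotheses of (iii) jointly imply.
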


For example, if we assume an interior control $h\equiv \sigma$ in \eqref{e1} such that
\begin{equation}\label{sig}-a_1<\sigma<\min\{a_2b_1/b_2-a_1,a_2c_1/c_2-a_1\}\end{equation}
 we have $(i)$ satisfied with $a_1 + \sigma$ instead of $a_1$.
Given any initial condition $(u_0,v_0)$, we can consider the respective solution $(\widetilde{u},\widetilde{v})$ of \eqref{e1N} with $a_1 + \sigma$ instead of $a_1$. In this case, we assume the following constraints boundary controls
$$c_u=\widetilde{u}|_{\partial\Omega}, \ \ c_v=\widetilde{v}|_{\partial\Omega}.$$
 It follows from item $(i)$ of Theorem \ref{tpao} that the solution $(u,v)$ of \eqref{e1} converges to $(0,a_2/c_2)$ as $t\to\infty$, uniformly in $\Omega$. Note that, unlike Theorem \ref{MT1}, the boundary controls now depend on the variables \( x \) and \( t \).
  Obviously, the same can be done with respect to the target \((a_1/b_1,0)\), but now assuming a control in the second equation as in \eqref{e2s}.

  We observe that this control strategy is independent of the geometry of the domain and the diffusion capacity of the species. However, the interior control depends on all the other parameters that represent the interaction between species $u$ and $v$: $a_i$, $b_i$, $c_i$, for $i=1,2$ (see \eqref{sig}).
In this case, it was necessary to use boundary controls derived from the associated Neumann problem.

\subsection{Potential developments}
Firstly, with respect to Theorem \ref{MT1}, and consequently to the targets $(0, {a_2}{/c_2})$ and $({a_1}/{b_1}, 0)$, an issue that requires further analysis concerns what must be done in order to achieve the asymptotic controllability result when the control is activated only in a subset $\widetilde{\omega} \subset \Omega$. This is an interesting topic for further study, and it is likely that some assumptions on the coefficients will be necessary.

Now, regarding the homogeneous coexistence case, the strategy outlined in Subsection \ref{sub_u_v} can be applied to our system, provided that the parameters satisfy condition $(iii)$ of Theorem \ref{tpao}. Hence, in this case, the activation of an internal control is not required to steer the system toward the target $(u^*, v^*)$. On the other hand, if the conditions in $(iii)$ of Theorem \ref{tpao} are not fulfilled, it remains possible that an internal control could be employed to ensure asymptotic stabilization. However, this result has not been established in the present work and will certainly be the subject of future research.

As discussed in Subsection \ref{sub_0_0}, analyzing the extinction state $(0,0)$ under the influence of controls in one or both equations also remains an open topic for future studies. Although the methodology employed in this work is not directly applicable to the target \((0,0)\), due to the decoupling that occurs during the linearization process around this point, we believe that, although delicate, a future study worth considering would be to investigate the case of additive control, that is, the action of \( h1_{\omega} \) distributed in one of the equations. A relevant observation to highlight refers to the methodology employed by Coron, Guerrero, and Rosier \cite{CORONGUERREROROSIER}. In this study, the authors investigate the local null controllability of a system consisting of two parabolic equations. The system in question includes a forced control term in one of the equations and a cubic coupling term in the other. The system is described as follows:
\begin{equation}\label{system_of_Coron}
\left\{\begin{array}{ll}{u}_t - \Delta {y} = g(u,v) +  h1_{\omega},& (x,t)\in Q\\
{v}_t - \Delta {v} = u^{3} + R v,& (x,t)\in Q\\
(u(x,0),v(x,0))=(u_0,v_0),& x\in\Omega\\
{u}(x,t)= 0,\ \ {v}(x,t)= 0,& (x,t)\in \Sigma,
\end{array} \right.   
\end{equation}
where \( g : \mathbb{R} \times \mathbb{R} \to \mathbb{R} \) is a given function of class \( C^\infty \) vanishing at \( (0, 0) \in \mathbb{R} \times \mathbb{R} \), and \( R \) is a given real number. The linearization of this system around zero also decoupled the system, just like in our target $(0,0)$, causing the control \( h \) to have no influence on \( v \), and if \( v(., 0) \neq 0 \), then \( v(.,T) \neq 0 \). Thus, the authors introduced the return method (see \cite{CORON1992} and \cite{CORON1996} for further details), a constructive approach that is highly sensitive and challenging to implement. This method involves selecting a trajectory \( ((\bar{u}, \bar{v}), \bar{h}) \) for the previous control system that satisfies the following conditions:
\begin{itemize}
   \item[(i)] it goes from \( (0, 0) \) to \( (0, 0) \), i.e., \( u(0, \cdot) = v(0, \cdot) = u(T, \cdot) = v(T, \cdot) = 0 \);
   \item[(ii)] the linearized control system around that trajectory is null controllable.
\end{itemize}
Using this trajectory and a suitable fixed point argument, the authors established that if there exists \( \delta > 0 \) such that for \( (u_0, v_0) \in L^{\infty}(\Omega) \times L^{\infty}(\Omega) \) with \( \| u_0 \|_{L^{\infty}(\Omega)} + \| v_0 \|_{L^{\infty}(\Omega)} < \delta \), then there exists a null control \( h \in L^{\infty}(\omega \times (0,T)) \) for the system \eqref{system_of_Coron}. However, Coron, Guerrero, and Rosier concluded that, due to the strong maximum principle, it is not possible to obtain a similar result if \( u^3 \) is replaced by \( u^2 \), unless functions with complex values are considered.

That being said, one of our future research interests would be to investigate whether it is possible to implement the ideas from \cite{CORONGUERREROROSIER} for our diffusive LVM in order to obtain new controllability results, including a positive outcome for the target \( (0, 0) \).

Considering the results presented,  new directions for research on system \eqref{e1} and/or \eqref{e1_2} naturally arise, particularly with modifications that could alter its dynamics. One such potential modification involves adapting the system to represent a mutualistic scenario. For instance, if in \eqref{e1}, we define \( a_1, a_2 > 0 \), \( b_1 > 0 \), \( b_2 < 0 \), \( c_1 < 0 \), and \( c_2 > 0 \), then both populations benefit mutually, meaning that \( u \) and \( v \) support each other's growth, resulting in a cooperative dynamic. The growth of one population enhances the growth of the other. 

Another widely discussed possibility is allowing all system parameters to depend on the spatial and/or temporal variables. Establishing global controllability results for this extended model is a relevant objective, as it would generalize our framework \cite{A1,ACD,HE,NI}.

\appendix
\renewcommand\theequation{\Alph{section}.\arabic{equation}}
\makeatletter
\@addtoreset{equation}{section}
\makeatother

\section{Proof of Proposition \ref{cnsl}}\label{Appendix}

\begin{proof}
Following the arguments of \cite{Clark}, for each \( n \geq 1 \), define the functions
\begin{equation*}
    \begin{array}{ccc}
&\beta_n := {\beta(T - t)} / {[(T - t) + {1}/{n}]},&  \gamma_n := {\gamma (T - t)} / {[(T-t) + {1}/{n}]},\\ 
&\rho_n := e^{s\beta_n},&  \rho_{0,n} := \rho_n \gamma_{n}^{-3/2}
 \end{array}
\end{equation*}
and 
\begin{equation*}
    \begin{array}{ll}
\bar{\rho}_{n} = e^{s\beta}\gamma^{-7/2}.\chi_{n},\quad \text{where}\quad \chi_{n}=\left\{\begin{array}{l}
 1 \,\,\, \text{in}\,\,\, \omega_0\\
 n\,\,\, \text{in}\,\,\,\Omega\setminus\overline{\omega}_0
\end{array}\right.
     \end{array}
\end{equation*}
and the functional $J_n:L^{2}(Q)\times L^{2}(Q) \times L^{2}(\omega_0\times (0,T))\rightarrow\mathbb{R}$, of the form
\begin{equation*}
    \begin{array}{l}
         J_{n}(y,z,\widetilde{h}):=\dfrac{1}{2}\displaystyle\iint_{Q}\left[\left(\rho^{2}_{0,n}|y|^{2} + \rho^{2}_{n}|z|^{2} \right) + \bar{\rho}^{2}_{n}|\widetilde{h}|^{2}\right]dxdt.
    \end{array}
\end{equation*}
The central idea is to address the following extremal problem:
\begin{equation*}
    \left\{\begin{array}{l}
        \text{Minimize}\quad J_{n}(y,z,\widetilde{h})\\
        \text{subject to}\quad \widetilde{h}\in L^{2}(\omega_0\times (0,T),\quad (y,z,\widetilde{h}) \quad \text{satisfies}\quad \eqref{linear_1}. 
    \end{array}\right.
\end{equation*}
It is proven that \( J_n \) is a lower semicontinuous, convex, and coercive functional. Therefore, it has a minimum $(y_{n},z_n,\widetilde{h}_{n})$. Moreover, based on Lagrange's Principle, there exist \((p_n, q_n)\) such that the pairs \((y_n, z_n)\), \((p_n, q_n)\), and \(\widetilde{h}_n\) satisfy:
\begin{equation}\label{linear_de_yn}
\left\{\begin{array}{ll}{y}_{n,t}=d\Delta {y}_n+{y}_n(a-2b_1u^{**}- c_1 v^{**}) - c_1u^{**} z_n  + {\widetilde{h}_n}1_{\omega_0} + F_0,& (x,t)\in Q\\
{z}_{n,t}=d\Delta {z}_n + {z}_n(a - b_2 u^{**} - 2c_2v^{**} ) - b_2v^{**} y_n + F_1,& (x,t)\in Q\\
(y_n(x,0),z_n(x,0))=(y_0,z_0),& x\in\Omega\\
{y}_n(x,t)= 0,\ \ {z}_n(x,t)= 0,& (x,t)\in\Sigma,
\end{array} \right.   
\end{equation}

\begin{equation}\label{adjoint_de_yn}
    \left\{\begin{array}{ll}
         -p_{n,t} = d\Delta p_n + p_n\left(a - 2b_1 u^{**} - c_1v^{**}\right)- b_{2}v^{**} q_n - \rho^{2}_{0,n}y_n, & (x,t)\in Q\\
         -q_{n,t} = d\Delta q_n + q_n\left( a - b_2 u^{**} - 2c_{2}v^{**} \right) - c_1 u^{**}p_n  - \rho^{2}_{n}z_n,  & (x,t)\in Q\\
         (p_n(x,T),q_n(x,T) )= (0,0),   & x\in \Omega\\
         p_n(x,t) = 0,\,\,\,  q_n(x,t) = 0, & (x,t)\in\Sigma,
    \end{array}\right.
\end{equation}
\begin{equation}\label{hn}
    p_{n} = \bar{\rho}^{2}_n\widetilde{h}_{n},\quad \text{in}\quad \omega_0\times (0,T). 
\end{equation}

Multiplying the first equation of \eqref{adjoint_de_yn} by \( y_n \) and the second by \( z_n \), integrating over \( Q \), and performing integration by parts, we obtain
\begin{equation}\label{igual_0}
    \begin{array}{l}
    \displaystyle\iint_{Q}(p_n,q_n)\cdot (\widetilde{h}_n1_{\omega_0} + F_0, F_1)dxdt + \displaystyle\iint_{Q}(\rho^2_{0,n}|y_n|^{2} + \rho^2_{n}|z_n|^{2} ) dxdt\\
         +\displaystyle\int_{\Omega}(p_{n}(x,0)y_0 + q_n(x,0)z_0)dx = 0.
    \end{array}
\end{equation}
From $J_n$ and \eqref{igual_0}, and considering \eqref{hn}, we find that
\begin{equation*}
    \begin{array}{l} J_n(y_n,z_n,\widetilde{h}_n) = -\dfrac{1}{2}\displaystyle\iint_{Q}(p_n F_0 + q_n F_1)dxdt -\dfrac{1}{2}\displaystyle\int_{\Omega}(p_n(x,0)y_0 + q_n(x,0)z_0)dx\\

    \leq  C\left[\Vert p_n(.,0)\Vert_{L^{2}(\Omega)} ^{2} + \Vert q_n(.,0)\Vert_{L^{2}(\Omega)}^{2} + \displaystyle\int_{Q} e^{-2s\beta}\gamma^{3}(|p_n|^{2} + |q_n|^{2})dxdt\right]^{1/2}\\
    \cdot\left[ \Vert y_0\Vert_{L^{2}(\Omega)}^{2} + \Vert z_0\Vert_{L^{2}(\Omega)}^{2} + \displaystyle\iint_{Q}e^{2s\beta}\gamma^{-3}(|F_0|^{2} + |F_1|^{2})dxdt \right]^{1/2}.
    \end{array}
\end{equation*}

Applying Proposition \ref{Carleman_inicial} to \( (p_n, q_n) \), the solution of \eqref{adjoint_de_yn}, and using the fact that \( \rho_n \leq e^{s\beta} \), \( \rho_{0,n} \leq e^{s\beta}\gamma^{-3/2} \), and the definition of \( \bar{\rho_n} \), we get
\begin{equation*}
    \begin{array}{ll}
\displaystyle\iint_{Q}e^{-2s\beta}\gamma^{3}(|p_n|^{2} + |q_n|^{2})dxdt \leq  \displaystyle\iint_{Q}(e^{-2s\beta}\gamma^{3}\rho^{4}_{0,n}|y_n|^{2} + e^{-2s\beta}\rho_{n}^{4}|z_n|^{2})dxdt \\
\quad  + \displaystyle\iint_{\omega_0\times(0,T)}e^{-2s\beta}\gamma^{7}|p_n|
^{2}dxdt    \\
\leq  C\left[\displaystyle\iint_{Q}(\rho^{2}_{0,n}|y_n|^{2} + \rho^{2}_{n}|z_n|^{2})dxdt + \displaystyle\iint_{\omega_{0}\times(0,T)}e^{2s\beta}\gamma^{-7}|\widetilde{h}_n|^{2}dxdt\right]\\
\leq 
 C J_{n}(y_n,z_n,\widetilde{h}_n).
\end{array}
\end{equation*}
Additionally, by applying the standard energy estimate in \eqref{adjoint_de_yn}, we obtain
\begin{equation*}
    \Vert p_n(.,0)\Vert_{L^{2}(\Omega)}^{2} + \Vert q_n(.,0)\Vert_{L^{2}(\Omega)}^{2}\leq C J_{n}(y_n,z_n,\widetilde{h}_{n}).
\end{equation*}
Therefore, from the two previous inequalities, we can conclude 
\begin{equation*}
    J_{n}(y_n,z_n,\widetilde{h}_{n})\leq C\left[ \Vert y_0\Vert_{L^{2}(\Omega)}^{2} + \Vert z_0\Vert_{L^{2}(\Omega)}^{2} + \displaystyle\iint_{Q}e^{2s\beta}\gamma^{-3}(|F_0|^{2} + |F_1|^{2})dxdt\right]
\end{equation*}
and consequently, one obtains that
\begin{equation}\label{estimativa_para_y_z_e_h}
    \begin{array}{l}
\displaystyle\iint_{Q}\left(\rho^{2}_{0,n}|y_n|^{2} + \rho^{2}_{n}|z_n|^{2}\right)dxdt + \displaystyle\iint_{\omega_0\times (0,T)}\bar{\rho}^{2}_{n}|\widetilde{h}_{n}|^{2}dxdt   \\
\leq C\left[ \Vert y_0\Vert^{2}_{L^{2}(\Omega)} + \Vert z_0\Vert^{2}_{L^{2}(\Omega)} + \displaystyle\iint_{Q}e^{2s\beta}\gamma^{-3}\left(|F_0|^{2} + |F_1|^{2}\right)dxdt\right]\\ 
       = C\kappa(y_0,z_0,F_0,F_1).  
    \end{array}
\end{equation}

Now, we will show that \( \widetilde{h}_n \in L^{\infty}(\omega_0\times (0,T)) \). Let \( \xi > 0 \) be sufficiently small, and let \( \left\{ \xi_j \right\}_{j=0}^{M} \) be a finite increasing sequence such that $0<\xi_{j}<\xi$, $j=0,1,\ldots, M-1, \xi_{M}=\xi$. Since $N\leq 3$, let also $\left\{ r_j \right\}_{j=0}^{M}$ another finite increasing sequence such that $r_0 = 2, r_{M}=\infty$ and, for $j=0,1,\ldots, M-1$,
\begin{equation*}
    -(N/2 + 1)(1/r_{j}-1/r_{j+1}) + 1 > 1/2.
\end{equation*}
Set 
\begin{equation*}
    \begin{array}{l}
         \alpha_{n}:= - \beta_{n} =  {(e^{\lambda\eta(x)}-e^{R\lambda})(T-t)} / \lbrace {m(t)[(T - t) + {1}/{n}]}\rbrace 
            \end{array}
\end{equation*}
and
\begin{equation*}
    \begin{array}{l}
     \alpha_{0n}:= {(1-e^{R\lambda})(T-t)} / \lbrace{m(t)[(T-t)+1/n]}\rbrace. 
    \end{array}
\end{equation*}
Then, 
\begin{equation}\label{desigualdade_de_alpha_n}
    \alpha_{0n}\leq \alpha_{n}\leq {\alpha_{0n}} / {(1+e^{-R\lambda})}<0.
\end{equation}
For each $j$, define 
\begin{equation*}
   \left\{ \begin{array}{ll}
         \phi_{j}(x,t) = e^{(s+\xi_{j})\alpha_{0n}}\,p_n(x,T-t); & \vartheta_{j}(x,t) = e^{(s+\xi_{j})\alpha_{0n}}\,q_n(x,T-t);\\
         g_{j}(x,t) = (e^{(s+\xi_{j})\alpha_{0n}})_{t}\,p_n(x,T-t);& k_{j}(x,t) = (e^{(s+\xi_{j})\alpha_{0n}})_{t}\,q_n(x,T-t);\\
         y_{j}(x,t) = e^{(s+\xi_{j})\alpha_{0n}}\rho_{0,n}^{2}\,y_{n}(x,T-t);& z_{j}(x,t) = e^{(s+\xi_{j})\alpha_{0n}}\rho_{n}^{2}\,z_n(x,T-t).
    \end{array}\right.
\end{equation*}
Thus, from equation \eqref{adjoint_de_yn}, we can conclude that, for each \( j \), the pair \( (\phi_j, \vartheta_j) \) is the solution of the following system
\begin{equation}\label{eq_de_phi_e_theta}
    \left\{\begin{array}{ll}
         \phi_{j,t} = d\Delta \phi_{j} + \phi_j\left(a - 2b_1 u^{**} - c_1v^{**}\right)- b_{2}v^{**} \vartheta_j - y_j + g_{j}, & (x,t)\in Q\\
         \vartheta_{j,t} = d\Delta \vartheta_{j} + \vartheta_{j}\left( a - b_2 u^{**} - 2c_{2}v^{**} \right) - c_1 u^{**}\phi_{j}  - z_j + k_{j},  & (x,t)\in Q\\
         (\phi_{j}(x,0),\vartheta_{j}(x,0) )= (0,0),   & x\in \Omega\\
         \phi_j(x,t) = 0,\,\,\,  \vartheta_j(x,t) = 0, & (x,t)\in\Sigma.
    \end{array}\right.
\end{equation}
Consider the semigroup $\{S(t); t \geq 0\}$ generated by the heat equation with Dirichlet boundary conditions. For $\psi \in L^p(\Omega)$, it holds that
$$
\Vert S(t)\psi\Vert_{L^{q}} \leq C \max \left\{ t^{-\frac{N}{2} \left( \frac{1}{p} - \frac{1}{q} \right) }, 1 \right\} \Vert\psi\Vert_{L^{p}},
$$
for all $0 < t < \infty$, with $1 \leq p < q \leq \infty$ (see, \cite{ARENDT,DAVIES}). Then,  invoking this inequality in the context of the solution to the system \eqref{eq_de_phi_e_theta}, and after performing the necessary calculations, we derive that 
\begin{equation}
    \Vert e^{(s+\xi)\alpha_{0n}}p_n\Vert^{2}_{L^{\infty}(Q)}\leq C \kappa(y_0,z_0,F_{0},F_1).
\end{equation}
Therefore, by \eqref{hn} and using the fact that $e^{s\beta}\geq \rho_{n}$ and $\alpha_n = -\beta_{n}$ we have that
\begin{equation*}
    \Vert e^{[-s(1-e^{-\lambda R})+\xi(1+e^{-\lambda R})]\alpha_{n}}\widetilde{h}_{n} \Vert^{2}_{L^{\infty}(\omega_0\times(0,T))}\leq C \kappa(y_0,z_0,F_{0},F_1). 
\end{equation*}
Consequently, by selecting $\xi$ to be sufficiently small, it follows that $$-s(1-e^{-\lambda R})+\xi(1+e^{-\lambda R})<0$$ and so
\begin{equation}\label{estimativa_para_hn}
    \Vert \widetilde{h}_{n}\Vert^{2}_{L^{\infty}(\omega_0\times (0,T))} \leq C \kappa(y_0,z_0,F_{0},F_1).
\end{equation}

By \eqref{estimativa_para_y_z_e_h} and \eqref{estimativa_para_hn} we can extract suitable subsequences (again indexed by $n$) such that, by the definitions of $\rho_n, \rho_{0,n}$ and $\bar{\rho}_n$ satisfy 
\begin{equation*}
   \left\{ \begin{array}{lll}
    \rho_{0,n}y_n\rightarrow e^{s\beta}\gamma^{-3/2} y & \text{weak in} & L^{2}(Q), \\
\rho_{n}z_n\rightarrow\ e^{s\beta} z & \text{weak in} & L^{2}(Q),\\
\bar{\rho}_{n}\widetilde{h}_n\rightarrow e^{s\beta}\gamma^{-7/2} \widetilde{h}& \text{weak in} & L^{2}(\omega_0\times(0,T)),\\
\widetilde{h}_{n}\rightarrow\widetilde{h} & \text{weak* in} & L^{\infty}(\omega_0\times(0,T)).
    \end{array}\right.
\end{equation*}
Hence, by taking the limit of the linear system \eqref{linear_de_yn}, we can deduce that $(y, z)$ represents the state of the system \eqref{linear_1} associated with $\widetilde{h}$, such that \eqref{regularidade_inicial_pesos_que_dependem_de_x_e_t} holds.

To get the equation \eqref{regularidade_a_mais_com_pesos_que_dependem_de_x_e_t}, one must multiply the first equation of \eqref{linear_1} by the factor \( e^{2s\beta} \gamma^{-5}y \), and the second equation by the factor \( e^{2s\beta} \gamma^{-5}z \), then proceed with the integration over \( \Omega \). So, after performing some calculations and analyzing the weights, the expression is obtained
\begin{equation*}
    \begin{array}{l}
         \dfrac{d}{dt}\displaystyle\int_{\Omega}e^{2s\beta}\gamma^{-5}  (|y|^{2} + |z|^{2})dx + \displaystyle\int_{\Omega}e^{2s\beta}\gamma^{-5} (d|\nabla y|^{2} + d|\nabla z|^{2})dx  \\
         \leq C\left(\displaystyle\int_{\Omega} e^{2s\beta}(\gamma^{-3}|y|^{2} + |z|^{2})dx + \displaystyle\int_{\omega_0}e^{2s\beta}\gamma^{-7}|\widetilde{h}|^{2}dx\right.\\
         \left.+ \displaystyle\int_{\Omega} e^{2s\beta}\gamma^{-3}(|F_0|^{2} + |F_1|^{2})dx\right).
    \end{array}
\end{equation*}
Thus, by integrating over time, the estimate \eqref{regularidade_a_mais_com_pesos_que_dependem_de_x_e_t} is achieved.
\end{proof}
\setcounter{equation}{0}
\section{Study of map $\mathcal{A}$}\label{Appendix_B}
This appendix is devoted to verifying that the mapping $\mathcal{A}:\mathcal{E}\rightarrow\mathcal{Z}$, see  beginning of Section \ref{Sec4}, defined by
\begin{equation*}
\begin{array}{c}
    \mathcal{A}(y,z,\widetilde{h}) 
    :=
    \Big( {y}_t - d\Delta {y} - {y}(a-b_1(2{u}^{**} + y) - c_1z - c_1 {v}^{**}) + c_1{u}^{**} z\\
    - \widetilde{h}1_{\omega_0}(\frac{y}{u^{**}}+1),
         {z}_t -d\Delta {z} - {z}(a-c_2(2{v}^{**} + z) - b_2{y}- b_2 {u}^{**}) + b_2{v}^{**} y,\\
         y(.,0),\, z(.,0) \Big),
    \end{array}
\end{equation*}
satisfies the assumptions of Theorem~\ref{Liusternik}. To this end, we first observe, by virtue of Proposition \ref{control_of_SL}, that
\begin{equation}\label{resultado_da_proposi_o_10}
    \begin{array}{l}
\displaystyle\iint_{Q}e^{2s{\hat{\beta}}}\left(\hat{{\gamma}}^{-3}\vert y\vert^{2} + \vert z \vert^{2}\right)dxdt +  \displaystyle\iint_{\omega_0\times (0,T)}e^{2s{\hat{\beta}}}{\hat{\gamma}}^{-7}\vert \widetilde{h}\vert^{2}dxdt\\
+ \,\displaystyle\sup_{[0,T]}\displaystyle\int_{\Omega}e^{2s{\hat{\beta}}}{\hat{\gamma}}^{-5} ( \vert  y\vert^{2} +  \vert  z\vert^{2}) dx \leq C\, \Vert (y,z,\widetilde{h})\Vert^{2}_{\mathcal{E}}.
    \end{array}
\end{equation}

Let us break down $\mathcal{A}$ as follows:
\begin{equation*}
\mathcal{A}(y,z,h):= \left( \mathcal{A}_1(y,z,h), \mathcal{A}_2(y,z,h), \mathcal{A}_3(y,z,h), \mathcal{A}_4(y,z,h)\right),
\end{equation*}
where
\begin{equation}\label{Mapas_Ai}
    \left\{\begin{array}{l}
         \mathcal{A}_1(y,z,\widetilde{h}) := {y}_t - d\Delta {y} - {y}(a-b_1(2{u}^{**} + y) - c_1z - c_1 {v}^{**}) + c_1{u}^{**} z \\
         - \, \widetilde{h}1_{\omega_0}(\frac{y}{u^{**}}+1);  \\
         \mathcal{A}_2(y,z,\widetilde{h}) :=  {z}_t -d\Delta {z} - {z}(a-c_2(2{v}^{**} + z) - b_2{y}- b_2 {u}^{**}) + b_2{v}^{**} y;  \\
         \mathcal{A}_3(y,z,\widetilde{h}) := y(.,0);  \\
         \mathcal{A}_4(y,z,\widetilde{h}) := z(.,0),
    \end{array}\right.
\end{equation}
for all $(y,z,h)\in \mathcal{E}$. Note that,
\begin{equation*}
    \begin{array}{l}
        \Vert\mathcal{A}_{1}(y,z,\widetilde{h})\Vert_{\mathcal{U}}^{2}\\
        =\displaystyle\iint_{Q}e^{2s\beta^{*}}(\gamma^{*})^{-3}\Big[\vert{y}_t - d\Delta {y} - {y}(a-b_1(2{u}^{**} + y) - c_1z - c_1 {v}^{**}) \\
        + c_1{u}^{**} z
        - \widetilde{h}1_{\omega_0}(\frac{y}{u^{**}}+1)\vert^{2}\Big]dxdt\\
         \leq \displaystyle\iint_{Q}e^{2s\beta^{*}}(\gamma^{*})^{-3}\vert {y}_t - d\Delta {y} - {y}(a-2b_1u^{**}- c_1 v^{**}) + c_1u^{**} z  - {\widetilde{h}}1_{\omega_0}\vert^{2}dxdt \\
         
         +  \displaystyle\iint_{Q}e^{2s\beta^{*}}(\gamma^{*})^{-3} c_1^{2}\vert y\vert^{2}\vert z\vert^{2}dxdt
         
      +   \displaystyle\iint_{Q}e^{2s\beta^{*}}(\gamma^{*})^{-3} b_1^{2}\vert y\vert^{4}dxdt\\
        
           +   \displaystyle\iint_{\omega_0\times (0,T)}e^{2s\beta^{*}}(\gamma^{*})^{-3}\dfrac{1}{|u^{**}|^{2}}|\widetilde{h}y|^{2}dxdt\\
        
        \leq  C\left( \displaystyle\iint_{Q}e^{2s\beta^{*}}(\gamma^{*})^{-3}\vert F_0\vert^{2}dxdt\right.   \\
        + \displaystyle\iint_{Q}e^{2s(\beta^{*}-2\hat{\beta})}(\gamma^{*})^{-3}\hat{\gamma}^{8} e^{2s\hat{\beta}}\hat{\gamma}^{-3}\vert y\vert^{2} e^{2s\hat{\beta}}\hat{\gamma}^{-5}\vert z\vert^{2}dxdt\\
      
    + \displaystyle\iint_{Q}e^{2s(\beta^{*}-2\hat{\beta})}(\gamma^{*})^{-3}\hat{\gamma}^{8} e^{2s\hat{\beta}}\hat{\gamma}^{-5}e^{2s\hat{\beta}}\hat{\gamma}^{-3} \vert y\vert^{4}dxdt \\

        \left. + \displaystyle\iint_{\omega_0\times (0,T)}e^{2s(\beta^{*}-2\hat{\beta})}(\gamma^{*})^{-3}\hat{\gamma}^{12} e^{2s\hat{\beta}}\hat{\gamma}^{-5}\vert y\vert^{2} e^{2s\hat{\beta}}\hat{\gamma}^{-7}\vert \widetilde{h}\vert^{2}dxdt \right).
\end{array}
\end{equation*}
Then,  by estimate \eqref{resultado_da_proposi_o_10} and the condition \eqref{desigualdade_dos_beta} we have 
\begin{equation}
 \begin{array}{l}\label{A1_bem_definido}
        \Vert\mathcal{A}_{1}(y,z,\widetilde{h})\Vert_{\mathcal{U}}^{2} \leq  C\left( \displaystyle\iint_{Q}e^{2s\beta^{*}}(\gamma^{*})^{-3}\vert F_0\vert^{2}dxdt\right.\\
        \left.+ \displaystyle\sup_{[0,T]}\displaystyle\int_{\Omega}e^{2s\hat{\beta}}\hat{\gamma}^{-5}(\vert y\vert^{2} + \vert z\vert^{2})dx\displaystyle\iint_{Q} e^{2s\hat{\beta}}\hat{\gamma}^{-3}\vert y\vert^{2} dxdt \right.\\

       \left.+\, \displaystyle\sup_{[0,T]}\displaystyle\int_{\Omega}e^{2s\hat{\beta}}\hat{\gamma}^{-5}\vert y\vert^{2}dx\displaystyle\iint_{\omega_0\times (0,T)} e^{2s\hat{\beta}}\hat{\gamma}^{-7}|\widetilde{h}|^{2}dxdt   \right)\\
        
        \leq C ( 1 + \Vert (y,z,\widetilde{h})\Vert_{\mathcal{E}}^{2} ) \Vert (y,z,\widetilde{h})\Vert_{\mathcal{E}}^{2},     
    \end{array}
\end{equation}
for any $(y,z,\widetilde{h})\in\mathcal{E}$. Consequently, \( \mathcal{A}_1 \) is well-defined. The analysis for \( \mathcal{A}_2 \) is done in a similar manner and for \( \mathcal{A}_3 \) and \( \mathcal{A}_4 \) it follows trivially. The continuity of \( \mathcal{A} \) follows in a similar manner.

Now, let us fix \((y,z,\widetilde{h}) \in \mathcal{E}\) and take \((y',z',\widetilde{h}') \in \mathcal{E}\), along with \(\sigma > 0\). According to the decomposition presented in equation \eqref{Mapas_Ai}, we define the linear mapping \(\mathcal{DA}: \mathcal{E} \to \mathcal{Z}\), with \(\mathcal{DA}(y,z,\widetilde{h}) = \mathcal{DA} = (\mathcal{DA}_1, \mathcal{DA}_2, \mathcal{DA}_3, \mathcal{DA}_4)\) where
\begin{equation}\label{DAi}
 \left\{ \begin{array}{lll}
 \mathcal{DA}_{1}(y',z',\widetilde{h}') := y'_{t}-d\Delta y' - y'[a - 2b_1(u^{**} + y) - c_1(v^{**} + z) ]  \\
 +\, c_1 z'(u^{**} + y) - \widetilde{h}1_{\omega_{0}}\frac{ y'}{u^{**}} - \widetilde{h}'1_{\omega_{0}}(\frac{y}{u^{**}} + 1);\\
 
  \mathcal{DA}_{2}(y',z',\widetilde{h}'):= z'_{t} - d\Delta z' - z'[a - b_2(u^{**} + y) - 2c_2(v^{**} + z)]  \\
  +\, b_2y'(v^{**} + z);\\
  
  \mathcal{DA}_{3}(y',z',\widetilde{h}'):= y'(.,0);\\

  \mathcal{DA}_{4}(y',z',\widetilde{h}'):=z'(.,0).
    \end{array}\right.
\end{equation}
From the definitions of the spaces \(\mathcal{E}\) and \(\mathcal{Z}\), along with equation \eqref{DAi}, it is evident that \(\mathcal{DA} \in \mathcal{L}(\mathcal{E}, \mathcal{Z})\). Moreover, for each \(j = \{1, 2, 3, 4\}\), using arguments analogous to those applied in the well-definition of \(\mathcal{A}_1\), we obtain the following convergence:

\begin{equation}\label{Converg_de_Ai}
\begin{array}{l}
    {1} / {\sigma} \left[\mathcal{A}_{j}\left((y,z,\widetilde{h}) + \sigma(y',z',\widetilde{h}')\right) - \mathcal{A}_{j}(y,z,\widetilde{h}) \right]\rightarrow \mathcal{DA}_{j}(y',z',\widetilde{h}')  \\
    \, \text{in their respective spaces} \, \text{as} \, \sigma \to 0,\,\, \text{for each}\,\, j=\lbrace 1, 2, 3, 4\rbrace.
\end{array}
\end{equation}
This leads us to conclude that \( \mathcal{A} \) is Gâteaux-differentiable at any \( (y, z, \widetilde{h}) \in \mathcal{E} \), with G-derivative \( \mathcal{A}'(y, z, \widetilde{h}) = \mathcal{DA}(y, z, \widetilde{h}) \). Additionally, it is shown through standard arguments that \( \mathcal{A}' \) is continuous, and consequently, \( \mathcal{A} \) is not only Gâteaux-differentiable but also Fréchet-differentiable and \( C^1 \).

In fact, take $(y,z,\widetilde{h})\in\mathcal{E}$ and let $((y_n,z_n,\widetilde{h}_n))_{n\in\mathbb{N}}$ be a sequence such that $\Vert(y_n,z_n,\widetilde{h}_n) -(y,z,\widetilde{h})\Vert_{\mathcal{E}}\rightarrow 0$. Let us prove that 
\begin{equation}\label{convergencia_da_derivada}
    \Vert (\mathcal{A}'(y_n,z_n,\widetilde{h}_n) - \mathcal{A}'(y,z,\widetilde{h}))(y', z', \widetilde{h}') \Vert_{\mathcal{Z}} \rightarrow 0.
\end{equation}
In order to simplify the notation, we will consider
\begin{equation*}
\mathcal{D}_{j,n}:=\mathcal{A}'_{j}(y_n,z_n,\widetilde{h}_n)-\mathcal{A}'_{j}(y,z,\widetilde{h}).
\end{equation*}
Thus, using the same arguments as \eqref{A1_bem_definido}, we conclude that
\begin{equation*}
\begin{array}{c}
  \Vert\mathcal{D}_{1,n}(y', z', \widetilde{h}')\Vert^{2}_{\mathcal{U}}
 \leq  C\Big(\Vert y'(y_n-y)\Vert^{2}_{\mathcal{U}} + \Vert y'(z_n - z)\Vert^{2}_{\mathcal{U}}\\
 + \Vert z'(y_n - y)\Vert^{2}_{\mathcal{U}} + \Vert y'(\widetilde{h}_{n}1_{\omega_{0}} - \widetilde{h}1_{\omega_0})\Vert^{2}_{\mathcal{U}} + \Vert \widetilde{h}1_{\omega_0}(y_n - y)\Vert^{2}_{\mathcal{U}}\Big)\\

\leq  C \Vert (y_n,z_n,\widetilde{h}_n) - (y,z,\widetilde{h})\Vert^{2}_{\mathcal{E}} \Vert (y,z,\widetilde{h})\Vert^{2}_{\mathcal{E}}  \Vert (y',z',\widetilde{h}')\Vert^{2}_{\mathcal{E}}\rightarrow 0.
 \end{array}
\end{equation*}
In an analogous way,
\begin{equation*}
\begin{array}{c}
 \Vert\mathcal{D}_{2,n}(y', z', \widetilde{h}')\Vert^{2}_{\mathcal{U}}
\leq  C\left(\Vert z'(y_n-y)\Vert^{2}_{\mathcal{U}} + \Vert z'(z_n - z)\Vert^{2}_{\mathcal{U}} + \Vert y'(z_n - z)\Vert^{2}_{\mathcal{U}}\right) \\

\leq  C \Vert (y_n,z_n,\widetilde{h}_n) - (y,z,\widetilde{h})\Vert^{2}_{\mathcal{E}} \Vert (y',z',\widetilde{h}')\Vert^{2}_{\mathcal{E}}\rightarrow 0.
 \end{array}
\end{equation*}
The analysis for \(\mathcal{D}_{3,n}\) and \(\mathcal{D}_{4,n}\) is straightforward, and thus \eqref{convergencia_da_derivada} holds.

To verify the surjectivity condition of $\mathcal{A}'(0,0,0)$, consider \( (F_0, F_1, y_0, z_0) \in \mathcal{Z} \). According to Proposition \ref{control_of_SL}, we know that there exists \( (y, z, \widetilde{h}) \) that satisfies equation \eqref{linear_1} and fulfills the estimates \eqref{regularity_for_y_and_z} and \eqref{regularidade_em_H1}. Therefore, as a result, $(y,z,\widetilde{h})\in \mathcal{E}$ and $$\mathcal{A}'(0,0,0)(y,z,\widetilde{h})=(F_{0},F_{1},y_0,z_0).$$ 

Based on the previous verifications we conclude that Theorem \ref{Liusternik} can be applied to our configuration \( \mathcal{A}:\mathcal{E}\rightarrow\mathcal{Z}\).

\section*{Acknowledgements}
J. C. Barreira was financed in part by the Coordenação de Aperfeiçoamento de Pessoal de Nível Superior - Brasil (CAPES) - Finance Code 001.

M. Sonego has been partially supported by the Conselho Nacional de Desenvolvimento
Científico e Tecnológico (CNPq), Grant/Award Number: 311893/2022-8; Fundaçãoo
de Amparo à Pesquisa do Estado de Minas Gerais (FAPEMIG), Grant/Award
Number: RED-00133-21 and CAPES/Humboldt Research Scholarship Program, Number: 88881.876233/2023-01.

E. Zuazua was funded by the European Research Council (ERC) under the European Union’s
Horizon 2030 research and innovation programme (Grant No. 101096251-CoDeFeL), the Alexander von Humboldt-Professorship program, the ModConFlex Marie Curie Action, HORIZONMSCA-2021-dN-01, the Transregio 154 Project of the DFG, grants PID2020-112617GB-C22 and
TED2021131390B-I00 of the AEI (Spain), AFOSR Proposal 24IOE027, and Madrid GovernmentUAM Agreement for the Excellence of the University Research Staff in the context of the V
PRICIT (Regional Programme of Research and Technological Innovation).

\end{document}